\theoremstyle{plain}
  \declaretheorem[numberwithin=section]{theorem}
  \declaretheorem[name=Theorem]{alphatheorem}
  \declaretheorem[sibling=theorem]{proposition}
  \declaretheorem[sibling=theorem]{lemma}
  \declaretheorem[sibling=theorem]{corollary}
  \declaretheorem[name=Corollary,numbered=no]{corollary*}
  \declaretheorem[numbered=no]{question}
  \declaretheorem[numbered=no]{convention}
\theoremstyle{definition}
  \declaretheorem[sibling=theorem]{definition}
  \declaretheorem[sibling=theorem]{example}
\theoremstyle{remark}
  \declaretheorem[numbered=no]{note}
\newcommand{\abs}[1]{\left\lvert #1 \right\rvert}
\newcommand{\vf}{\varphi}
\newcommand{\mb}{\mathbf}
\newcommand{\ms}{\mathscr}
\newcommand{\wt}{\widetilde}
\newcommand{\Xan}{X^{\rm an}}
\newcommand{\Proj}{\mathbf{P}}
\newcommand{\R}{\mathbf{R}}
\newcommand{\circled}[1]{
  \tikz[baseline=(char.base)]{\node[shape=circle,draw,inner sep=1pt] (char) {$#1$};}}
\DeclareMathOperator{\divisor}{div}
\DeclareMathOperator{\Div}{Div}
\DeclareMathOperator{\Prin}{Prin}
\DeclareMathOperator{\dist}{d}
\DeclareMathOperator{\Spec}{Spec}
\DeclareMathOperator{\Trop}{Trop}
\DeclareMathOperator{\trop}{trop}
\DeclareMathOperator{\an}{an}
\DeclareMathOperator{\Log}{Log}
\DeclareMathOperator{\TT}{\mathbf{T}}
\DeclareMathOperator{\G}{\mathbf{G}}
\DeclareMathOperator{\Z}{\mathbf{Z}}
\title[Construction of Fully Faithful Tropicalizations]
{Construction of Fully Faithful Tropicalizations for Curves in Ambient Dimension 3}
\author[T.~Gunn]{Trevor Gunn$^\dagger$}
\address{Trevor Gunn,
School of Mathematics,
Georgia Institute of Technology,
Atlanta,
USA}
\email{tgunn@gatech.edu}
\author[P.~Jell]{Philipp Jell}
\address{Philipp Jell,
Universit\"at Regensburg,
Regensburg, Germany}
\thanks{$^\dagger$Corresponding author.}
\thanks{The second author (PJ) was supported by the DFG Research Fellowship  JE 856/1-1.}
\begin{document}
\allowdisplaybreaks

\begin{abstract}
  In tropical geometry, one studies algebraic curves using combinatorial techniques
  via the tropicalization procedure.
  The tropicalization depends on a map to an algebraic torus
  and the combinatorial methods are most useful when the tropicalization
  has nice properties.
  We construct, for any Mumford curve $X$, a map to a three-dimensional torus,
  such that the tropicalization is isometric to a subgraph of the Berkovich space $\Xan$,
  called the extended skeleton. In this case, we say the tropicalization is ``fully faithful.''

  Additionally, given a map $X$ to a toric variety $Y$, which induces a fully
  faithful tropicalization, we show that we can extend the map to
  $X \to Y \times (\Proj^1)^n$ such that the new tropicalization is smooth and fully faithful.

  \bigskip

  \noindent
  MSC: Primary: 14T05; Secondary: 14G22, 32P05

  \bigskip

  \noindent
  Keywords: Tropical geometry, smooth tropical curves, Mumford curves, extended skeleta,
  faithful tropicalization
\end{abstract}
\maketitle

\section{Introduction}

Classically, it is well-known that while not every algebraic curve is a plane curve,
every curve is a space curve. That is, every curve admits a
closed embedding into $\Proj^3$ (see for instance \cite[Corollary~IV.3.6]{Har}).
Similarly, every graph has an embedding in $\R^3$.
In fact, this can be done with straight lines by putting the vertices as points on the twisted cubic.
Since no plane intersects the twisted cubic in $4$ points, no pair of chords on the twisted cubic can cross.

In this paper, we study the following question, which might be seen as a tropical
combination of these two facts.
\begin{question}
Let $X$ be a Mumford curve over a non-Archimedean field.
Does there exist a map of $X$ to a three-dimensional toric variety such
that the associated tropicalization is fully faithful?
\end{question}
We answer this question positively, with toric variety being $(\mathbf{P}^1)^3$.

\subsection{Fully and totally faithful tropicalizations}
Let us explain the analogy.
Let $Y$ be a toric variety and $X$ an algebraic curve.
Both $X$ and $Y$ have associated Berkovich spaces $\Xan$ and $Y^{\an}$.
The toric variety $Y$ has a canonical tropicalization $\Trop(Y)$
which is a partial compactification of $\R^{\dim Y}$
and comes with a non-constant map $\trop_Y \colon Y^{\an} \to \Trop(Y)$.
For a map from $\varphi \colon X \to Y$
we denote by $\Trop_{\varphi}(X)$ the image of the composition
$\trop_{\varphi} \coloneqq \trop_Y \circ \varphi^{\an} \colon \Xan \to \Trop(Y)$.
We call the space $\Trop_{\varphi}(\Xan)$ an \emph{embedded tropical curve}.
It is canonically equipped with the
structure of a metric graph (potentially with edges of infinite length).

Also associated with $\varphi$ is another metric graph with potentially infinite edges:
the so-called completed extended skeleton $\Sigma = \Sigma(\vf)$,
which is a metric subgraph of $\Xan$.
It was shown by Baker, Payne and Rabinoff \cite{BPR16} that
$\Trop(X) = \trop_\varphi (\Sigma)$
and that $\trop_{\varphi}\!|_{\Sigma} \colon \Sigma \to \Trop(X)$ is a piecewise-linear,
integral affine map of metric graphs.
The tropicalization is called \emph{fully faithful}
if this map is an isometry.
In particular, a fully faithful tropicalization admits a section $\Trop_{\varphi}(X) \to \Xan$.
We can slightly relax those conditions:
A tropicalization is called \emph{totally faithful} if the map is an isometry
when removing the vertices of $\Sigma$ that are infinitely far away.

We prove the following theorem (Theorem~\ref{thm fully faithful}) and
a corollary (Theorem~\ref{thm totally faithful}) that is proved along the way.

\begin{alphatheorem} \label{ThmA}
Let $X$ be a smooth projective Mumford curve.
Then there exist three rational functions $f_1, f_2, f_3$ on $X$ such that
the tropicalization associated to the map $X \to (\mathbf{P}^1)^3, x \mapsto (f_1,f_2,f_3)$
is fully faithful.
\end{alphatheorem}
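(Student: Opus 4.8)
The plan is to build the three rational functions $f_1, f_2, f_3$ explicitly from the combinatorial structure of the skeleton $\Sigma$ of the Mumford curve $X$. Since $X$ is Mumford, $\Sigma$ is a finite metric graph (after trimming infinite ends we will have to add back), and $\Xan$ retracts onto $\Sigma$. The key point is that a tropicalization $\trop_\vf$ is fully faithful precisely when $\trop_\vf|_\Sigma$ is injective and, on each edge, a unimodular (slope $\pm 1$ in the appropriate direction) embedding; equivalently, the three piecewise-linear functions $\trop \circ f_i|_\Sigma$ must jointly separate points of $\Sigma$ and have, at each point and in each tangent direction, at least one coordinate with outgoing slope $\pm 1$. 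So the proof reduces to a purely combinatorial construction on the metric graph $\Sigma$, followed by an algebraization step realizing the chosen piecewise-linear data as genuine rational functions on $X$.

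First I would recall (from the Baker--Payne--Rabinoff framework cited above) the dictionary between rational functions $f$ on $X$ and their tropicalizations: $\trop(f)|_\Sigma = \log|f|$ is a piecewise-linear function on $\Sigma$ whose slopes along edges are integers, and whose behavior is controlled by the divisor of $f$ on the special fiber / the reduction. Crucially, on a Mumford curve every such piecewise-linear function with integer slopes and the right harmonicity at vertices that are ``type-2 with genus zero reduction'' is realized by an actual rational function, because the obstruction lives in a group that vanishes (the components of the reduction are $\Proj^1$'s). This is the algebraization input I would isolate as a lemma: any choice of integer-slope PL function on $\Sigma$ that is harmonic at all vertices lifts to a rational function on $X$ whose tropicalization on $\Sigma$ is exactly that PL function, possibly up to adding loops/ends where slopes are allowed to be nonzero.

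Next, the combinatorial heart: I need to choose three harmonic integer-slope PL functions $\psi_1, \psi_2, \psi_3$ on $\Sigma$ such that the map $\psi = (\psi_1,\psi_2,\psi_3)\colon \Sigma \to \R^3$ is an isometric embedding in the metric-graph sense. This is the tropical analogue of embedding the curve in $\Proj^3$, and the twisted-cubic heuristic in the introduction is the guide: I would order/place the vertices of $\Sigma$ cleverly (e.g. along a ``tropical moment curve'') so that the three coordinate functions separate all vertices, and choose slopes on edges so that along every edge exactly one coordinate has slope $\pm1$ and the others have slopes that don't destroy injectivity. For each edge $e$ one wants $\psi|_e$ to be an isometry onto its image, which forces $\max_i |{\rm slope}_i| = 1$ on $e$; harmonicity at each vertex $v$ then imposes, for each $i$, that the incoming slopes of $\psi_i$ sum to zero. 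The bounded valence of $\Sigma$ and the freedom to subdivide edges (adding type-2 points, which does not change $\Xan$) give enough room; one likely introduces auxiliary ``bookkeeping'' slopes on a spanning tree versus the extra loop edges, handling the genus-$g$ cycles by a rank argument. I would also need to attach infinite ends appropriately so that the completed extended skeleton $\Sigma(\vf)$ — which includes the preimages of the toric boundary of $(\Proj^1)^3$ — agrees with the abstract $\Sigma$ plus finitely many infinite rays, and check the isometry there (or settle for totally faithful and then upgrade).

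The main obstacle I expect is the simultaneous constraint of (i) harmonicity at every vertex for all three functions, (ii) the unimodularity condition $\max_i|{\rm slope}_i(e)| = 1$ on every edge, and (iii) global injectivity of $\psi$ on $\Sigma$ — all with only \emph{three} coordinates, independent of the genus. For graphs of high genus and high valence this is genuinely delicate: generic choices fail injectivity (different edges map onto overlapping segments of $\R^3$), and fixing injectivity by perturbing slopes can break unimodularity or harmonicity. I anticipate the resolution is an explicit inductive/greedy construction: process the edges of a spanning tree to pin down $\psi$ on the tree as an isometric embedding into $\R^3$ (this is exactly embedding a tree, the easy case), then add the $g$ extra edges one at a time, each time choosing its slope vector to be primitive of the right length and in ``general position'' relative to the already-embedded part — using that $\R^3$ is big enough to avoid the finitely many bad directions — while correcting harmonicity by rerouting the compensating slopes along the tree. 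Verifying that this correction does not cascade into a unimodularity violation somewhere is the technical crux, and is presumably where the dimension bound $3$ (rather than $2$) is genuinely used.
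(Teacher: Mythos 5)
Your high-level strategy --- choose three integer-slope piecewise-linear functions on the skeleton that jointly give a unimodular embedding, then algebraize --- is indeed the paper's strategy, but the algebraization lemma you isolate is both misstated and too strong, and that is exactly where the paper's work lies. A nonconstant PL function on a compact metric graph cannot be harmonic at every point (maximum principle), so your requirement that the $\psi_i$ be ``harmonic at all vertices'' is inconsistent with nonconstancy: each $F_i$ must have a nontrivial divisor, and the zeros and poles of the lifted rational function $f_i$ become infinite rays of the completed extended skeleton $\Sigma(\varphi)$. Consequently $\Sigma(\varphi)$ depends on the functions and cannot be fixed in advance --- this is precisely the point the paper makes when contrasting its situation with Baker--Rabinoff. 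Moreover, the correct lifting statement (Theorem~\ref{lifting thm}, from \cite{J}) does \emph{not} let you prescribe where all the divisor points of $f_i$ sit: you may prescribe all but $g$ of the lifts, and the $g$ free ones must form a break divisor supported on $2$-valent points. The divisors $D_1,D_2,D_3$ in Section~\ref{sec:construction} are engineered around exactly this constraint, and it is essential that the lifts produced for $D_3$ and $D_1$ can be fed back in as prescribed lifts for $D_2$; otherwise one gets several distinct rays over the same skeleton point and injectivity at infinity fails.

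The second gap is that you only address injectivity on the finite skeleton. Full faithfulness also requires that the images of all the infinite rays (one for each of the six marked points per edge, plus one per vertex) be pairwise disjoint in $\R^3$ \emph{and} have distinct limit points in the toric boundary of $(\mathbf{TP}^1)^3$. Most of Section~\ref{sec:injective} is a case analysis of these rays, and the comparison between $\Proj^3$ and $(\Proj^1)^3$ in Section~\ref{sec:compactifications} shows that the choice of compactification genuinely matters: the same three functions give a tropicalization that is \emph{not} fully faithful into $\Proj^3$. Without a correctly stated lifting theorem and an analysis of the rays and their limits at infinity, the argument does not close.
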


\begin{corollary*}
Let $Y$ be a proper toric variety of dimension three.
Then there exists a morphism $\varphi \colon X \to Y$
such that the induced tropicalization is totally faithful.
\end{corollary*}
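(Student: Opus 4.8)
\emph{Proof idea.} The plan is to deduce the corollary from Theorem~\ref{ThmA} by routing the map $X\to(\Proj^1)^3$ into $Y$ through the common dense torus. Let $\TT\cong\G_m^3$ be the dense torus of $Y$. Since $(\Proj^1)^3$ and $Y$ are both proper toric varieties with this torus, $\Trop((\Proj^1)^3)$ and $\Trop(Y)$ are two partial compactifications of the same $\R^3 = \Trop(\TT)$, and $\trop_{(\Proj^1)^3}$ and $\trop_Y$ both restrict on $\TT^{\an}$ to the canonical torus tropicalization $\trop_\TT\colon\TT^{\an}\to\R^3$. Fix the rational functions $f_1,f_2,f_3$ produced by Theorem~\ref{ThmA}; they are necessarily nonzero (otherwise $\vf_0$ would land in the toric boundary), so $(f_1,f_2,f_3)$ is a point of $\TT$ over the function field $K(X)$ and defines a morphism $\psi\colon U\to\TT$ on the complement $U$ of the finite set $D\subset X$ of zeros and poles of the $f_i$. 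Composing with $\TT\hookrightarrow Y$ and using that a morphism from a dense open of the smooth projective curve $X$ to the proper variety $Y$ extends (the local rings of $X$ are discrete valuation rings, so the valuative criterion applies), $\psi$ extends uniquely to a morphism $\vf\colon X\to Y$; over $U$ it agrees with the torus map underlying the map $\vf_0\colon X\to(\Proj^1)^3$ of Theorem~\ref{ThmA}.

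Next I would identify the completed extended skeleta of $\vf$ and $\vf_0$. Because $Y$ is proper, its defining fan is complete; hence for each $x\in D$ the nonzero vector $v_x := (\mathrm{ord}_x f_1,\mathrm{ord}_x f_2,\mathrm{ord}_x f_3)\in\Z^3$ lies in the relative interior of some cone of the fan, and the extension $\vf$ therefore sends $x$ into the corresponding boundary orbit of $Y$. Since $\vf(U)\subseteq\TT$, this gives $\vf^{-1}(Y\setminus\TT) = D$; the same holds for $\vf_0$ (whose fan is also complete), so $\vf$ and $\vf_0$ have the same set $D$ of boundary points. As the completed extended skeleton depends only on this set, $\Sigma(\vf) = \Sigma(\vf_0) =: \Sigma$ as metric subgraphs of $\Xan$, and $D$ is the set of infinitely-far vertices of $\Sigma$ in both cases.

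Finally I would compare the two tropicalizations on $\Sigma$. For $x\in\Sigma\setminus D$ one has $\vf(x)=\vf_0(x)\in\TT^{\an}$, and since $\trop_Y$ and $\trop_{(\Proj^1)^3}$ both restrict to $\trop_\TT$ there, the maps $\trop_\vf$ and $\trop_{\vf_0}$ agree on $\Sigma\setminus D$ and take values in $\R^3$. By Theorem~\ref{ThmA}, $\trop_{\vf_0}|_\Sigma\colon\Sigma\to\Trop_{\vf_0}(X)$ is an isometry, so its restriction to $\Sigma\setminus D$ is an isometry onto its image, and that restriction is $\trop_\vf|_{\Sigma\setminus D}$. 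Together with $\trop_\vf(\Sigma)=\Trop_\vf(X)$ \cite{BPR16} and the observation that $D$ maps into the boundary of $\Trop(Y)$, this says exactly that $\trop_\vf|_\Sigma$ becomes an isometry after deleting the infinitely-far vertices of $\Sigma$; that is, the tropicalization induced by $\vf$ is totally faithful.

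I expect the main obstacle to be the identification in the second step: one must verify that the abstract extension $\vf$ really sends every $x\in D$ into $\partial Y$ and introduces no new boundary points, so that $\Sigma$ and its set of infinite vertices genuinely do not change. This is precisely the point at which completeness of the fan of $Y$ is used, and it is where I would be most careful — in particular if one additionally wished $\vf$ to be a closed immersion, which is, however, not needed for total faithfulness.
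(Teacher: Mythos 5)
Your argument is correct, but it runs in the opposite direction from the paper's. The paper proves this statement first (as Theorem~\ref{thm totally faithful}) and directly, without reference to $(\Proj^1)^3$: it extends the rational map $(f_1,f_2,f_3)\colon X \dashrightarrow \G_m^3$ to a morphism $X\to Y$ by properness and then invokes Proposition~\ref{prop choosing parameters} (injectivity of $\trop_\varphi$ on the open skeleton $\mathring\Sigma(\varphi)$) together with Proposition~\ref{prop inj implies faithful} (injectivity plus the coprimality condition \ref{S:coprime} forces all stretching factors to be $1$); full faithfulness for $(\Proj^1)^3$ (Theorem~\ref{thm fully faithful}) is only established afterwards by additionally checking, via Table~\ref{tab:rays} and the distinctness requirements, that the rays stay separated at infinity. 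You instead take Theorem~\ref{ThmA} as given and transport total faithfulness across the change of compactification of the common torus. That requires exactly the verifications you supply: that for a complete fan $\varphi^{-1}(Y\setminus T)$ is the set $D$ of zeros and poles of the $f_i$ (since $v_x\neq 0$ exactly for $x\in D$), that the completed extended skeleton therefore agrees with that of the $(\Proj^1)^3$-embedding, and that both tropicalizations restrict to the same map into $N_\R$ on $\mathring\Sigma$. Both routes are valid; yours is the cleaner formal deduction matching the word ``corollary'' in the introduction and isolates precisely which part of the data is compactification-independent, while the paper's is logically prior, since its proof of Theorem~\ref{ThmA} reuses the same total-faithfulness machinery rather than the corollary itself. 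The only point worth flagging is that your proof is non-circular only because Theorem~\ref{ThmA} is established independently of this statement, which is indeed the case here.
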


Our construction starts with three piecewise-linear functions on a skeleton of $\Xan$
that were chosen to have the correct combinatorial properties and
then tweaked so that we could lift those piecewise-linear functions to rational functions on $X$.
The choice of these piecewise linear functions
was inspired by Baker's and Rabinoff's construction \cite[Section 8]{BR}.
Here, Baker and Rabinoff construct a faithful tropicalization for any curve in ambient dimension 3.
Since they only consider faithful tropicalizations, they get to fix a skeleton beforehand
(as opposed to a complete extended skeleton) and then construct an embedding that maps
that skeleton isometrically onto its image (as opposed to our situation,
where the completed extended skeleton depends on the embedding).
This means that Baker and Rabinoff get much more freedom when picking their functions and
only require a weaker lifting theorem.

Our main tool is a lifting theorem (Theorem~\ref{lifting thm}) of the second author \cite{J}, that allows
us to lift tropical meromorphic functions on a skeleton
to the algebraic curve $X$.
This theorem refines another lifting theorem of Baker and Rabinoff \cite{BR}.

Similar questions to ours have been considered.
For example in the works of Cartwright, Dudzik, Manjunath, and Yao \cite{CDMY} and
Cheung, Fantini, Park, and Ulirsch \cite{CFPU}.
However, these results are a bit different in spirit, as the authors start
with a given skeleton and then make a construction that works for \emph{some} algebraic curve
with that skeleton.
We also only care about the skeleton of the curve in our construction,
but the map we construct works for \emph{every} curve with that skeleton.

While the main body of our text deals with general Mumford curves,
i.e.\ we do not use any additional properties, our
main technique of lifting tropical meromorphic functions
can also be used to construct nice tropicalizations
for all curves with a given explicit skeleton.
We exhibit this in Section~\ref{sec:genus2} for a special skeleton of genus $2$.

\subsection{Smooth tropicalizations}
We consider another property of tropicalizations: smoothness.
Roughly speaking, an embedded tropical curve is smooth if locally, at every vertex,
the tropical curve looks like the 1-dimensional fan in $\R^k$ whose rays are $e_1,\dots,e_k, -\sum e_i$.

We define in Definition~\ref{def:local-degree-of-non-smoothness} an invariant of an embedded
tropical curve that measures how singular that tropical curve is.
We prove the following resolution of singularities result (Corollary~\ref{Cor resolution of singularities}) by showing
that we can inductively lower this invariant via re-embedding.

\begin{alphatheorem} \label{ThmB}
Let $X$ be a Mumford curve
and $\varphi \colon X \to Y$ a map
that induces a fully faithful tropicalization of $X$.
Then there exist functions $f_1,\dots,f_n$ on $X$
such that $\varphi' \coloneqq \varphi \times (f_1,\dots,f_n) \colon X \to Y \times (\Proj^1)^n$
induces a fully faithful tropicalization of $X$ and such that $\Trop_{\varphi'}(X)$ is
a smooth tropical curve.
\end{alphatheorem}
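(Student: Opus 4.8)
The plan is to proceed by induction on the invariant measuring non-smoothness of $\Trop_\varphi(X)$. Since $\varphi$ induces a fully faithful tropicalization, $\trop_\varphi$ restricts to an isometry $\Sigma(\varphi) \to \Trop_\varphi(X)$, so the embedded tropical curve $\Trop_\varphi(X)$ is (after forgetting infinite edges) just a metric model of $\Sigma$. Smoothness is a local condition at each vertex $v$ of the embedded curve: the star of $v$ should be, up to unimodular change of coordinates on the lattice $\Z^{\dim Y}$, the standard $1$-dimensional fan with rays $e_1,\dots,e_k,-\sum e_i$, where $k$ is the valence of $v$. The total degree of non-smoothness (Definition~\ref{def:local-degree-of-non-smoothness}) is a finite sum over vertices, so it suffices to strictly decrease the local contribution at one chosen non-smooth vertex $v_0$ while not introducing any new non-smoothness elsewhere and preserving full faithfulness.

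First I would fix a non-smooth vertex $v_0$ and analyze its star. The primitive integral slopes $u_1,\dots,u_k \in \Z^{\dim Y}$ along the edges at $v_0$ satisfy the balancing condition $\sum m_i u_i = 0$ for the edge multiplicities $m_i$; after tropical modifications are ruled out (the curve is already faithful) the obstruction to smoothness is that this configuration is not unimodularly equivalent to the standard one. The idea, following the strategy behind Baker–Rabinoff's construction and using the lifting theorem (Theorem~\ref{lifting thm}), is to choose additional coordinates: one builds a piecewise-linear function $\psi$ on $\Sigma$ whose slopes at $v_0$ are chosen so that the combined slope vectors $(u_i, \text{slope}_i(\psi)) \in \Z^{\dim Y + 1}$ have a star at $v_0$ that is "closer" to standard — concretely, so that the sublattice they generate is larger, or the index measuring the defect drops. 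Away from $v_0$ one takes $\psi$ to be linear (hence harmonic) on each edge with integer slopes, and at every other vertex one chooses the slopes of $\psi$ so as not to destroy smoothness there — this is possible because adding a coordinate with generic-enough integer slopes at an already-smooth vertex keeps it smooth (a standard star remains standard after appending a coordinate whose slopes are, say, all distinct). Then Theorem~\ref{lifting thm} lifts each such $\psi$ to a rational function $f$ on $X$ whose tropicalization on $\Sigma$ is $\psi$, and one repeats with finitely many such functions $f_1,\dots,f_n$ until the invariant reaches its minimum value, i.e. the tropical curve is smooth.

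The main obstacle is controlling the interaction between the local improvement at $v_0$ and the global constraints: the piecewise-linear function $\psi$ we add must be a \emph{tropical meromorphic function} on all of $\Sigma$ (so its divisor is well-defined and it satisfies the hypotheses of Theorem~\ref{lifting thm}), it must keep the tropicalization fully faithful — which requires that the new map $\varphi' = \varphi \times (f_1,\dots,f_n)$ still have completed extended skeleton equal to $\Sigma$ and still restrict to an isometry, and here one must be careful that the lifted functions $f_i$ do not enlarge the skeleton (one arranges their poles and zeros to lie on $\Sigma$, or at leaves of $\Sigma$) — and simultaneously it must not create non-smoothness at vertices that were already fine, including the finite vertices of $\Sigma$ that only become vertices of the embedded curve once we add coordinates. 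Managing all of these at once means the choice of slopes of $\psi$ at each vertex is a simultaneous integer-programming-type problem; the key point making it solvable is that "become smooth at $v_0$" and "stay smooth elsewhere" and "stay faithful" are each achievable by sufficiently generic integral choices, and finitely many linear conditions can be met simultaneously over $\Z$ after passing to a suitable refinement of the model of $\Sigma$. Once that is set up, the induction on the non-smoothness invariant (Corollary~\ref{Cor resolution of singularities} is exactly this induction reaching its base case) closes the argument.
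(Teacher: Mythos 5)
Your core idea is the paper's: append a coordinate whose slopes at a non-smooth vertex raise the rank of the saturated lattice spanned by the edge directions by one, keep the new function harmless elsewhere, lift it with the lifting theorem, and induct on the non-smoothness invariant (the paper improves all bad vertices simultaneously with one function and inducts on $\max_z n_\varphi(z)$ rather than one vertex at a time, but that is minor). The serious gap is the lifting step. Theorem~\ref{lifting thm} (equivalently Theorem~\ref{lifting thm convenient}) is not a statement that every tropical meromorphic function on $\Gamma$ lifts with prescribed zeros and poles: it requires that $g$ of the points in the positive part of the divisor form a \emph{break divisor} supported on $2$-valent points, i.e.\ one point on each of $g$ edges forming the complement of a spanning tree, and those $g$ lifts are then chosen by the theorem rather than by you. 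A principal divisor supported near the finitely many non-smooth vertices will in general contain no break divisor at all; this is a structural condition on the support, not a genericity condition, so it cannot be met by ``sufficiently generic integral choices.'' The paper's fix is to add the auxiliary principal divisor $P=\sum_j (s_1^j - s_2^j - s_3^j + s_4^j)$ with the $s_i^j$ placed on the $g$ edges complementary to a spanning tree, so that the $s_1^j$ supply the required break divisor and $D+P$ becomes liftable while the extra bumps stay small and local. Without some such device your $\psi$ may simply not be liftable by the quoted theorem.

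A second, smaller problem is your treatment of the remaining vertices. Choosing ``generic, all distinct'' integer slopes for $\psi$ at an already-smooth vertex $w$ is not safe as stated: unless the outgoing slopes at $w$ sum to zero, $w$ lies in $\divisor(\psi)$, the lift $f$ acquires zeros or poles retracting to $w$, and $\Sigma(\varphi')$ gains new infinite rays there, so $\deg(w)$ increases and smoothness at $w$ must be re-established for a larger star (which can fail). The clean choice, made in the paper's proof of Theorem~\ref{thm:resolution}, is to take $\psi$ locally constant near every vertex other than the bad ones, so each direction vector merely gets a zero appended and smoothness is trivially preserved. Relatedly, the new zeros and poles of $f$ create new trivalent vertices of the embedded curve in the interiors of edges; these must be checked to be smooth and to preserve injectivity of $\trop_{\varphi'}$ --- you flag this but do not carry it out, whereas the paper verifies that their direction vectors span a saturated rank-$2$ lattice.
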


In Theorem~\ref{thm:resolution}, we prove a resolution procedure for
singularities of embedded tropical curves.
We use this to show that any smooth algebraic curves admits a map to $(\Proj^1)^{2g+2}$
that results in a smooth tropicalization (Corollary~\ref{cor:smooth-tropicalization}).
The best possible bound on the dimension of the ambient space needed is $2g-1$,
since any curve whose minimal skeleton has a vertex of degree $d$ cannot be embedded
smoothly into a space of dimension $2d-2$ (or smaller).
We are hence three off of the optimal bound.

\subsection{Structure}
In Section~\ref{sec:prelim}, we recall the necessary background on tropicalization, Berkovich skeleta
and (tropical) meromorphic functions.

In Section~\ref{sec:construction} we construct three tropical meromorphic functions on the
skeleton, depending on certain parameters, and we show that these functions are liftable.

In Section~\ref{sec:params} we describe conditions on those parameters
that will allow us to prove Theorem~\ref{ThmA}.

In Section~\ref{sec:injective}
we show that if our parameters meet the conditions stated in Section~\ref{sec:params},
the map induced by the lifts of the functions from Section~\ref{sec:construction}
induces a totally faithful tropicalization.

In Section~\ref{sec:compactifications} we complete the proof of Theorem~\ref{ThmA} by showing
that the conditions in Section~\ref{sec:params} can always be met, and
we show that tropicalizations is indeed already fully faithful.

In Section~\ref{sec:res-of-singularities}
we prove Theorem~\ref{ThmB} via a resolution procedure for embedded tropical curves.

In Section~\ref{sec:genus2} we exhibit our lifting techniques on a more specific example
of a genus $2$ skeleton.

\subsection{Acknowledgements}

The authors would like to thank Joe Rabinoff for fruitful discussions and Matt Baker and Walter Gubler
for comments on a preliminary draft.

\section{Preliminaries} \label{sec:prelim}

Throughout this paper, $K$ will denote an algebraically closed field which is complete with respect to a non-trivial,
non-Archimedean absolute value $|\cdot|_K$.
We denote the value group of $K$ by $\Lambda \coloneqq \log |K^\times| \subseteq \mb R$.

\subsection{Tropicalization of curves in \texorpdfstring{$\Proj^n$}{P\textasciicircum n}}
\label{sec:classical-tropicalization}
Most of our work in this paper is concerned with tropicalizing curves in products of projective spaces.
This is a special case of the more general theory of tropicalizing toric varieties as described in Payne's article \cite{Payne}.
Although some results in this paper are phrased in the more general language of toric varieties, it is sufficient for the reader to picture products of projective spaces.

\begin{definition}\label{def:trop-proj}
The \emph{tropical projective space} $\mb{TP}^n$ is the quotient of
\[
  (\R \cup \{-\infty\})^{n + 1} \setminus \{(-\infty, \dots, -\infty)\}
\]
under the following $\R$-action:
\[
  \lambda \cdot (a_0, \dots, a_n) = (a_0 + \lambda, \dots, a_n + \lambda).
\]
\end{definition}

We define a map $\Log \colon \mb{P}_K^n \to \mb{TP}^n$ by
\[
  \Log( [x_0 : \cdots : x_n] ) = [ \log |x_0|_K : \cdots : \log |x_n|_K ]
\]
with the convention that $\log(0) = -\infty$.

\begin{definition}
  When $X$ is a projective variety over $K$ that intersects the torus, $(K^\times)^n$,
  its \emph{tropicalization} is the closure (in the Euclidean topology) of the image of
  $X$ under $\Log$. We denote the tropicalization of $X$ by $\Trop(X)$.
\end{definition}

\subsection{Limits in \texorpdfstring{$\mb{TP}^n$}{TPn}}
Let us look at the simple case of a tropical curve in $\mb{TP}^2$.
This is a piecewise-linear simplicial complex with some set of extreme rays.
Those extreme rays will have a limit point on one of the boundary strata of $\mb{TP}^2$ which we will now describe.

Let $R = \{[0 : a + tu : b + tv]: t \ge 0\}$ be a ray in the affine plane, $\Trop(K^2)$.
Let $\lim R \coloneqq \lim\limits_{t \to \infty} [0 : a + tu : b + tv]$ denote the limit point of this ray.

\begin{enumerate}[label=Case \arabic*.]
  \item If $u < 0$ and $v < 0$ then $\lim R = [0 : -\infty : -\infty]$.
  \item If $u = 0$ and $v < 0$ then $\lim [0 : a : b + tv] = [0 : a : -\infty]$. Similarly if $v = 0$ and $u < 0$.
  \item If $0 \le u < v$ then $[0 : a + tu + b + tv] = [-tv : a + t(u - v) : b]$ and $\lim R = [-\infty : -\infty : 0]$. Similarly if $0 \le v < u$.
  \item If $0 < u = v$ then $[0 : a + tu : b + tu] = [-tu : a : b]$ and $\lim R = [-\infty : a : b]$.
\end{enumerate}

So if $v < u = 0$ or $u < v = 0$ or $0 < u = v$ then the boundary stratum is 1-dimensional.
Otherwise, the boundary stratum is just a single point. Figure~\ref{fig:strata} illustrates this.
For general $n$, the boundary strata of $\mb{TP}^n$ forms a simplex.

\begin{figure}[thbp]
  \centering

  \begin{tikzpicture}
    \foreach \x/\y in {0/0, 4/0, 0/4}
      \filldraw (\x,\y) circle (0.5mm);

    \draw (0,0) -- (4,0) -- (0,4) -- (0,0);

    \draw [dotted] (1.172, 1.172) circle (5mm); 

    \foreach \x in {1, 1.344} {
      \filldraw (\x,0) circle (0.5mm);
      \filldraw (0,\x) circle (0.5mm);
      \draw (\x,0) -- (\x,0.703);
      \draw (0,\x) -- (0.703,\x);
    }

    \draw (1.382,1.625) -- (1.89,2.12);
    \draw (1.625,1.382) -- (2.12,1.89);
    \filldraw (1.89,2.12) circle (0.5mm);
    \filldraw (2.12,1.89) circle (0.5mm);
  \end{tikzpicture}

  \caption{Boundary strata of $\mb{TP}^2$. Parallel rays in the directions $(-1,0)$, $(0,-1)$ or $(1,1)$ intersect the boundary in distinct points. Rays in any other direction intersect the closest corner.}
  \label{fig:strata}
\end{figure}

We will see in Section~\ref{sec:injective} that this boundary strata does not have enough components to separate all of our extreme rays. Instead, we will work with $(\mb{TP}^1)^3$.

For $\mb{TP}^1$, Definition~\ref{def:trop-proj} is equivalent to the set $\R \cup \{\pm \infty\}$.
The boundary strata of $(\mb{TP}^1)^n$ can be pictured as the $(n - 1)$-skeleton of an $n$-dimensional cube.
For instance, in $(\mb{TP}^1)^3$, parallel rays in the directions $\pm (0,0,1), \pm (0,1,0), \pm (1,0,0)$ have distinct limits.

\subsection{Metric graphs}\label{sec:metric-graphs}

Let $\Gamma$ be a topological space with a distance function
$\dist \colon \Gamma \times \Gamma \to \mb R \cup \{\infty\}$.
We call $\Gamma$ a \emph{metric graph} if it admits a 1-dimensional simplicial structure where every edge $e$ (aka 1-simplex), with the induced distance function $\dist_e$
is isometric to a closed interval: $[0,l] \subseteq \mb R \cup \{\infty\}$.
We allow the possibility of infinite edges---isometric to $[0,\infty]$---
but we require that these infinite edges be leaf edges.

Explicitly, there exists a set of vertices $V$ and set of edges $E$.
Every edge $e$ has a distance function $\dist_e$, such that $e$ is isometric to a closed interval.
Finally, there are maps $\partial e \to V$ that tell us how to glue the edges to the vertices.
Every edge of $\Gamma$ has the usual distance function
which we extend to $\Gamma$ by setting $\dist(x,y) =$ the length of the shortest path from $x$ to $y$.

A choice of $G = (V, E)$ is called a \emph{graph model} of $\Gamma$. We forget about all the distance functions and topologies on $E$ and just remember the lengths.
In this way, $G$ is a graph where each edge $e \cong [0,l]$ has an associated length $l$.
If $G$ is a graph model of $\Gamma$, then so is any length-respecting subdivision of $G$.
When the graph model is fixed, we may refer to edges and vertices of $G$ as edges and vertices of $\Gamma$.

\begin{note}
  Usually one would call $G$ a ``weighted graph'' but since the term ``weight'' is used in relation to the tropical balancing condition, we avoid this here.
\end{note}

Given a subgroup $\Lambda \subseteq \mb R$ (e.g.\ the value group of $K$),
we say that $\Gamma$ is a \emph{$\Lambda$-metric graph} if it admits a graph model $G = (V, E)$
where the weight of every finite edge of $G$ belongs to $\Lambda$.

Given a graph model $G = (V, E)$ for $\Gamma$,
the \emph{$\Lambda$-rational points} of $\Gamma$ are the points
whose distance to some (and hence every) vertex is an element of $\Lambda$---
we call this set $\Gamma(\Lambda)$.

See Section~2.1 of \cite{ABBR} for another description of a metric graph.

We recall that a \emph{spanning tree} of a (connected) graph is a maximal, acyclic collection of edges
such that every vertex of the graph is an endpoint of one of these edges.
If $e_1,\dots,e_g$ form the complement of such a spanning tree, then $g$---which is well defined---is called the \emph{genus} of $G$.
One can check that if $G$ is a graph model of $\Gamma$ then $g = \dim_{\mb Q} {\rm H}_1(\Gamma;\mb Q)$.

\subsection{Berkovich analytic spaces}

For every variety $X$ over $K$, there is a topological space, $\Xan$, introduced by Berkovich \cite{Berk}
called the \emph{Berkovich analytification}.
The points of $\Xan$ are pairs $(p_x, |\cdot|_x)$ where $p_x \in X$ and $|\cdot|_x$ is an absolute value on
the residue field $k(p_x)$ at the point $p_x$ extending the absolute value of $K$.
The topology on $\Xan$ is the weakest topology making the canonical map $\Xan \to X$ continuous and, for every open set $U$ of $X$ and section $f \in \mathcal{O}_X(U)^\times$, the map $U^{\an} \to \mb R$ given by
\[
(p_x, |\cdot|_x) \mapsto |f(x)| \coloneqq |f(p_x)|_x
\]
is continuous.

\subsubsection{Classification of points}

When $X$ is a curve, the points of $\Xan$ can be classified into four types.

If $p_x$ is a closed point of $X$, then $k(p_x) = K$ and $|\cdot|_x = |\cdot|_K$ is the only absolute value we can take.
In this way, we view $X(K)$ as a canonical subset of $\Xan$. Points in $X(K)$ are called type I points of $\Xan$.

If $p_x$ is the generic point of $X$ and $\ms{H}(x)$ is the completion of $k(p_x)$ with respect to $|\cdot|_x$.
Then we say $(p_x, |\cdot|_x)$ is a type II point if $\operatorname{trdeg}(\wt{\ms H}(x)/\wt{K}) = 1$ where $\wt{\cdot}$ denotes the residue field.

The terminology of type I and type II points is due to Thuiller \cite{Thu} following Berkovich's original classification \cite{Berk}.
There is also a notion of type III and IV points (\emph{loc. cit.}) which we do not make use of in this paper.

\subsection{Skeleta and extended skeleta of curves}
When $X$ is a curve, there exists a distinguished set $\Gamma \subset \Xan$ called a \emph{skeleton of $X$ (or of $\Xan$)} with the following key properties.

\begin{enumerate}
  \item A skeleton is a metric graph.
  \item There is a strong deformation retract $\tau \colon \Xan \to \Gamma$.
  \item The map $\tau_* \colon \Div(X) \to \Div_\Lambda(\Gamma)$ is surjective and takes principal divisors to principal divisors. We define the divisor group of $\Gamma$ in Subsection~\ref{sec:skel-div}.
\end{enumerate}

We start by defining skeletons for open discs and open annuli. More detail is given in \cite[Section 2]{BPR13}.

\begin{definition}\label{def:open-annuli}
  Let $\mb{A}^{1,\an} = (\Spec K[T])^{\an}$. We call the sets
  \[ B(r) \coloneqq \{x \in \mb{A}^{1,\an} : |T|_x < r\} \text{ and } A(r,s) \coloneqq \{x \in \mb{A}^{1,\an} : r < \log |T|_x < s\} \]
  \emph{open discs} and \emph{open annuli} respectively. They are parameterized by real numbers $r, s$ which we call \emph{logarithmic radii}. For an open annulus, we also allow $r = -\infty$ in which case $A(-\infty,s)$ is a \emph{punctured disc}.

  The disc $B(t)$ has a distinguished element $\rho_{B(t)}$ defined by
  \[ \abs{\sum a_i T^i}_{\rho_{B(t)}} = \max\limits_i |a_i| t^i. \]
  As the disc $B(r)$ expands to $B(s)$ in the annulus, we take distinguished elements to form the set
  \[ \Sigma(A(r,s)) \coloneqq \{ \rho_{B(t)} : r < \log t < s \}. \]
  This is called the \emph{skeleton} of $A(r,s)$.
\end{definition}

The annulus $A(r,s)$ canonically retracts onto $\Sigma(A(r,s))$ via
\[ \tau \colon \abs{\cdot}_x \longmapsto \rho_{B(\log |T|_x)}. \]
Berkovich showed that this is a strong deformation retraction \cite[Proposition~4.1.6]{Berk}.

\begin{definition}
For a smooth, projective curve $X/K$, a \emph{semistable vertex set} $V$ of $X$ is a finite set of type II points
in $\Xan$ such that $\Xan \setminus V$ is (isomorphic to) a disjoint union
of finitely many open annuli and infinitely many open discs.
Semistable vertex sets always exist \cite[Proposition~4.22]{BPR13}.
If $\chi(X) \le 0$, then a unique minimal skeleton exists [\emph{loc. cit.}, Corollary~4.23].
\end{definition}

Given a semistable vertex set $V$ of $X$, the associated (finite) \emph{skeleton} is
\[ \Sigma(V) \coloneqq V \bigcup \Sigma(A) \]
where the union is over the finite set of open annuli of $\Xan \setminus V$.
There is a canonical retraction $\tau_V \colon \Xan \to \Sigma(V)$ which is, in fact, a strong deformation retraction.

$\Sigma(V)$ is a $\Lambda$-rational metric graph with a canonical graph model $(V, E)$.
The edges of $\Sigma(V)$ are $\Sigma(A)$ for each open annulus $A$.
The length of the edge $\Sigma(A)$ is the length $s - r$ defined in Definition~\ref{def:open-annuli}.

\subsubsection{Completed skeleta}
A \emph{completed semistable vertex set} is defined the same as a semistable vertex set except
we also allow ourselves to include some points of type I.
These type I points are infinitely far away from the finite skeleton.
If $V$ is a completed semistable vertex set,
then the set of type II points in $V$ form a semistable vertex set by themselves.

The skeleton associated to a completed semistable vertex set is called a \emph{completed skeleton}.
It is defined similarly.
The main difference is that the addition of type I points turns
some open discs of $\Xan \setminus V$ into punctured discs.
The skeleton of a punctured disc is an edge of infinite length.

\begin{convention}
We typically use the letter $\Gamma$ in this paper for a finite skeleton and $\Sigma$ for a completed skeleton.
\end{convention}

\subsubsection{Skeleta associated to toric embeddings}

Let $X$ be a smooth projective curve and let $\vf \colon X \to Y$
be a closed embedding of $X$ into a toric variety $Y$.
Let $T$ be the dense torus in $Y$. Let $X^\circ = \vf^{-1}(T)$.

\begin{definition}
The \emph{completed extended skeleton associated to $\vf$} is the set $\Sigma(\vf)$ of points in $\Xan$
that do not have an open neighborhood contained in $(X^\circ)^{\an}$ and isomorphic to an open disc. We write $\mathring{\Sigma}(\vf)$ for the skeleton $\Sigma(\vf)$ with its type I points removed.
\end{definition}

\begin{example}
If $Y$ is a product of $\mathbf{P}^1$'s,
then $\vf$ is defined by a set of rational functions and $X^\circ$ is the set of points
that are neither zeroes nor poles of those functions.
The skeleton $\Sigma(\vf)$ contains all of those zeroes and poles as type I points.
\end{example}

\subsection{Tropicalization of analytic curves}
If $Y$ is a projective space (or product of projective spaces) over $K$,
then the map $\Log \colon Y \to \Trop(Y)$ defined in Section~\ref{sec:classical-tropicalization}
extends to the analytification, $Y^{\an}$.
We call this map $\trop \colon Y^{\an} \to \Trop(Y)$.

More generally, if $Y$ is a toric variety, then there is a map $\trop \colon Y^{\an} \to \Trop(Y)$.
See \cite[Section 3]{Payne} for the definition.

\begin{example}
When $Y = \mb P^1 = \operatorname{Proj}K[z_0, z_1]$,
the map $\trop \colon \mb P^{1} \to \mb{TP}^1$ is given by
\[ \trop((p, |\cdot|_x)) = \log|z_1(p)|_x. \qedhere\]
\end{example}

When there is a closed embedding $\vf$ of $X$ into the toric variety $Y$ (e.g.\ if $X$ is projective),
we can use this to tropicalize $X$ via
\[ \trop_{\vf} \coloneqq \trop \circ \varphi^{\an} \colon \Xan \to \Trop(Y). \]
The image of $\Xan$ under $\trop_{\vf}$ is denoted $\Trop_{\vf}(X)$.

\subsection{Fully faithful, totally faithful and smooth} \label{sec:ff-and-smooth}

Let $\varphi  \colon X \to Y$ be a map from $X$ to a toric variety $Y$,
that is generically finite and whose image meets the dense torus $T$ of $Y$.
Let $U \coloneqq \varphi^{-1}(T).$
Let $N$ be the cocharacter lattice of $T$ and $N_\R \coloneqq N \otimes_{\Z} \R$.
The map $\trop_{\varphi}$
is called \emph{totally faithful} (see \cite{CFPU}) if it induces an isometry from the associated
open skeleton $\mathring \Sigma(\varphi)$ onto its image
(which is exactly $\trop(\Xan) \cap N_\R$.)
It is called \emph{fully faithful} if it is further injective when restricted to $\Sigma(\varphi)$.
This is equivalent to the statement that $\trop_{\varphi}$ is injective when restricted to $\varphi^{-1}(Y \setminus T)$.

The map $\trop_{\varphi}\!|_{\Sigma(\varphi)}$ is linear with integral slope on each edge of $\Sigma(\varphi)$.
We call this slope the \emph{stretching factor} of $\trop_{\varphi}$ on $e$.
Identifying $T$ with $\G_m^n$, the restriction $\varphi_{U}$ is given by rational functions $f_1,\dots,f_n$ on $X$.
Then the stretching factors of $\trop_{\varphi}$ on $e$ is given by the gcd of the slopes of
$\log \vert f_i \vert|_e$, $i = 1,\dots,n$ \cite[5.6.1]{BPR16}.
In particular, $\varphi$ induces a fully faithful tropicalization if $\trop_{\varphi}\!|_{\Sigma(\varphi)}$
is injective and all stretching factors are equal to one.

Let $\varphi \colon X  \to Y$ be a closed embedding and let $\Sigma(\varphi)$
be the associated completed extended skeleton.
We say that $\trop_{\varphi}$ is a \emph{smooth tropicalization}
if it is fully faithful and further
for every finite vertex $x$ of $\Sigma(\varphi)$ the
primitive integral vectors along the edges adjacent $\trop_{\varphi}(x)$
span a saturated lattice in $N$ of rank $\deg(x) - 1$.

Usually the conditions for smoothness for tropical curves
do not reference fully faithfulness and instead weights.
This is equivalent to our definition in view of \cite[Section 5]{J}.

\subsection{Divisors and rational functions on a metric graph}\label{sec:skel-div}

If $\Gamma$ is a $\Lambda$-metric graph then a ($\Lambda$-rational) \emph{divisor} on $\Gamma$ is a finite,
formal integer-linear combination of $\Lambda$-rational points on $\Gamma$.
These divisors form a free Abelian group, which we call $\Div_\Lambda(\Gamma)$.

A \emph{rational function} on $\Gamma$ is a piecewise-linear function $F$ with integer slopes and
such that all the points where $F$ is non-linear are $\Lambda$-rational.
If these points where $F$ is non-linear are called $x_1,\dots,x_n$,
then the \emph{principal divisor} associated to $F$ is
\[ \sum_{i = 1}^n m_i x_i \]
where $m_i$ is the sum of the outgoing slopes of $F$ at $x_i$.
The principal divisors on $\Gamma$ form a subgroup, which we call $\Prin_\Lambda(\Gamma)$.

If $\tau \colon \Xan \to \Gamma$ is the deformation retraction of $\Xan$ onto its skeleton,
then $\tau$ maps $X(K)$ onto $\Gamma(\Lambda)$.
We can therefore extend this map to a surjective map $\tau_* \colon \Div(X) \to \Div_\Lambda(\Gamma)$.

Let $f \in K(X)^*$ be a rational function.
Then $\log|f|$ is a function on $\Xan$.
If $F$ is the restriction of $\log|f|$ to $\Gamma$, then it is known that $F$ is a $\Lambda$-rational function.
Moreover, \[ \tau_*\divisor(f) = \divisor(F). \]
This means that $\tau_*$ takes principal divisors to principal divisors.

\begin{note}
These two facts about $\log|f|$ are referred to as the ``slope formula'' or
``non-Archimedean Poincaré-Lelong formula'' in the literature.
The formula was first stated and proved in our terminology by Baker, Payne and Rabinoff~\cite{BPR13},
Theorem~5.15.
The original result is due to Thuiller~\cite{Thu} who phrased it in terms of potential theory.
Thuiller's formulation closely resembles the classical formula for complex manifolds.
\end{note}

More results about the connection between $\Div(X)$ and $\Div_\Lambda(\Gamma)$
may be found in \cite{Baker} and \cite{BR}.

\begin{definition}
An effective divisor $B$ on a metric graph $\Gamma$ is called a \emph{break divisor}
if there exists a graph model $G$ of $\Gamma$ and edges $e_1,\dots,e_g$ of $G$
forming the complement of a spanning tree
such that $B = x_1 + \dots + x_g$ where $x_i \in e_i$.
\end{definition}

Break divisors were first introduced by Mikhalkin and Zharkov \cite{MZ} and
were used by An, Baker, Kuperberg, and Shokrieh \cite{ABKS}
to give a geometric proof of Kirchhoff's Matrix-Tree Theorem.

\subsection{Mumford curves}

\begin{definition}
  A smooth, projective curve $X$ over $K$ is called a \emph{Mumford curve}
if the genus of $X$ is equal to the genus (i.e.\ the first Betti number) of its skeleton.
\end{definition}

While the question of which curves admit fully or totally faithful tropicalizations
is still open, it is known that only Mumford curves admit smooth tropicalizations.

\begin{theorem}{\rm \cite[Theorem A]{J}}
Let $X$ be a smooth projective curve.
Then the following are equivalent
\begin{enumerate}
\item
$X$ is a Mumford curve.
\item
There exists an embedding $\varphi \colon X \to Y$ for a toric variety $Y$
such that $\Trop_{\varphi}(X)$ is smooth.
\end{enumerate}
\end{theorem}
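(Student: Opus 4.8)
The plan is to prove the two implications by quite different means: $(2)\Rightarrow(1)$ is a rigidity statement read off from the semistable reduction, while $(1)\Rightarrow(2)$ is an explicit construction resting on the lifting theorem.

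For $(2)\Rightarrow(1)$, suppose $\varphi\colon X\to Y$ is a closed embedding with $\Trop_\varphi(X)$ smooth. Choose a semistable vertex set $V$ of $X$ refining the finite type~II vertices of $\Sigma(\varphi)$; the associated semistable model has special fibre with components $C_v$ indexed by $v\in V$, and one has the genus decomposition $g(X)=b_1\big(\Sigma(V)\big)+\sum_{v\in V}g(C_v)$, where $b_1\big(\Sigma(V)\big)$ equals the genus of the minimal skeleton of $X$ (it is unchanged by subdivision, by adjoining type~I points, and by the finitely many tails in $\Sigma(\varphi)$). Since $X$ is a Mumford curve precisely when its genus equals that of its minimal skeleton, it suffices to see that each $g(C_v)=0$; and this is exactly what smoothness buys. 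At a finite vertex $x$ of $\Sigma(\varphi)$ the requirement that the primitive integral directions of the edges at $\trop_\varphi(x)$ span a \emph{saturated} sublattice of $N$ of rank $\deg(x)-1$ means, under the translation between this condition and tropical weights recorded in \cite[Section~5]{J}, that the star of $\Trop_\varphi(X)$ at $\trop_\varphi(x)$ is the standard smooth tropical line on $\deg(x)$ rays; and the only nodal curve whose normalisation-with-marked-points maps finitely onto that picture with all multiplicities one is $\Proj^1$ with $\deg(x)$ marked points. Hence $C_v\cong\Proj^1$ for every $v$, so $\sum_v g(C_v)=0$ and $X$ is a Mumford curve.

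For $(1)\Rightarrow(2)$, let $X$ be Mumford with minimal skeleton $\Gamma$, so $b_1(\Gamma)=g\coloneqq g(X)$. I would first fix a trivalent graph model of $\Gamma$ and write down a finite family of $\Lambda$-rational tropical meromorphic functions $F_1,\dots,F_n$ on $\Gamma$ with two combinatorial features: the induced integral-affine map $\Gamma\to\R^n$ is an \emph{isometric} embedding (on every edge some $F_i$ has slope $\pm 1$), and at every vertex $x$ the outgoing primitive slope vectors of the $F_i$ span a saturated sublattice of $\Z^n$ of rank $\deg(x)-1=2$ (so that after a suitable coordinate projection the local picture at $x$ is the fan $e_1,e_2,-e_1-e_2$). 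This is a purely combinatorial task on the metric graph: one starts from any isometric integral-affine embedding and removes the finitely many ``bad'' vertices one at a time by appending further coordinates --- the skeleton-level analogue of the re-embedding step in the resolution procedure of this paper (cf.\ Theorem~\ref{thm:resolution}). Next I would apply the lifting theorem (Theorem~\ref{lifting thm}) of \cite{J} to lift each $F_i$ to a rational function $f_i\in K(X)^\times$ with $\log|f_i|$ restricting to $F_i$ on $\Gamma$ and with the prescribed zeros and poles, enlarging $n$ if necessary (using extra liftable functions chosen to have zero slope along $\Gamma$, so as not to spoil the two features above) so that $\varphi=(f_1,\dots,f_n)\colon X\to(\Proj^1)^n$ is a closed embedding. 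The completed extended skeleton $\Sigma(\varphi)$ is then $\Gamma$ together with the type~I zeros and poles of the $f_i$ and the finitely many finite tails joining them --- which adds no homology --- while $\trop_\varphi|_\Gamma$ is by construction the combinatorial map just designed, so $\Trop_\varphi(X)$ is smooth.

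The main obstacle is the lifting step in $(1)\Rightarrow(2)$: a piecewise-linear function on $\Gamma$ chosen purely for its slope pattern need not arise as $\log|f|$ for any $f\in K(X)^\times$, because its divisor on $\Gamma$ must be the $\tau_*$-image of a principal divisor on $X$, and the surjectivity of $\tau_*$ from $\Prin(X)$ onto $\Prin_\Lambda(\Gamma)$ is, morally, exactly the non-Archimedean uniformisation of the Jacobian that characterises Mumford curves. So the Mumford hypothesis is doing essential work here --- it is what makes the combinatorially optimal functions on $\Gamma$ realisable over $X$ --- and the refined lifting theorem of \cite{J}, in contrast to the coarser one of \cite{BR}, is precisely the tool that supplies the control on the divisor of the lift that smoothness of $\Trop_\varphi(X)$ demands. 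The reverse implication makes clear why nothing better is possible: a reduction component of positive genus can never sit inside a saturated-lattice vertex star, so a non-Mumford curve admits no smooth tropicalisation at all.
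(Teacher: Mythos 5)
First, note that the paper does not prove this statement at all: it is quoted verbatim as Theorem~A of \cite{J}, so there is no internal proof to compare against. What follows is therefore a comparison with the known argument (in \cite{J}, and with the constructions of the present paper, which reprove the hard direction in a refined form).

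Your direction $(2)\Rightarrow(1)$ is essentially the right argument. The genus decomposition $g(X)=b_1(\Sigma(V))+\sum_v g(C_v)$ over a semistable vertex set is correct, and smoothness does force $g(C_v)=0$ at every vertex. I would only press you to make the one sentence you assert (``the only nodal curve \dots is $\Proj^1$'') precise: after a $\mathrm{GL}(N)$ change of coordinates the star at $\trop_\varphi(x)$ is the standard line $e_1,\dots,e_{d-1},-\sum e_i$, and the reduction of a suitably normalized coordinate function $f_i$ is a rational function on $C_x$ whose divisor is supported on the $d$ reduction points of the adjacent edges with multiplicities equal to the slopes; for the standard line this divisor has positive part of degree one, so $C_x$ carries a degree-one map to $\Proj^1$ and is rational. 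That is exactly the mechanism in \cite[Section~5]{J}, and without it your claim is only an assertion.

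Direction $(1)\Rightarrow(2)$ is where the genuine gaps are, and they sit precisely where \cite{J} and the present paper do all their work. (i) The lifting theorem does not lift an arbitrary liftable-looking $F_i$ ``with prescribed zeros and poles'': it lifts a divisor only after reserving a break divisor supported on $2$-valent points whose lifts the theorem \emph{chooses for you}, and the lifts that you do prescribe must be chosen consistently across $F_1,\dots,F_n$ so that the completed extended skeleton does not sprout several distinct rays over the same point of $\Gamma$ (this is exactly the reason for the ``bumps'' in Figure~\ref{fig:F2-graphT} and for the order of Propositions~\ref{D3 lift}--\ref{D2 lift}). Your plan never arranges the divisors of the $F_i$ to meet these constraints. (ii) Smoothness and full faithfulness are global conditions: you must also verify that $\trop_\varphi$ is \emph{injective} on all of $\Sigma(\varphi)$, including at the new trivalent points where the infinite rays to the zeros and poles attach and at the boundary strata of the toric variety, and that each such trivalent point itself satisfies the saturated-lattice condition. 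Declaring that the extra rays ``add no homology'' does not address injectivity or smoothness there; Sections~\ref{sec:injective}--\ref{sec:compactifications} of this paper exist precisely because this step is delicate. So the proposal is a correct high-level plan for $(1)\Rightarrow(2)$, but not yet a proof.
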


This theorem shows that, at least for the results of Section \ref{sec:res-of-singularities}, we have to consider Mumford curves.
The question of whether general smooth algebraic curves admit
fully faithful tropicalizations is open for non-Mumford curves.

\section{Construction of fully faithful tropicalization in 3-space} \label{sec:construction}

In this section, $X$ will denote a Mumford curve over a complete, algebraically closed,
non-Archimedean valued field $K$ with analytification $\Xan$ and skeleton $\Gamma$.
We take $G$ to be a graph model of $\Gamma$ with vertex set $V = V(G)$ and edge set $E = E(G)$.

After possibly subdividing, we assume that $G$ has edges $e_1,\dots,e_g$ that form the complement of a spanning tree,
$T \subseteq E$, and that no two edges $e_i, e_j$ share a vertex.

We will define three piecewise-linear functions $F_1, F_2, F_3$ on $\Gamma$
whose graphs are depicted in Figures~\ref{fig:F1-graph} to \ref{fig:F2-graphT}.
To construct these piecewise-linear functions, we consider divisors on $\Gamma$ and use the following lifting theorem.

\begin{theorem}[Jell] \label{lifting thm}
Let $D$ be a divisor on $X$ of degree $g$.
Given any break divisor $B = x_1 + \dots + x_g$ on $\Gamma$ supported on 2-valent points,
if $\tau_*D - B$ is principal then there exist liftings $x_1',\dots,x_g' \in X(K)$ such that $\tau_*x_i' = x_i$ and
such that $D - \sum_{i = 1}^g x_i'$ is a principal divisor.
\end{theorem}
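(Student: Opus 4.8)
The plan is to deduce this refinement from the lifting theorem of Baker and Rabinoff together with the structure theory of break divisors. First I would recall the relevant statement from \cite{BR}: given a rational function $f$ on $X$ and its associated piecewise-linear function $F = \log|f||_\Gamma$, one can prescribe the locations in $X(K)$ of the zeros and poles of $f$ that lie over $\Lambda$-rational points of $\Gamma$, subject only to the compatibility that their specializations match $\divisor(F)$. The new content here is that we want to do this for a divisor $D$ that is \emph{not} assumed principal, but only such that $\tau_* D - B$ is principal for a break divisor $B$. So the first step is to pass from $D$ to a genuinely principal situation: since $\tau_* D - B = \divisor(F)$ for some $\Lambda$-rational function $F$ on $\Gamma$, and since $\tau_*$ is surjective with $\tau_*(\Prin(X)) = \Prin_\Lambda(\Gamma)$, one knows $\tau_* D - B$ is in the image of $\tau_*$ restricted to principal divisors; but that is automatic and not yet enough — we need an actual rational function on $X$ realizing it.

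The key step is therefore to invoke the surjectivity statement in property (3) of skeleta together with the slope formula: because $\tau_* D - B$ is principal on $\Gamma$, and break divisors have degree $g$ with $D$ of degree $g$, the divisor $B$ has the same degree as $\tau_* D$, and $\tau_* D - B$ being principal means it equals $\divisor(F)$. I would then use the Baker--Rabinoff lifting theorem to produce a rational function $h$ on $X$ with $\log|h||_\Gamma = F$ and with $\tau_* \divisor(h) = \tau_* D - B$, where moreover the zeros and poles of $h$ can be chosen to have prescribed specializations; in particular I can arrange that the polar part of $\divisor(h)$ over the $2$-valent points carrying $B$ is controlled. The divisor $D - \divisor(h)$ then specializes to $B$, i.e. $\tau_*(D - \divisor(h)) = B = x_1 + \dots + x_g$. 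Writing $D - \divisor(h) = \sum a_j y_j$ with $\tau_*(\sum a_j y_j) = \sum x_i$, I then want to move the points $y_j$ within their annuli/discs so that they land at prescribed lifts $x_i'$ of the $x_i$. Here I would again apply the lifting theorem (or the part of it giving freedom in choosing lifts): since $B$ is a break divisor supported on $2$-valent points, each $x_i$ lies in the interior of an edge (an annulus), and one has the freedom to slide a zero of a rational function along that annulus to any chosen $\Lambda$-rational lift; doing this for each $i$ produces a principal divisor $\divisor(h')$ with $D - \sum x_i' = \divisor(h) + \divisor(h') = \divisor(h h')$, as desired.

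The main obstacle I expect is the bookkeeping in the second application of the lifting step: one must simultaneously relocate all $g$ of the points $x_i'$ to their chosen positions without disturbing the parts of the divisor that are already correctly placed, and without introducing new zeros or poles over vertices of $\Gamma$. This is exactly the kind of thing the hypothesis ``$B$ supported on $2$-valent points'' and the hypothesis ``no two of the $e_i$ share a vertex'' are there to make clean — each relocation happens inside a single open annulus $A(e_i)$, these annuli are disjoint, and on each one the local version of the slope formula lets us write down an explicit function (a unit times a power of the coordinate) with a single zero at the desired place. So the real work is checking that these local modifications glue to a global rational function on $X$, which follows because their combined divisor is $\tau_*$-equivalent to $0$ over $\Gamma$ and hence, by the surjectivity in property (3) together with the fact that $\Gamma$'s Jacobian controls the obstruction, is genuinely principal; alternatively, and more in the spirit of \cite{BR} and \cite{J}, one packages the whole relocation into a single application of their lifting theorem with the prescribed divisor $D$ and prescribed lift positions from the outset. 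I would present the argument in that packaged form to minimize the gluing bookkeeping.
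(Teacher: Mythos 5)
The paper does not actually prove this statement; its ``proof'' is a one-line citation to Theorem~3.2 of \cite{J}, where the result is established using the non-Archimedean uniformization of the Jacobian of a Mumford curve. Your attempt to re-derive it from the Baker--Rabinoff lifting theorem founders on one central point. The entire content of the theorem is the gap between being principal on $\Gamma$ and being principal on $X$: a degree-zero divisor on $X$ whose retraction $\tau_*$ is principal on $\Gamma$ is \emph{not} thereby principal on $X$. The tropical Abel--Jacobi map $\operatorname{Jac}(X)(K)\to\operatorname{Jac}(\Gamma)$ has a large kernel (for a Mumford curve of genus $g$, the analytic Jacobian is a quotient of $(\G_m^{\an})^g$ and tropicalization kills the $g$-dimensional ``unit torus''), so your sentence asserting that the combined divisor ``is $\tau_*$-equivalent to $0$ over $\Gamma$ and hence \ldots is genuinely principal'' is exactly the false implication the theorem exists to circumvent. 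What Theorem~3.2 of \cite{J} actually proves is that as the lifts $x_i'$ range over the fibers $\tau^{-1}(x_i)$ --- open annuli sitting over the interiors of the edges $e_1,\dots,e_g$ --- the class of $D-\sum x_i'$ sweeps out all of this kernel; the break-divisor hypothesis (the $e_i$ form the complement of a spanning tree, hence index a basis of ${\rm H}_1(\Gamma)$, hence coordinates on the uniformizing torus) is precisely what makes that sweep surjective. Your argument never engages with this kernel and never uses the Mumford hypothesis, which is indispensable: the statement is not known (and smoothness-type consequences are known to fail) for non-Mumford curves.

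Two further points. First, the Baker--Rabinoff theorem you invoke does not let you ``prescribe the locations in $X(K)$ of the zeros and poles'' at will; it produces \emph{some} lift of a principal divisor on $\Gamma$ to a principal divisor on $X$, and the refinement allowing all but $g$ points to be prescribed (with the remaining $g$ arranged as a break divisor) is exactly the theorem you are being asked to prove. Second, your closing suggestion to ``package the whole relocation into a single application of their lifting theorem with the prescribed divisor $D$ and prescribed lift positions from the outset'' is circular for the same reason: that packaged statement \emph{is} Theorem~\ref{lifting thm}. A correct write-up here either cites \cite{J} as the paper does, or reproduces the Jacobian-theoretic argument; it cannot be reduced to sliding zeros within disjoint annuli plus a gluing check.
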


\begin{proof}
Theorem 3.2 of \cite{J}.
\end{proof}

Another equivalent way of writing this theorem is the following.

\begin{theorem} \label{lifting thm convenient}
Let $D = \sum_{i=1}^k a_i - \sum_{j = 1}^k b_j$ be a principal divisor on $\Gamma$.
Assume that $\sum_{i=1}^g a_i$ is a break divisor supported on 2-valent points.
Then, given preimages $x_i$ and $y_j$ for all $i = g+1,\dots,k$ and all $j = 1,\dots,k$
such that $\tau(x_i) = a_i$ and $\tau(y_j) = b_j$,
there exist $x_1,\dots,x_g \in X(K)$ with $\tau(x_i) = a_i$ such that
$\sum_{i=1}^k x_i - \sum_{j=1}^k  y_i$ is a principal divisor on $X$.
\end{theorem}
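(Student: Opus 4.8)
The plan is to deduce Theorem~\ref{lifting thm convenient} from Theorem~\ref{lifting thm} by a direct translation, with the only real work being to package the hypotheses correctly. First I would set $D = \sum_{i=1}^k a_i - \sum_{j=1}^k b_j$ as a principal divisor on $\Gamma$, and I would let $B \coloneqq \sum_{i=1}^g a_i$, which by hypothesis is a break divisor supported on $2$-valent points. The given preimages $x_{g+1},\dots,x_k$ of $a_{g+1},\dots,a_k$ and $y_1,\dots,y_k$ of $b_1,\dots,b_k$ determine an auxiliary divisor on $X$, namely $D' \coloneqq \sum_{i=g+1}^k x_i - \sum_{j=1}^k y_j$. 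One computes immediately that $\tau_* D' = \sum_{i=g+1}^k a_i - \sum_{j=1}^k b_j = D - B$. Hence $D' $ is a divisor on $X$ with $\tau_* D' - (-B)$... — more usefully, set $\widetilde D \coloneqq D' + B_{\mathrm{aux}}$ — let me instead apply the earlier theorem as follows.

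Take $D \coloneqq D' = \sum_{i=g+1}^k x_i - \sum_{j=1}^k y_j$ in the role of the divisor ``$D$'' of Theorem~\ref{lifting thm}; note $\deg D' = (k-g) - k = -g$, so that is the wrong degree. To fix this, I would instead work with $\widehat D \coloneqq \sum_{i=g+1}^k x_i - \sum_{j=1}^k y_j + \sum_{i=1}^g x_i'^{\,0}$ for \emph{any} chosen lifts $x_i'^{\,0}$ of $a_i$, so that $\deg \widehat D = g$, or — cleaner — I would observe that Theorem~\ref{lifting thm} really only uses that $\tau_* D - B$ is principal and not the precise degree, and that the degree hypothesis there is harmless since we may always add a fixed ``balancing'' point. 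Concretely: pick any single point $a_0 \in \Gamma(\Lambda)$ not among the $a_i, b_j$ and a preimage; the bookkeeping only shifts both sides by the same principal-equivalence-preserving term. The key identity to verify is that $\tau_*(\widehat D) - B$ is principal on $\Gamma$: this equals $\bigl(\sum_{i=g+1}^k a_i - \sum_{j=1}^k b_j + \sum_{i=1}^g a_i\bigr) - B = \sum_{i=1}^k a_i - \sum_{j=1}^k b_j = D$, which is principal by hypothesis. So Theorem~\ref{lifting thm} applies to $\widehat D$ and $B$, producing $x_1,\dots,x_g \in X(K)$ with $\tau_* x_i = a_i$ and $\widehat D - \sum_{i=1}^g x_i$ principal on $X$.

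Finally I would unwind: $\widehat D - \sum_{i=1}^g x_i = \sum_{i=g+1}^k x_i - \sum_{j=1}^k y_j + \sum_{i=1}^g x_i'^{\,0} - \sum_{i=1}^g x_i$. Since $\sum_{i=1}^g x_i'^{\,0}$ and $\sum_{i=1}^g x_i$ both retract to $B$, their difference is principal on $X$ (here one uses the slope formula: a degree-zero divisor on $X$ whose retraction is a principal divisor on $\Gamma$... — actually this needs care, so it is better to avoid introducing $x_i'^{\,0}$ at all). The cleanest route, and the one I would actually write, is to apply Theorem~\ref{lifting thm} with $D$ there taken to be a divisor of degree $g$ built \emph{only} from the given data plus the break divisor itself — i.e.\ note that $\sum_{i=1}^g a_i$ lifted canonically is not needed because Theorem~\ref{lifting thm} hands us the lifts. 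So: the divisor on $X$ we feed in is forced to have degree $g$, and the natural degree-$g$ divisor available is $\bigl(\sum_{i=g+1}^k x_i - \sum_{j=1}^k y_j\bigr) + (\text{any degree-}(2g-k)\text{ correction})$; since $k \ge g$ is not assumed one must be slightly careful, but in all applications in this paper $k \ge g$. I expect the main (very minor) obstacle to be exactly this degree-matching sleight of hand — choosing the correction term so that both the degree is $g$ and the retraction of $\tau_* D - B$ is manifestly the given principal divisor $\sum a_i - \sum b_j$ — after which the conclusion ``$\sum_{i=1}^k x_i - \sum_{j=1}^k y_j$ is principal on $X$'' follows by subtracting the correction term, which was chosen principal, from $\widehat D - \sum_{i=1}^g x_i$. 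The converse direction (that Theorem~\ref{lifting thm convenient} implies Theorem~\ref{lifting thm}) is symmetric and I would remark on it in one line rather than prove it, since only the stated implication is used.
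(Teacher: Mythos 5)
There is a genuine gap: you correctly identify that the statement should reduce to Theorem~\ref{lifting thm}, but you never actually produce a valid input for it, and you say so yourself (``I expect the main (very minor) obstacle to be exactly this degree-matching sleight of hand''). The missing idea is simply to \emph{negate}. The paper applies Theorem~\ref{lifting thm} to the divisor on $X$ given by the prescribed preimages of the \emph{poles} minus the prescribed preimages of the non-break zeroes, namely $D_X \coloneqq \sum_{j=1}^{k} y_j - \sum_{i=g+1}^{k} x_i$, which has degree $k-(k-g)=g$ as required, together with $B=\sum_{i=1}^{g} a_i$. Then $\tau_*D_X - B = \sum_j b_j - \sum_i a_i = -D$, which is principal because $D$ is. Theorem~\ref{lifting thm} then hands you lifts $x_1,\dots,x_g$ of $a_1,\dots,a_g$ with $D_X-\sum_{i=1}^g x_i = -\bigl(\sum_{i=1}^k x_i - \sum_{j=1}^k y_j\bigr)$ principal, which is the conclusion. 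Your proposal instead tries to keep the sign of $D$ and repair the degree by inserting auxiliary lifts or an unspecified ``correction term,'' and this is where it breaks down.

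Moreover, the one concrete repair you sketch is based on a false claim: that since $\sum_i x_i'^{\,0}$ and $\sum_i x_i$ both retract to $B$, their difference is principal on $X$. Two degree-zero divisors on $X$ whose retractions to $\Gamma$ agree (or are linearly equivalent on $\Gamma$) need \emph{not} be linearly equivalent on $X$; the map $\tau_*$ on divisor classes has a large kernel, and controlling exactly this failure is the whole point of the lifting theorem. You flag this yourself (``actually this needs care''), but then the alternative route you propose is left as a sketch with the degree/sign bookkeeping unresolved. So the reduction strategy is right, but the proof as written is incomplete; the one-line fix is the sign flip above.
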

\begin{proof}
This follows from the lifting theorem applied with
\[
D = \sum_{i = 1}^{k} b_i - \sum_{i = g+1}^k a_i \text{ and }  B = \sum_{i=1}^{g} x_i.
\qedhere
\]
\end{proof}

\subsection{Constructions of the piecewise-linear functions and lifting}

We construct the piecewise-linear functions $F_1$, $F_2$ and $F_3$ by specifying their divisors.
To construct these divisors, we will need to choose, for each edge $e$, points
which will be labeled $c_e, a_e, p_e, q_e, b_e, d_e$ in the interior of $e$. This will be the order of the points in their respective edge.
We also require that the pairs $c_e, d_e$ and $a_e, b_e$ and $p_e, q_e$ are symmetric about the middle of their edges.

We will describe the exact position of these points inside their edges in Section~\ref{sec:params}.
The statements of this section do not depend on the choices made in Section~\ref{sec:params}.

We pick the following additional data:
For every edge $e$, we label one of its endpoints $v(e)$ and the other one $w(e)$
and we pick for each edge $e$ a positive integer $s(e)$.
We will describe which vertex is $v(e)$ and which is $w(e)$ in
Section~\ref{sec:params} along with conditions for the integers $s(e)$.

Let $\{e_1, \dots, e_g\}$ be the edges not in the spanning tree $T$ and
note that the following divisors are all principal
\begin{align*}
D_1 &= \sum_{e \in E} v(e) + w(e) - p_e - q_e, \\
D_2 &= \sum_{e \in E} s(e) \left( v(e) + w(e) - p_e - q_e \right) +
\sum_{i = 1}^g -c_{e_i} + a_{e_i} + b_{e_i} - d_{e_i}, \\
D_3 &= \sum_{e \in E} a_e - b_e.
\end{align*}
Let $F_i$ be a piecewise-linear function such that $\divisor(F_i) = D_i$.
The graphs of $F_i$ are depicted in Figures \ref{fig:F1-graph}, \ref{fig:F3-graph}, \ref{fig:F2-graphT'}
and \ref{fig:F2-graphT}.
Our graphs look similar to the graphs of the functions used by Baker and Rabinoff
(and depicted in \cite[Figure 1]{BR}), however they are tweaked to fit with our lifting theorem.
Notice for example the slight bumps in Figure \ref{fig:F2-graphT},
which are there specifically to allow application of our lifting theorem.


\begin{figure}[hbp]

\centering
\begin{minipage}{0.49\textwidth}
\begin{tikzpicture}[scale=0.6]
  \draw  (1,-1) -- (8,-1);

  \draw (1,0) -- (4,3) -- (5,3) -- (8,0);

  \node[below] at (1,-1) {$v(e)$};
  \node[below] at (4,-1) {$p_e$};
  \node[below] at (5,-1) {$q_e$};
  \node[below] at (8,-1) {$w(e)$};

  \node[rotate=45, above] at (2.5,1.5) {slope 1};
\end{tikzpicture}
\caption{The graph of $F_1|_{e}$.}
\label{fig:F1-graph}
\end{minipage}
\begin{minipage}{0.49\textwidth}
\begin{tikzpicture}[scale=0.6]
  \draw  (1,-1) -- (8,-1);

  \draw (1,0) -- (3,0) -- (6,3) -- (8,3);

  \node[below] at (1,-1) {$v(e)$};
  \node[below] at (3,-1) {$a_e$};
  \node[below] at (6,-1) {$b_e$};
  \node[below] at (8,-1) {$w(e)$};

  \node[rotate=45, above] at (4.5, 1.5) {slope 1};

  \node[above right] at (1,0) {$r(v(e))$};
  \node[above left] at (8,3) {$r(w(e))$};

  \end{tikzpicture}
\caption{The graph of $F_3|_{e}$.}
\label{fig:F3-graph}
\end{minipage}

\vspace{.5cm}

\begin{minipage}{0.49\textwidth}
  \begin{tikzpicture}[scale=0.6]
    \draw  (1,-1) -- (8,-1);

    \draw (1,0) -- (4,5) -- (5,5) -- (8,0);

    \node[below] at (1,-1) {$v(e)$};
    \node[below] at (4,-1) {$p_e$};
    \node[below] at (5,-1) {$q_e$};
    \node[below] at (8,-1) {$w(e)$};

    \node[rotate=59.03, above] at (2.5,2.5) {slope $s(e)$};

    \end{tikzpicture}
\caption{The graph of $F_2|_{e}$ for $e \in T$.}
\label{fig:F2-graphT'}
\end{minipage}
\begin{minipage}{0.49\textwidth}
\begin{tikzpicture}[scale=0.6]
  \draw  (1,-1) -- (8,-1);

  \draw (1,0) -- (2,2) -- (3,3) -- (4,5) -- (5,5) -- (6,3) -- (7,2)  -- (8,0);

  \node[below] at (1,-1) {$v(e)$};
  \node[below] at (2,-1) {$c_e$};
  \node[below] at (3,-1) {$a_e$};
  \node[below] at (4,-1) {$p_e$};
  \node[below] at (5,-1) {$q_e$};
  \node[below] at (6,-1) {$b_e$};
  \node[below] at (7,-1) {$d_e$};
  \node[below] at (8,-1) {$w(e)$};

  \node[rotate=63.44, above] at (1.5,1) {slope $s(e)$};
  \node[rotate=-45,   below] at (6.5,2.5) {slope $s(e) - 1$};
  \node[rotate=63.44, above] at (3.5,4) {slope $s(e)$};
\end{tikzpicture}
\caption{The graph of $F_2|_{e}$ for $e \notin T$.}
\label{fig:F2-graphT}
\end{minipage}

\end{figure}


We now want to lift these functions to $\Xan$ by lifting their divisors using Theorem~\ref{lifting thm convenient}.

\begin{proposition} \label{D3 lift}
For every $e$ there exist lifts $a'_e, b'_e \in X(K)$ of $a_e, b_e$ such that
\begin{align*}
D'_3 \coloneqq \sum_{e \in E} a'_e - b'_e
\end{align*}
is a principal divisor on X.
\end{proposition}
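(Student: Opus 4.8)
The plan is to apply Theorem~\ref{lifting thm convenient} directly to the principal divisor $D_3 = \sum_{e \in E} a_e - b_e$ on $\Gamma$. The key point is that the $a_e$ and $b_e$ are $2$-valent points (they lie in the interiors of edges, by construction), so the hypothesis of the lifting theorem about being supported on $2$-valent points is automatically satisfied for whichever $g$ of them we designate as the ``break'' part.

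First I would isolate a break divisor inside the positive part of $D_3$. Recall that $e_1,\dots,e_g$ are the edges forming the complement of the spanning tree $T$, and they were arranged (after subdivision) to be pairwise non-adjacent. Then $B \coloneqq a_{e_1} + \dots + a_{e_g}$ is, by definition, a break divisor: it places one point in the interior of each edge outside a spanning tree, and it is supported on $2$-valent points. So I would relabel the sum $D_3 = \sum_{e \in E}(a_e - b_e)$ so that the first $g$ terms in the positive part are exactly $a_{e_1},\dots,a_{e_g}$, and write $D_3 = \sum_{i=1}^k a_i - \sum_{j=1}^k b_j$ with $k = |E|$, $a_i = a_{e_i}$ for $i \le g$, and the $b_j$ running over all the $b_e$.

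Next I would supply the preimages required by Theorem~\ref{lifting thm convenient}: for every $j = 1,\dots,k$ I need a point $y_j \in X(K)$ with $\tau(y_j) = b_j$, and for $i = g+1,\dots,k$ a point $x_i \in X(K)$ with $\tau(x_i) = a_i$. These exist because $\tau$ maps $X(K)$ onto $\Gamma(\Lambda)$ (the surjectivity of $\tau_*$ recalled in Subsection~\ref{sec:skel-div}), and all the points $a_e, b_e$ are $\Lambda$-rational by construction. I would then invoke Theorem~\ref{lifting thm convenient}, which produces $x_1,\dots,x_g \in X(K)$ lifting $a_{e_1},\dots,a_{e_g}$ such that $\sum_{i=1}^k x_i - \sum_{j=1}^k y_j$ is principal on $X$. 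Setting $a'_e \coloneqq x_i$ when $e = e_i$ ($i \le g$), $a'_e \coloneqq x_i$ for the remaining tree edges, and $b'_e \coloneqq y_j$, we obtain $D'_3 = \sum_{e \in E}(a'_e - b'_e)$ principal.

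The only genuine thing to verify — and the step most likely to need a word of care — is that the hypotheses of Theorem~\ref{lifting thm convenient} are actually met: namely that $D_3$ really is principal on $\Gamma$ (this was asserted in the text when the $D_i$ were introduced, so I may cite it), and that the chosen $g$-term subsum $a_{e_1} + \dots + a_{e_g}$ is a break divisor supported on $2$-valent points. The latter is immediate from the setup: the $e_i$ are the complement of a spanning tree and are pairwise non-adjacent, and each $a_{e_i}$ sits in the interior of $e_i$ away from the other marked points, hence is a $2$-valent point of the relevant graph model. Everything else is a formal application of the lifting theorem, so there is no real obstacle here; this proposition is a warm-up for the more delicate lifts of $F_1$ and $F_2$ to come.
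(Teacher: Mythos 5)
Your proof is correct and is exactly the argument the paper intends: the paper's own proof is the one-line remark that all three lifting propositions follow directly from Theorem~\ref{lifting thm convenient}, and your identification of the break divisor as $a_{e_1}+\dots+a_{e_g}$ (one interior, hence $2$-valent, point on each edge of the complement of the spanning tree) together with arbitrary prescribed preimages of the remaining points is precisely how that theorem is meant to be applied here. No gaps.
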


\begin{proposition} \label{D1 lift}
For every point in $\{ v(e), w(e), p_e, q_e \mid e \in E \}$
there exist a lift in $X(K)$, which we denote by $v(e)', w(e)', p'_e, q'_e$ respectively
such that
\begin{align*}
D'_1 \coloneqq \sum_{e \in E} v(e)' + w(e)' - p'_e - q'_e
\end{align*}
is a principal divisor on $X$.
\end{proposition}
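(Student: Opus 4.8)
The plan is to apply the lifting theorem, Theorem~\ref{lifting thm convenient}, to the principal divisor $D_1$ on $\Gamma$. First I would recall that $D_1 = \sum_{e \in E} \big( v(e) + w(e) - p_e - q_e \big)$ is principal on $\Gamma$ by construction (it is $\divisor(F_1)$). To invoke Theorem~\ref{lifting thm convenient}, I need to single out $g$ of the positive points of $D_1$ that together form a break divisor supported on $2$-valent points, and then I get to choose arbitrary preimages for all remaining points of $D_1$ (positive and negative), after which the theorem supplies preimages of the chosen $g$ points making the total divisor principal on $X$.

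The key step is therefore to exhibit the break divisor. Recall that $e_1,\dots,e_g$ are the edges of $G$ forming the complement of the spanning tree $T$, and that by the standing assumption no two of them share a vertex. Now here is the subtlety: the points $v(e_i), w(e_i)$ are vertices of $G$, hence need not be $2$-valent, so I cannot directly take them as the support of a break divisor. To fix this I would first subdivide each edge $e_i$ by inserting a new $2$-valent vertex $x_i$ in its interior (one can take $x_i$ to be one of the already-chosen interior points, e.g.\ $p_{e_i}$, or a fresh point — any $\Lambda$-rational interior point works), obtaining a refined graph model $G'$ of $\Gamma$ on which $e_i$ is split into two edges. With respect to $G'$, the collection of one of the two halves of each $e_i$, $i=1,\dots,g$, still forms the complement of a spanning tree, so $B \coloneqq x_1 + \dots + x_g$ is a break divisor supported on $2$-valent points of $G'$. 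The remaining task is bookkeeping: since subdivision does not change $\Gamma$, $D_1$ is still a principal divisor on $\Gamma$ with the same support, and I now write $D_1$ in the form required by Theorem~\ref{lifting thm convenient} as $\sum_{i=1}^k a_i - \sum_{j=1}^k b_j$ where $a_1,\dots,a_g$ are the $x_i$ and $a_{g+1},\dots,a_k$ together with the $b_j$ enumerate the points $v(e),w(e),p_e,q_e$ for $e \in E$. (A harmless point-count check: $D_1$ has $2|E|$ positive and $2|E|$ negative points, and $g \le |E|$, so there are indeed enough positive points to extract the $g$ break-divisor points from.)

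Finally I would choose preimages: pick any lifts $v(e)', w(e)' \in X(K)$ of the vertices $v(e), w(e)$ (the deformation retraction $\tau$ is surjective onto $\Gamma(\Lambda)$ by property (3) of the skeleton, so lifts exist), and any lifts $p_e', q_e'$ of $p_e, q_e$ — except that the $g$ points $x_i$ chosen inside $e_i$ must be left for the theorem to lift. If I took $x_i = p_{e_i}$, then it is precisely $p_{e_1}',\dots,p_{e_g}'$ that are produced by the theorem rather than chosen by hand; otherwise all of $p_e', q_e'$ are chosen and the $x_i$ are the auxiliary subdivision points, whose lifts can afterwards simply be discarded (they do not appear in $D_1$). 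Theorem~\ref{lifting thm convenient} then yields lifts $x_1',\dots,x_g'$ so that $D_1' \coloneqq \sum_{e \in E} v(e)' + w(e)' - p_e' - q_e'$ is principal on $X$, which is the claim.

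The main obstacle is the one addressed above — arranging the break-divisor hypothesis, i.e.\ that $g$ of the positive points of $D_1$ can be taken $2$-valent and complementary to a spanning tree — which forces the small subdivision argument. Everything else is a direct application of the lifting theorem together with surjectivity of $\tau$ on $\Lambda$-rational points.
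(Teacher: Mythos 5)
The paper's own proof is the one-line remark that all three lifting propositions ``follow directly from Theorem~\ref{lifting thm convenient}''; the intended application is that $p_{e_1}+\dots+p_{e_g}$ (or the $q_{e_i}$) is already a break divisor supported on $2$-valent points, since each $p_{e_i}$ lies in the interior of one of the edges $e_1,\dots,e_g$ complementary to the spanning tree. One applies the theorem to $-D_1$, whose \emph{positive} part contains these points, chooses arbitrary lifts of everything else, and lets the theorem produce $p_{e_1}',\dots,p_{e_g}'$. Your first variant ($x_i = p_{e_i}$) is essentially this, but your bookkeeping is off by a sign: in Theorem~\ref{lifting thm convenient} the break divisor must consist of positive points $a_1,\dots,a_g$ of the divisor being fed to the theorem, and the lifts you do \emph{not} get to prescribe are exactly the lifts of those $g$ positive points. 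In $D_1$ the points $p_{e_i}$ occur with coefficient $-1$, so the decomposition you write down, with $a_i = p_{e_i}$ for $i \le g$, is not a decomposition of $D_1$. The fix is immediate (apply the theorem to $-D_1$, which is equally principal and has all the $p_e, q_e$ as its positive points), but it should be said.

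The genuine error is your fallback option: taking the $x_i$ to be fresh subdivision points outside the support of $D_1$ and then ``discarding'' their lifts. The break-divisor points must belong to the support of the divisor being lifted; the conclusion of the theorem is that $\sum_{i=1}^k x_i' - \sum_{j=1}^k y_j'$ is principal, where $x_1',\dots,x_g'$ are precisely the lifts of the break-divisor points that the theorem hands back to you. Those $g$ unprescribed lifts are what absorb the obstruction in the tropical Jacobian to lifting the remaining prescribed points; if you could delete them afterwards, you would have shown that \emph{any} choice of lifts of \emph{any} principal divisor on $\Gamma$ is principal on $X$, which is false. Relatedly, the subdivision itself is unnecessary: by the paper's definition a break divisor only requires $x_i \in e_i$ for edges $e_i$ complementary to a spanning tree of \emph{some} graph model, and ``supported on $2$-valent points'' refers to valence in the metric graph $\Gamma$, so the interior points $p_{e_i}$ qualify without refining $G$. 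Your diagnosis that the positive points $v(e), w(e)$ of $D_1$ cannot serve as the break divisor is correct; the resolution is to look at the negative part (i.e.\ at $-D_1$), not to manufacture new support points.
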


\begin{note}
In the previous two propositions, we did not prescribe any lifts for the points
in the support of $D_3$ or $D_1$.
However, in the lifting theorem allows us to prescribe all but $g$ lifts.
In the following proposition we will do just that,
using the full power of Theorem \ref{lifting thm convenient}.
\end{note}

\begin{proposition} \label{D2 lift}
Suppose that for every point in $\{a_e, b_e, v(e), w(e), p_e, q_e \mid e \in E \}$,
we are given lifts $a'_e, b'_e, v(e)', w(e)', p'_e, q'_e \in X(K)$ respectively.
Then for every $i = 1,\dots,g$, there exist lifts $c'_{e_i}$ and $d'_{e_i}$
of $c_{e_i}$ and $d_{e_i}$ such that
\begin{align*}
D_2' \coloneqq \sum_{e \in E} s(e) \left( v(e)' + w(e)' - p_e' - q_e' \right) +
\sum_{i = 1}^g -c_{e_i}' + a_{e_i}' + b_{e_i}' - d_{e_i}'
\end{align*}
is a principal divisor on $X$.
\end{proposition}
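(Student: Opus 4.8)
The plan is to apply Theorem~\ref{lifting thm convenient} to the principal divisor $D_2$ on $\Gamma$, after rewriting it in the form $\sum a_i - \sum b_j$ demanded by that theorem. First I would observe that $D_2$ is indeed principal (it is $\divisor(F_2)$ by construction), so the only real task is bookkeeping: to exhibit a presentation $D_2 = \sum_{i=1}^k a_i - \sum_{j=1}^k b_j$ in which the \emph{first} $g$ of the $a_i$'s form a break divisor supported on $2$-valent points, and in which \emph{all} of the remaining points already have prescribed lifts. The break divisor we want is $c_{e_1} + \dots + c_{e_g}$ (or, equally well, $d_{e_1} + \dots + d_{e_g}$): the points $c_{e_i}$ lie in the interiors of the edges $e_1,\dots,e_g$ that form the complement of a spanning tree of $G$, so by definition this is a break divisor, and by our standing assumption each $c_{e_i}$ is an interior point of its edge, hence $2$-valent. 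The catch is that in $D_2$ the points $c_{e_i}$ appear with coefficient $-1$, i.e.\ on the ``pole'' side, so I would instead feed Theorem~\ref{lifting thm convenient} the divisor $-D_2$, which is still principal, and in which the $c_{e_i}$ now appear with coefficient $+1$.

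Concretely, I would write
\[
-D_2 = \sum_{i=1}^g c_{e_i} \;+\; \sum_{e \in E}\bigl(s(e)\,p_e + s(e)\,q_e\bigr) \;+\; \sum_{i=1}^g d_{e_i}
\;-\; \sum_{e\in E}\bigl(s(e)\,v(e) + s(e)\,w(e)\bigr) \;-\; \sum_{i=1}^g\bigl(a_{e_i} + b_{e_i}\bigr),
\]
so that after listing $c_{e_1},\dots,c_{e_g}$ first, every other point occurring --- namely $p_e, q_e, v(e), w(e)$ for all $e\in E$ and $a_{e_i}, b_{e_i}, d_{e_i}$ for $i=1,\dots,g$ --- is one for which a lift in $X(K)$ has been supplied in the hypothesis (the lifts of $d_{e_i}$ we simply take to be any preimages, or we absorb them among the prescribed data; alternatively one reorders so that the $d_{e_i}$ sit among the first-$g$ slots paired with $c_{e_i}$, but it is cleaner to prescribe them). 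Here one has to be a little careful that the two sides have equal degree, which holds because $D_2$ has degree $0$; and one uses $s(e)$ copies of each of $p_e,q_e,v(e),w(e)$, which is harmless since Theorem~\ref{lifting thm convenient} allows repeated points with prescribed (equal) lifts. Then Theorem~\ref{lifting thm convenient} produces $c'_{e_i} \in X(K)$ with $\tau(c'_{e_i}) = c_{e_i}$ such that
\[
\sum_{i=1}^g c'_{e_i} + \sum_{e\in E} s(e)(p'_e + q'_e) + \sum_{i=1}^g d'_{e_i} - \sum_{e\in E} s(e)(v(e)' + w(e)') - \sum_{i=1}^g (a'_{e_i} + b'_{e_i})
\]
is principal on $X$, i.e.\ $-D'_2$ is principal, hence so is $D'_2$, which is exactly the claim.

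The main obstacle --- and it is a mild one --- is the matching of the hypotheses of Theorem~\ref{lifting thm convenient}: that theorem prescribes lifts for \emph{all} points on the pole side and for all but $g$ of the points on the zero side, so I must arrange that the $g$ unprescribed points are precisely $c_{e_1},\dots,c_{e_g}$ and that they genuinely form a break divisor on $2$-valent points. The break-divisor property is immediate from the way $G$ and the $c_e$ were set up in this section (the $e_i$ are the complement of a spanning tree, no two share a vertex, and each $c_{e_i}$ is interior to $e_i$), and $2$-valence is likewise built in. I would also note, for completeness, that the lifts $p'_e, q'_e, v(e)', w(e)'$ referenced here are exactly those produced in Proposition~\ref{D1 lift} and $a'_e, b'_e$ those from Proposition~\ref{D3 lift}, so that the three propositions assemble into lifts of $F_1, F_2, F_3$ to rational functions on $X$; but strictly speaking the present proposition only needs these as formal symbols with the stated property $\tau(\cdot) = (\cdot)$, and everything else is the direct application of Theorem~\ref{lifting thm convenient} to $-D_2$.
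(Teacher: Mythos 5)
Your proposal is correct and is precisely the instantiation of Theorem~\ref{lifting thm convenient} that the paper intends (its proof of this proposition is the one-line remark that it ``follows directly'' from that theorem): you feed in $-D_2$ with break divisor $c_{e_1}+\dots+c_{e_g}$, prescribe arbitrary preimages for the $d_{e_i}$ and the given lifts for everything else, and let the theorem choose the $c'_{e_i}$. The only slip is the parenthetical ``alternatively one reorders so that the $d_{e_i}$ sit among the first-$g$ slots paired with $c_{e_i}$'' --- that would leave $2g$ unprescribed points where the theorem allows only $g$ --- but since you explicitly discard that option in favour of prescribing the $d_{e_i}$, the argument as written stands.
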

\begin{proof}
All three Propositions follow directly from Theorem~\ref{lifting thm convenient}.
\end{proof}

We let $f_1, f_2, f_3 \in K(X)$ be such that $\divisor(f_i) = D'_i$ so that $\log \vert f_i \vert|_{\Gamma} = D_i$.
Let $U$ be the open set of $X$ obtained by removing all the points
$v'(e)$, $w'(e)$, $a_e'$, $b_e'$, $c_e'$, $d_e'$, $p_e'$, $q_e'$ for each edge $e$.
Then we have the map
\begin{align*}
f \coloneqq (f_1, f_2, f_3) \colon U \to \mb G_m^3.
\end{align*}
For every three-dimensional, proper toric variety $Y$, this map extends
to a morphism
\begin{align*}
\varphi \colon X \to Y.
\end{align*}

\begin{proposition} \label{prop inj implies faithful}
Assume that for a vertex $v$ of $\Sigma(\varphi)$,
the number of adjacent edges is coprime to $\sum_{e : v \in e} s(e)$ and
that $\trop_{\varphi}\!|_{\mathring \Sigma(\varphi)}$ is injective.
Then the tropicalization induced by $\varphi$ is totally faithful.

Similarly, if $\trop_{\varphi}\!|_{\Sigma(\varphi)}$ is injective, then
the tropicalization induced by $\varphi$ is fully faithful.
\end{proposition}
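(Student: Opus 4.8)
The plan is to deduce both statements from the slope/stretching-factor description recalled just before the proposition, namely that the stretching factor of $\trop_\varphi$ on an edge $e$ of $\Sigma(\varphi)$ is the gcd of the slopes of $\log|f_i||_e$ for $i=1,2,3$, together with the fact (cited from \cite[5.6.1]{BPR16}) that $\trop_\varphi$ is fully faithful as soon as $\trop_\varphi|_{\Sigma(\varphi)}$ is injective \emph{and} every stretching factor equals $1$; the totally faithful case is the analogous statement after deleting the infinite vertices. So the real content to prove is: under the stated hypotheses, all stretching factors are $1$. I would first record that $\log|f_i||_\Gamma = F_i$ by construction, and that $\Sigma(\varphi)$ is obtained from $\Gamma$ by attaching the infinite leaf edges at the lifted points $v(e)', w(e)', a_e', \dots$; on such an infinite leaf edge the function $\log|f_i|$ has slope equal to the order of vanishing/pole of $f_i$ at that type-I point, and on the finite edges it is governed by the graphs in Figures~\ref{fig:F1-graph}--\ref{fig:F2-graphT}.

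Next I would handle the finite edges. Fix a finite edge $e$ of $\Sigma(\varphi)$; after the subdivisions introduced by the points $c_e,a_e,p_e,q_e,b_e,d_e$ and their lifts, $e$ decomposes into sub-segments, and on each sub-segment I read off the slopes of $F_1, F_2, F_3$ directly from the figures. The key point is that on a neighbourhood of the midpoint segment $[p_e,q_e]$ the function $F_1$ has slope $\pm 1$ (its divisor $D_1$ is built from $v(e)+w(e)-p_e-q_e$ with unit coefficients), so $\gcd$ of the three slopes there is $1$; on the segments near $a_e, b_e$ the function $F_3$ contributes a slope $\pm1$ coming from $D_3 = \sum a_e - b_e$, again forcing the gcd to be $1$; and on the remaining segments one of $F_1$ or $F_3$ already has a $\pm 1$ slope. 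Thus every sub-segment of every finite edge has stretching factor $1$. (The ``bumps'' in Figure~\ref{fig:F2-graphT} do not interfere: they only affect $F_2$, and $F_1$ or $F_3$ already pins the gcd.)

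Then I would treat the infinite edges, which is where the hypothesis on $s(e)$ enters. An infinite leaf edge of $\Sigma(\varphi)$ is attached at one of the lifted points; at a point $p_e'$ or $q_e'$ the functions $f_1$ and $f_2$ have poles of order $1$ and $s(e)$ respectively while $f_3$ is a unit, so the gcd of slopes is $\gcd(1, s(e)) = 1$; symmetrically at $v(e)', w(e)'$ the slopes are $1$ and $s(e)$. At $a_e', b_e'$ the function $f_3$ has a simple zero/pole, so again the gcd is $1$; at $c_{e_i}', d_{e_i}'$ only $f_2$ is non-unit with slope $\pm1$ from the $-c_{e_i}+a_{e_i}+b_{e_i}-d_{e_i}$ part of $D_2$, so the gcd is $1$. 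Hence all infinite edges also have stretching factor $1$. Combining: once $\trop_\varphi|_{\Sigma(\varphi)}$ is injective, every stretching factor is $1$, so by \cite[5.6.1]{BPR16} / the discussion in Section~\ref{sec:ff-and-smooth} the tropicalization is fully faithful; and if instead only $\trop_\varphi|_{\mathring\Sigma(\varphi)}$ is injective, the same slope computation on the finite edges gives total faithfulness — here the coprimality of $\deg v$ with $\sum_{e\ni v} s(e)$ is what guarantees the stretching factors near the vertex $v$ are $1$ even when we cannot see the infinite edges, since the relevant slopes of $F_2$ at $v$ add up through the $s(e)$-weighted term.

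The step I expect to be the main obstacle is the bookkeeping of slopes of $F_2$ near the vertices in the totally faithful case: unlike $F_1$ and $F_3$, whose $\pm1$ slopes make the gcd argument immediate, $F_2$ carries the $s(e)$ weights, and one must check that the coprimality hypothesis is exactly what is needed to conclude the gcd is $1$ on the edge-germs at $v$ when the infinite edges are removed — i.e.\ that the slope of $F_2$ entering $v$ along $e$ is (up to sign) $s(e)$ plus a contribution of absolute value $1$ from $D_1$-type terms, and that the gcd over the edges at $v$ of these, together with the combinatorial relation at $v$, forces $1$ precisely when $\deg(v)$ and $\sum_{e\ni v} s(e)$ are coprime. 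Everything else is a finite case check against the four figures.
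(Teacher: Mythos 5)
Your overall strategy is the same as the paper's: reduce both claims to checking that the gcd of the slopes of $\log|f_1|,\log|f_2|,\log|f_3|$ equals $1$ on every domain of linearity of $\Sigma(\varphi)$, verify this on the finite edges from Figures~\ref{fig:F1-graph}--\ref{fig:F2-graphT}, and verify it on the infinite rays from the coefficients of the divisors $D_i'$ at the corresponding type~I points. Your finite-edge bookkeeping is correct.

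The one genuine error is in your treatment of the rays attached at the vertices of $G$, which is exactly where the coprimality hypothesis lives. You claim that ``at $v(e)', w(e)'$ the slopes are $1$ and $s(e)$.'' This is false: by Proposition~\ref{D1 lift} a vertex $v$ has a \emph{single} lift $v'$, reused for every edge adjacent to $v$ (the paper stresses that this reuse is what keeps $\Sigma(\varphi)$ from acquiring several rays at one point), so $v'$ appears in $\divisor(f_1)=D_1'$ with multiplicity $\deg(v)$ and in $\divisor(f_2)=D_2'$ with multiplicity $\sum_{e\ni v}s(e)$, while $f_3$ is a unit there. Hence the slope vector along the ray $[v,v')$ is $\bigl(\deg(v),\sum_{e\ni v}s(e),0\bigr)$ up to sign (cf.\ the last row of Table~\ref{tab:rays}), and its stretching factor is $1$ precisely when $\deg(v)$ and $\sum_{e\ni v}s(e)$ are coprime. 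Your closing paragraph then misplaces the role of this hypothesis: it is not about the finite edge-germs at $v$ (where $F_1$ already has slope $\pm 1$, so the gcd is $1$ unconditionally), nor about ``not seeing the infinite edges'' --- the ray $[v,v')$ minus its type~I endpoint lies in $\mathring\Sigma(\varphi)$, so its stretching factor matters for total faithfulness as well. With the vertex rays corrected, your argument coincides with the paper's proof.
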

\begin{proof}
We have to check that for each domain of linearity of the functions $\log \vert f_i \vert$,
the gcd of their slopes is equal to $1$.
The extended skeleton $\Sigma$ associated to $\varphi$ is given by taking $\Gamma$
and at each point $c_e, a_e, p_e, q_e, b_e, d_e$ adding a ray $[c_e, c'_e)$ and so on.
Note that here it is crucial that we were able to select the points we obtained in Proposition~\ref{D3 lift} and
Proposition~\ref{D1 lift} and reuse them in Proposition~\ref{D2 lift},
otherwise we would have to potentially add multiple edges.

On the finite edges we have $\log \vert f_i \vert = F_i$, so this can be checked directly
(c.f. Figures \ref{fig:F1-graph}, \ref{fig:F3-graph}, \ref{fig:F2-graphT'}
and \ref{fig:F2-graphT}.
).

On an infinite edge, $e$, the slope of $\log \vert f_i \vert$ is the coefficient of $D_i$ at
the finite endpoint of $e$.
So again this can be checked case by case.
\end{proof}

\section{The right choice of parameters}\label{sec:params}

We now describe conditions on the parameters for which, as we will show in the next section,
the tropicalization map induced by $(f_1,f_2,f_3)$ will be fully faithful.

By parameters, we mean: a subdivision of the skeleton $\Gamma$ of $\Xan$ that is suitable,
the distance of the points $c_e, a_e, p_e, q_e, b_e$ and $d_e$ from the vertices
as well as the values $r(v)$ for each vertex $v$ and $s(e)$ for each edge $e$.

\subsection{Interval condition}\label{sec:interval-cond}
Except for the symmetry of the pairs $c_e, d_e$, $a_e, b_e$ and $p_e, q_e$ about their edge's midpoint,
we have complete freedom on where we choose these points on the interior of each edge.
The arrangement of these points is pictured in Figure~\ref{fig:interval} which we will now describe.

Map each edge $e$ to the real line so that it has one of its vertices, $v(e)$,
at $0$ and the other vertex, $w(e)$ at $\ell(e) = $ the length of $e$.

Then, we require that the points $v(e), c_e, a_e, p_e$
can be grouped into disjoint intervals according to what kind of point they are.
Namely, every point $c_e$ should lie to the left of any point $a_{e'}$, should lie to the left of any point $p_{e''}$. The most restrictive requirement is that we want a point $p_e$ to be to the left of the midpoint of any other edge.

We require that symmetric conditions hold if all the edges are right-aligned at their vertex $w(e)$. That is, $q_e$ should be to the right of every midpoint and every point $b_{e'}$ should be to the right of $q_e$ and every point $d_{e''}$ should be to the right of $b_{e'}$.

We will call this requirement on the arrangement of the points, the \emph{interval condition}.

\begin{figure}[htbp]
  \centering
  \begin{tikzpicture}[scale=0.95]
    \draw (-0.3,0) -- (12.3,0);
    \draw (0,0.25) -- (0,-0.25) node[below] {$v(e)$};

    \draw(-0.3,-1) -- (12.3,-1);
    \draw (12,-0.75) -- (12,-1.25) node[below] {$w(e)$};

    \draw (5.8,0.25) rectangle (6.2,-1.25);
    \node[above] at (6,0.25) {midpoints};

    \foreach \x/\xtext in {1/$c_e$, 3/$a_e$, 5/$p_e$} {
      \draw [{Arc Barb[width=5mm]}-{Arc Barb[width=5mm]}] (\x-0.6,0) -- (\x+0.6,0) {};
      \node[below] at (\x,-0.25) {\xtext};
    }

    \foreach \x/\xtext in {7/$q_e$, 9/$b_e$, 11/$d_e$} {
      \draw [{Arc Barb[width=5mm]}-{Arc Barb[width=5mm]}] (\x-0.6,-1) -- (\x+0.6,-1) {};
      \node[below] at (\x,-1.25) {\xtext};
    }
  \end{tikzpicture}
  \caption{Where the points lie on the real line.}
  \label{fig:interval}
\end{figure}
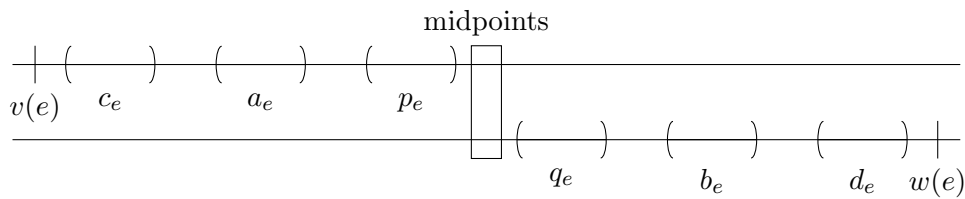

\subsection{Conditions for \texorpdfstring{$r(v)$}{r(v)}}

We now describe conditions for the constants $r(v)$ that will be the values of $F_3$ at the vertices $v$
(i.e.\ $r(v) = F_3(v)$). These constants are related to the points $a_e$ and $b_e$ by
\begin{align*}
 \dist_e(a_e, b_e) = \abs{r(w) - r(v)}
\end{align*}
for an edge $e = vw$.

As such, we require that $\abs{r(w) - r(v)}$ is strictly smaller than the length of $vw$.
By convention, we will write $v(e)$ for the vertex of $e$ with the smaller value of $r$
and $w(e)$ for the larger value.

We also require two additional properties for the values of $r$:

\begin{enumerate}[label=(R\arabic*)]
  \item \label{R:distinct} $r(v)$ is distinct for each $v \in V(G)$.
  \item \label{R:A-distinct} The distances $\dist(a_e,v(e)) = \dist(b_e,w(e)) = F_1(a_e)$ are distinct for each $e \in E(G)$.
\end{enumerate}

\subsection{Further requirements on locations} \label{sec:distinctness}
In addition to having distinct values of $F_1$ for $a_e$, we require the following conditions:

\begin{itemize}
  \item for each edge $e \notin T$, the points $c_{e}$ to be chosen such that the distances
  $\dist_{e}(v(e), c_{e})) = F_1(c_e)$ are all distinct,
  \item and, for each edge $e$, we require the points $p_e$ to be chosen
  such that the values of $F_3(p_e) = r(v(e)) + \dist_e(p_e, a_e)$ are all distinct,
  \item and, for each edge $e$, we require that the points $q_e$ are chosen such that the values of $F_3(q_e) = r(w(e)) - \dist_e(q_e,b_e)$ are all distinct,
  \item and finally, we require that $F_3(p_e) \ne F_3(q_{e'})$ for any $e, e' \in E$.
\end{itemize}

\begin{note}
  These conditions do not impose a significant restriction because: the points are to be chosen from an interval, the $\Lambda$-rational points are dense,
  and there are only finitely many choices to avoid.
\end{note}

\begin{definition}
For each edge $e$, $\vf_e \colon e \to [0,\ell(e)]$ will denote the isometry with $\vf_e(v(e)) = 0$ and
$\vf_e(w(e)) = \ell(e)$. If $x \in \Gamma$ is not a vertex then it is contained in a unique edge $e$, and we will write $\vf(x)$ for $\vf_e(x)$.
\end{definition}

\subsection{Conditions for \texorpdfstring{$s(e)$}{s(e)}}

Recall that to define $F_2$ we have to choose for each edge, $e$, an integer $s(e) > 1$.
We require that these integers satisfy the following conditions
\begin{enumerate}[label=(S\arabic*)]
  \item \label{S:distinct} For every edge $e$, the integers $s(e)$ are all distinct.
  \item \label{S:hordistinct} For every edge $e$, the value of $F_2$ on the interval $[p_e, q_e]$, is distinct.
  \item \label{S:c} For any $e \in T$, $e' \notin T$ and any $x \in e$ we have $F_2(x) < F_2(c_{e'})$.
    Furthermore, the distance between $F_2(p_e) = \max F_2|_e$ and $F_2(c_{e'})$ exceeds (strictly)
    \[
      \max_{y \in \Gamma} F_3(y) - \min_{y \in \Gamma} F_3(y) = \max_{v \in V(G)} r(v) - \min_{v \in V(G)} r(v).
    \]
  \item \label{S:disjoint} For every edge $e \notin T$,
    the intervals $[F_2(c_{e}), F_2(p_{e})] \subseteq \mb R$ are disjoint.
    Again, the distance between these intervals should be large in the same sense as \ref{S:c}.
    Namely, if $F_2(p_{e}) < F_2(c_{e'})$ for a different edge $e' \notin T$ then
    \[
      F_2(c_{e'}) - F_2(p_{e}) > \max_{y \in \Gamma} F_3(y) - \min_{y \in \Gamma} F_3(y).
    \]

\end{enumerate}

\begin{note}
  Figure~\ref{fig:ae-ray-gap} on page~\pageref{fig:ae-ray-gap} shows what \ref{S:c} and \ref{S:disjoint} are designed to accomplish.
\end{note}

\begin{enumerate}[label=(S\arabic*)] \setcounter{enumi}{4}
  \item \label{S:F2lambda} For all $e \in T$ and $e' \notin T$. If $x \in e'$ with $\vf(p_e) \le \vf(x) \le \vf(q_e)$ then
  $F_2(x) > F_2(p_e) + s(e)\lambda$ for any $\lambda \le \max F_3 - \min F_3$.
\end{enumerate}

\begin{note}
  The idea is that $F_2(x) \approx F_2(p_{e'})$ and \[ F_2(p_{e'}) \approx s(e') F_1(p_{e'}) \gg s(e)F_1(p_e) = F_2(p_e). \]
  This is to get around the fact that $F_2|_{e'}$ is not simply equal to $s(e')F_1|_{e'}$ as is the case in the construction of Baker and Rabinoff~\cite[Theorem 8.2]{BR}.
\end{note}

\begin{enumerate}[label=(S\arabic*)] \setcounter{enumi}{5}
  \item \label{S:coprime} For each $v \in V$, $\deg(v)$ is coprime to $\sum_{e \ni v} s(e)$.
\end{enumerate}

\section{Injectivity} \label{sec:injective}

In this section we continue with the notation from the previous section.
Let $X$ be a Mumford curve with a finite skeleton $\Gamma$ and a graph model $(V, E)$,
Assume that for each $e \in E$, we have chosen points
$c_e,a_e, p_e, q_e, b_e, d_e$ satisfying the interval condition.
Let $Y$ be a proper toric variety of dimension $3$, and
$\varphi \colon X \to Y$ the morphism that
is, on the dense torus, given by the functions $f_1,f_2,f_3$ constructed in Section \ref{sec:construction}.

Again, $F_1, F_2, F_3$ are piecewise linear functions with $F_i = \log |f_i|$. For convenience, we will choose $F_1$ and $F_2$ to take the value $0$ at any vertex in $V$.

\begin{proposition} \label{prop choosing parameters}
Let points be chosen on each edge satisfying the interval condition.
Choose parameters $r(v)$ and $s(e)$ satisfying \ref{R:distinct} and \ref{R:A-distinct} and
\ref{S:distinct}--\ref{S:coprime}.
Then the map $\trop_{\varphi}\!|_{\mathring \Sigma} \colon \mathring \Sigma \to  \mb R^3$ is injective.
\end{proposition}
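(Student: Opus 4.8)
The plan is to show that the map $\trop_\varphi|_{\mathring\Sigma}$ separates points by a ``coordinate by coordinate'' argument, using the three functions $F_1, F_2, F_3$ in a prioritized order, together with the distinctness hypotheses \ref{R:distinct}, \ref{R:A-distinct} and \ref{S:distinct}--\ref{S:coprime}. The open skeleton $\mathring\Sigma$ consists of the finite skeleton $\Gamma$ together with, for each edge $e$, open rays emanating from each of the six points $c_e, a_e, p_e, q_e, b_e, d_e$. On the finite part the relevant data is $(F_1(x), F_2(x), F_3(x))$, and on a ray attached at a point $z$ the three functions are affine with slopes equal to the local coefficients of $D_1, D_2, D_3$ at $z$; so the first thing I would do is record, from the explicit divisors and from Figures~\ref{fig:F1-graph}--\ref{fig:F2-graphT}, exactly which rays occur and what $(F_1, F_2, F_3)$ looks like along each. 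The key structural point (already used in Proposition~\ref{prop inj implies faithful}) is that the rays we added are exactly one per special point, so there is no ambiguity there.

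Next I would stratify the argument by which ``column'' of the interval picture (Figure~\ref{fig:interval}) a point lives in. First I would use $F_3$: by \ref{R:distinct} the vertices have pairwise distinct $F_3$-values, and the further distinctness requirements of Section~\ref{sec:distinctness} guarantee that the values $F_3(a_e) = r(v(e))$-region, $F_3(p_e)$, $F_3(q_e)$ are all distinct across edges and from each other (recall $F_3(p_e)\ne F_3(q_{e'})$ was imposed explicitly). So $F_3$ already isolates, up to finitely many fibers, which edge and which ``block'' of Figure~\ref{fig:interval} a point of $\Gamma$ sits in, and it also separates the $a$-, $p$-, $q$-rays from one another since along those rays $F_3$ changes while pinned to these distinct base values. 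Within a single edge $e$, $F_3|_e$ is strictly monotone between $a_e$ and $b_e$ and locally controls position, so $F_3$ plus knowledge of the edge nails down the point on the central part of $e$.

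Then I would bring in $F_1$ and $F_2$ to handle the two remaining difficulties: (i) distinguishing the ``outer'' points $c_e, d_e$ and their rays, where $F_3$ is constant and equal to a vertex value, and (ii) distinguishing two points on two different edges that happen to share an $F_3$-value near a vertex. For (i) one uses \ref{R:A-distinct} and the first bullet of Section~\ref{sec:distinctness}: the values $F_1(a_e)$, and $F_1(c_e)$ for $e\notin T$, are all distinct, and along a $c_e$- or $d_e$-ray $F_1$ is non-constant, so $F_1$ (together with the already-known $F_3$-value) separates these. For (ii), and more generally to handle collisions between points lying ``over a vertex'' $v$, I would invoke the coprimality condition \ref{S:coprime} together with the balancing/slope bookkeeping: the edges at $v$ carry stretching data governed by $\sum_{e\ni v}s(e)$, and \ref{S:distinct}, \ref{S:hordistinct}, \ref{S:c}, \ref{S:disjoint}, \ref{S:F2lambda} are exactly engineered so that the $F_2$-values on the intervals $[p_e,q_e]$, the ``gaps'' around $F_2(c_{e'})$, and the ray directions never coincide in a way that $F_1$ and $F_3$ cannot already tell apart. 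Concretely, \ref{S:c} and \ref{S:disjoint} make the $F_2$-gaps strictly larger than the full oscillation of $F_3$, which prevents an $F_2$-value achieved near a $c$-point on one edge from being matched by an $F_2$-value achieved in a $[p,q]$-window on another; and \ref{S:F2lambda} handles the fact that $F_2|_{e'}$ is not literally $s(e')F_1|_{e'}$.

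The main obstacle I expect is (ii): carefully enumerating, over all pairs of points of $\mathring\Sigma$, the cases where $F_3$ fails to separate — essentially points sitting in the same ``column'' of Figure~\ref{fig:interval} on possibly different edges, and points on the finite skeleton versus points on rays — and checking in each such case that one of $F_1$, $F_2$ (using precisely the inequality reserved for that case, e.g.\ \ref{S:disjoint} vs.\ \ref{S:F2lambda} vs.\ \ref{R:A-distinct}) does the job, while simultaneously keeping track of slopes along rays so that a point on a ray is never confused with a point on the finite skeleton at the same triple of values. This is a finite but somewhat intricate case analysis; the conditions of Section~\ref{sec:params} were clearly set up to make each case go through, so the work is in organizing it rather than in any single estimate. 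I would present it as: reduce to a fixed edge-or-ray locus using $F_3$, then on each such locus observe $F_1$ or $F_2$ is strictly monotone or takes a distinguished value, and conclude injectivity.
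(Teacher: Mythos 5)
Your overall strategy (separate points coordinate by coordinate, stratify by the blocks of Figure~\ref{fig:interval}, treat the finite skeleton and the rays separately) matches the spirit of the paper's proof, but the priority order you propose does not work, and the step where the real difficulty lives is missing. You lead with $F_3$ and claim it ``isolates \dots which edge and which block'' a point of $\Gamma$ sits in. That is false: on an edge $e$, $F_3$ takes every value in the interval $[r(v(e)),r(w(e))]$, and nothing in \ref{R:distinct}, \ref{R:A-distinct} or Section~\ref{sec:distinctness} makes these intervals disjoint for distinct edges (for a connected graph they typically overlap heavily). The conditions only force distinctness of finitely many \emph{specific} values $r(v)$, $F_3(p_e)$, $F_3(q_e)$; a generic fiber of $F_3|_\Gamma$ still contains one point from every edge whose $r$-interval contains that value. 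So ``reduce to a fixed edge-or-ray locus using $F_3$'' cannot be the first step. (Relatedly, $F_3$ is not ``pinned'' along the $a$- and $b$-rays: for $e\in T$ the $a$-ray has direction $(0,0,-1)$, so its $F_3$-values sweep an entire half-line and can collide with $F_3$-values all over the skeleton; those rays are separated from everything else by the \emph{first} coordinate, not the third.)

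The step you are missing is exactly the paper's Lemma~\ref{lem:F1-F2}: on the outer portions of each edge one has $F_1(x)=\dist(v(e),x)$ and $F_2(x)=s(e)F_1(x)$ (with \ref{S:c}, \ref{S:disjoint}, \ref{S:F2lambda} used precisely to reduce to this regime when an edge outside $T$ is involved), so $F_1(x)=F_1(y)$ and $F_2(x)=F_2(y)$ force $s(e_1)=s(e_2)$ and hence $e_1=e_2$ by \ref{S:distinct}. This is what pins down the edge, up to reflection about the midpoint and up to the plateau $[p_e,q_e]$; only then does $F_3$ enter, to kill the reflection (it is antisymmetric on each edge) and to be injective on $[p_e,q_e]$. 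In your sketch this mechanism is replaced by an appeal to \ref{S:coprime} and ``balancing/slope bookkeeping,'' but \ref{S:coprime} plays no role in injectivity at all --- it is only used in Proposition~\ref{prop inj implies faithful} to make the stretching factor on the vertex rays equal to $1$. Without Lemma~\ref{lem:F1-F2} (or an equivalent), your case (ii) is not a finite bookkeeping exercise but the entire content of the proposition, so as written the proposal has a genuine gap.
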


The proof of this proposition is broken up into several lemmas. In each, we assume the conditions of Proposition~\ref{prop choosing parameters} hold.

\begin{lemma} \label{lem:F1-F2}
Suppose that $x, y \in \Gamma \setminus V$ such that $F_1(x) = F_1(y)$ and $F_2(x) = F_2(y)$.
Then $x$ and $y$ are contained in the same edge $e$ of $\Gamma$ and one of the following holds
\begin{enumerate}
\item $x = y$,
\item $x$ is the reflection of $y$ about the middle of $e$,
\item $x, y \in [p_e, q_e]$.
\end{enumerate}
\end{lemma}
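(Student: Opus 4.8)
The plan is to analyze the level sets of the pair $(F_1,F_2)$ edge by edge, exploiting the very explicit shapes of the graphs in Figures~\ref{fig:F1-graph}--\ref{fig:F2-graphT} together with the distinctness conditions \ref{R:A-distinct}, \ref{S:distinct}, \ref{S:hordistinct}, \ref{S:c}, \ref{S:disjoint}. The function $F_1$ has divisor $D_1 = \sum_e v(e)+w(e)-p_e-q_e$: on each edge $e$ it rises linearly from $0$ at $v(e)$ with slope $1$ until $p_e$, is constant on $[p_e,q_e]$ at its maximum value $F_1(p_e) = \vf(p_e)$, and then descends with slope $-1$ to $0$ at $w(e)$; in particular $F_1 \ge 0$ everywhere, $F_1$ vanishes only at vertices, and on $e\setminus V$ the value $F_1(x)$ together with the sign of the slope determines $x$ uniquely unless $x$ lies in the flat part $[p_e,q_e]$ or $x$ is the reflection of another point across the midpoint.

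\textbf{Step 1: reduce to a single edge.} First I would show that if $F_1(x)=F_1(y)=:t>0$ and $F_2(x)=F_2(y)$ with $x,y$ on edges $e,e'$ respectively, then $e=e'$. On the ascending part of $e$, $F_1(x)=\dist_e(v(e),x)$, and by the interval condition such $x$ lies in the $c$-, $a$-, or $p$-interval; on $e$ the value $F_1(x)$ takes any value in $(0,\vf(p_e)]$, but the key leverage is the list of distinctness hypotheses. If $x$ lies at the generic point of the $a$-interval, i.e.\ between $c$-points and the $p$-interval, then $F_1(x)$ pins down the edge via \ref{R:A-distinct} provided $x$ is exactly a point of the form $a_e$; more generally I need the sharper observation that the triple $(F_1,F_2)$ separates edges. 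The cleanest route: on $[0,\vf(p_e)]$ (the ascending part) $F_2$ is governed by $D_2$, and by \ref{S:c}, \ref{S:disjoint} the ranges of $F_2$ over the various ascending parts, outside the $[p_e,q_e]$ flats, occupy disjoint bands of $\R$, separated by gaps exceeding $\max F_3 - \min F_3$. Hence $F_2(x)=F_2(y)$ already forces $x,y$ to lie over the same edge or both in some flat $[p_e,q_e]$. A symmetric argument using the right-aligned picture handles the descending parts and the $q$-, $b$-, $d$-intervals, and \ref{S:hordistinct} (the value of $F_2$ on $[p_e,q_e]$ is distinct across $e$) handles the flat parts. Conclusion of Step 1: $x,y$ lie in the same edge $e$.

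\textbf{Step 2: within a single edge.} Now fix $e$ and suppose $x,y\in e\setminus V$ with $F_1(x)=F_1(y)$, $F_2(x)=F_2(y)$. Using the explicit graph of $F_1|_e$: either both $x,y$ lie in the flat $[p_e,q_e]$ — case~(3) — or at least one of them, say $x$, lies on the strictly ascending or strictly descending part. If $x$ is on the ascending part ($\vf(x)<\vf(p_e)$), then $F_1(x)<\vf(p_e)$ forces $y$ also off the flat, so $y$ is on the ascending or descending part; since $F_1$ is injective with slope $+1$ on the ascending part and the descending part is its mirror image, either $y=x$ (case~1) or $y$ is the reflection of $x$ (case~2). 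It remains to rule out the mixed possibility where $x$ and $y$ are reflections but do \emph{not} have equal $F_2$ — but that is automatic since we assumed $F_2(x)=F_2(y)$, and conversely if $x=y$ we are in case~(1). The one genuinely delicate point is to make sure Step~1's band-separation argument is applied at the right granularity: the bumps in Figure~\ref{fig:F2-graphT} mean $F_2|_e$ for $e\notin T$ is not monotone on the ascending part, so I must phrase \ref{S:disjoint} as separating the \emph{intervals} $[F_2(c_e),F_2(p_e)]$, which contain the whole range of $F_2$ on the ascending part outside the flat, and then $F_2(x)$ landing in a given band identifies the edge.

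\textbf{Main obstacle.} I expect the hardest part to be Step~1, specifically the bookkeeping that the bumps in $F_2|_{e'}$ (for $e'\notin T$) do not cause the $F_2$-ranges of different ascending parts to overlap. This is exactly what conditions \ref{S:c}, \ref{S:disjoint}, \ref{S:F2lambda} are engineered for, and the argument will consist of carefully reading off from the divisor $D_2$ and the interval condition that $F_2$ on the ascending part of $e$ stays within $[F_2(c_e), F_2(p_e)]$ (up to the harmless order-$\lambda$ wiggle controlled by \ref{S:F2lambda}), combined with the gap estimates. Once the range-separation is in place, Steps~1 and~2 are routine consequences of the piecewise-linear shapes in the figures.
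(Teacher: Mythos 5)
Your Step 2 is fine, but Step 1 contains a genuine gap: the ``band-separation'' claim is false. You assert that by \ref{S:c} and \ref{S:disjoint} the ranges of $F_2$ over the ascending parts of distinct edges occupy disjoint bands of $\R$, so that $F_2(x)=F_2(y)$ alone pins down the edge. But $F_2$ is normalized to vanish at every vertex, so on the ascending part of \emph{every} edge $e$ the range of $F_2$ is the full interval $[0,F_2(p_e)]$; in particular the ranges for any two distinct tree edges overlap near $0$, as does the range of a tree edge with the initial segment $[v(e'),c_{e'}]$ of a non-tree edge. Condition \ref{S:c} only separates tree edges from the portion $[c_{e'},p_{e'}]$ of non-tree edges, and \ref{S:disjoint} only separates the intervals $[F_2(c_e),F_2(p_e)]$ for distinct non-tree edges; neither separates two tree edges. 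So there genuinely exist $x\in e_1$, $y\in e_2$ with $e_1\neq e_2$ both in $T$ and $F_2(x)=F_2(y)$, and your argument cannot rule this out.

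The missing idea is that you must use $F_1$ and $F_2$ \emph{together with \ref{S:distinct}}: after the reflections and the replacement of points in $[p_e,q_e]$ by $p_e$, conditions \ref{S:disjoint} and \ref{S:F2lambda} reduce one to the case where both points lie in segments on which $F_2|_e = s(e)F_1|_e$. Then $F_1(x)=F_1(y)$ and $F_2(x)=F_2(y)$ give $s(e_1)F_1(x)=s(e_2)F_1(y)$ with $F_1(x)=F_1(y)\neq 0$, hence $s(e_1)=s(e_2)$, and \ref{S:distinct} (which you list but never invoke) forces $e_1=e_2$. This proportionality-of-slopes argument is what actually distinguishes edges; the gap estimates involving $\max F_3 - \min F_3$ that you lean on are needed elsewhere (for the ray analysis), not here.
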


\begin{proof}
By reflecting $x$ or $y$ about the middle of their respective edges $e_1$ and $e_2$ if necessary,
we may assume that $v(e_1)$ and $v(e_2)$ are the respective closest vertices.
Further, if $x$ is contained in $[p_{e_1}, q_{e_1}]$, we may replace it by $p_{e_1}$ and the same
goes for $y$ and $p_{e_2}$.

Now we have to show that after these replacements, we have $x = y$.
First observe that \ref{S:disjoint} and \ref{S:F2lambda} imply that if at least one of $e_1, e_2$ is not in $T$, then $F_2(x) = F_2(y)$ imply that either $e_1 = e_2$ (in which case $F_1(x) = F_1(y)$ implies $x = y$) or both $\varphi(x) < \varphi(c_{e_1})$ and $\varphi(y) < \varphi(c_{e_2})$---which is the interval on which $F_2|_e = s(e)F_1|_e$ regardless of whether $e \in T$ or not.

And now we have \[ \dist_{e_1}(v(e_1), x) = F_1(x) = F_1(y) = \dist_{e_2}(v(e_2), x) \] and
\[ s(e_1) \dist_{e_1}(v(e_1), x) = F_2(x) = F_2(y) = s(e_2) \dist_{e_2}(v(e_2), x). \]
It follows from these equations that $s(e_1) = s(e_2)$ and thus $e_1 = e_2$.
Then the first equation implies $x = y$.
\end{proof}

\begin{lemma}   \label{lem:injective-on-Gamma}
The map $F|_{\Gamma} \colon \Gamma \to \mb R^3$ is injective.
\end{lemma}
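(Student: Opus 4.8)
The plan is to promote the two-coordinate statement of Lemma~\ref{lem:F1-F2} to a three-coordinate injectivity statement by using $F_3$ to separate the finitely many pairs of points that $(F_1,F_2)$ fails to distinguish. Suppose $x,y\in\Gamma$ with $F(x)=F(y)$. If either point is a vertex we first dispose of that case: by our normalization $F_1$ and $F_2$ vanish on $V$, so $F(x)=F(y)$ with $x\in V$ forces $F_1(y)=F_2(y)=0$; I would check from the graphs of $F_1$ and $F_2$ (Figures~\ref{fig:F1-graph}--\ref{fig:F2-graphT}) that the only points of $\Gamma$ where both $F_1$ and $F_2$ vanish are the vertices themselves, and then $F_3(x)=F_3(y)=r(\cdot)$ together with \ref{R:distinct} gives $x=y$. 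So assume $x,y\in\Gamma\setminus V$ and apply Lemma~\ref{lem:F1-F2}: $x,y$ lie on a common edge $e$ and we are in case (1), (2), or (3).

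Case (1) is $x=y$, nothing to do. In case (3), $x,y\in[p_e,q_e]$; here $F_2$ is injective on $[p_e,q_e]$ by \ref{S:hordistinct}, so $F_2(x)=F_2(y)$ already forces $x=y$. The real work is case (2): $x$ and $y$ are reflections of each other about the midpoint of $e$, say $\vf_e(x)=t$ and $\vf_e(y)=\ell(e)-t$ with $0<t<\ell(e)/2$. Now I use $F_3$. Writing $e=v(e)w(e)$ with $r(v(e))<r(w(e))$, the function $F_3$ on $e$ is constant $=r(v(e))$ to the left of $a_e$, has slope $-1$ (or the appropriate sign from $\divisor(F_3)=\sum a_e-b_e$ and Figure~\ref{fig:F3-graph}) between $a_e$ and $b_e$, and is constant $=r(w(e))$ to the right of $b_e$; since $r(v(e))\ne r(w(e))$ by \ref{R:distinct}, the values $F_3$ takes on the left half and on the right half of $e$ are disjoint except possibly on the slanted middle portion. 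Concretely: if $t<\vf(a_e)$ then $F_3(x)=r(v(e))\ne r(w(e))=F_3(y)$ unless $\ell(e)-t<\vf(b_e)$, but $\vf(a_e)+\vf(b_e)=\ell(e)$ by the symmetry of the pair $a_e,b_e$, so $t<\vf(a_e)$ and $\ell(e)-t<\vf(b_e)=\ell(e)-\vf(a_e)$ are contradictory; hence $F_3(x)\ne F_3(y)$, contradiction. The remaining subcase is $\vf(a_e)\le t\le \ell(e)/2$, so both $x$ and $y$ lie in the slanted middle segment $[a_e,b_e]$ where $F_3$ is strictly monotone; then $F_3(x)=F_3(y)$ immediately forces $x=y$, i.e.\ $t=\ell(e)-t$, contradicting $t<\ell(e)/2$. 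In every subcase of case~(2) we reach a contradiction, so case~(2) cannot occur, and $x=y$ throughout.

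I expect the main obstacle to be bookkeeping the interplay between the slanted middle portion $[a_e,b_e]$ of $F_3$ and the reflection symmetry: one must make sure that a reflected pair $x,y$ can agree in the $F_3$-coordinate only when both lie in $[a_e,b_e]$, and there strict monotonicity of $F_3$ on $[a_e,b_e]$ closes the argument. The symmetry $\vf(a_e)=\ell(e)-\vf(b_e)$ (imposed in Section~\ref{sec:construction}) is exactly what makes the "one point in a flat part, its mirror in a flat part" configuration impossible unless the two flat parts have the same $F_3$-value, which \ref{R:distinct} forbids. One should also double-check that the slope of $F_3$ on $[a_e,b_e]$ is $\pm1$ so that $F_3$ is genuinely strictly monotone there, which follows from $\divisor(F_3)=\sum_{e}a_e-b_e$ having coefficient $+1$ at $a_e$ and $-1$ at $b_e$ and $F_3$ being constant outside $\bigcup_e[a_e,b_e]$, together with the relation $\dist_e(a_e,b_e)=|r(w)-r(v)|$.
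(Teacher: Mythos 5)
Your overall strategy is the paper's: dispose of vertices via \ref{R:distinct}, reduce to Lemma~\ref{lem:F1-F2}, and use $F_3$ to eliminate the reflection case. Your treatment of case (2) is a more explicit (and correct) version of the paper's one-line appeal to the antisymmetry of $F_3$ on each edge. However, your case (3) contains a genuine error: $F_2$ is \emph{not} injective on $[p_e,q_e]$ --- it is constant there. The divisor $D_2$ places weight $-s(e)$ at both $p_e$ and $q_e$, so $F_2|_e$ rises with slope $s(e)$ up to $p_e$, has slope $0$ on $[p_e,q_e]$, and falls after $q_e$ (the extra bumps coming from $\pm c_{e}, \pm a_{e}, \pm b_e, \pm d_e$ on edges outside $T$ all lie outside $[p_e,q_e]$ by the interval condition). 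Condition \ref{S:hordistinct} asserts only that this single plateau value is distinct for different edges $e$; it provides no separation of points within one interval $[p_e,q_e]$. So in case (3) the equality $F_2(x)=F_2(y)$ gives you nothing.

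The fix is immediate and is exactly what the paper does: use $F_3$ instead. By the interval condition $[p_e,q_e]\subseteq[a_e,b_e]$, and $F_3$ is strictly monotone of slope $\pm 1$ on $[a_e,b_e]$ --- a fact you yourself establish while handling case (2) --- so $F_3(x)=F_3(y)$ forces $x=y$ for $x,y\in[p_e,q_e]$. With that one substitution your argument is complete and coincides with the paper's proof.
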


\begin{proof}
Suppose $x, y \in \Gamma$ and $F(x) = F(y)$.
If $F_1(x) = F_1(y) = 0$
then $x$ and $y$ are vertices and so $r(x) = F_3(x) = F_3(y) = r(y)$.
Since $r$ takes distinct values on distinct vertices, this means $x = y$.

Otherwise, if $F_1(x) = F_1(y) \neq 0$ then $x$ and $y$ are not vertices.
It now follows from Lemma~\ref{lem:F1-F2} that $x$ and $y$ lie on the same edge.
If $x, y \in [p_e,q_e]$ then $x = y$ since $F_3|_{[p_e,q_e]}$ is injective.
Otherwise, Lemma~\ref{lem:F1-F2} gives us that $x = x'$ or $x$ is $y$ reflected about the midpoint of its edge.
On the other hand, $F_3$ is antisymmetric on each edge so $F_3(x) = F_3(y)$ means that $x = y$.
\end{proof}

\subsection{Infinite rays}

\begin{table}[htbp]
  \small
  \begin{align*}
  &\begin{array}{c|c|c|c}
    \begin{array}{c}
      \text{Starting} \\ \text{Point}
    \end{array} & \text{Direction} & \text{Limit in } \mb{TP}^3 &
    \begin{array}{c}
      \text{Limit in} \\
      (\mb{TP}^1)^3
    \end{array}
      \\ \hline
    c_e; (e \notin T) & (0,1,0) & [-\infty: F_2(c_{e}): - \infty : -\infty ] & (F_1 , \infty , F_3) \\
    a_e; e\notin T & (0,-1,-1) & [F_1(a_e) : -\infty : -\infty : 0 ] &  (F_1(a_e) , -\infty , -\infty ) \\
    a_e; e\in T & (0, 0, -1) & [F_1(a_e) : F_2(a_e) : -\infty : 0]  & \! (F_1(a_e) , F_2(a_e) , -\infty )\!\! \\
    p_e & (1,s(e),0) & [ -\infty : F_2(p_e) : -\infty : -\infty] & (\infty , \infty , F_3(p_e) ) \\ \hline
    q_e & (1,s(e),0) & [-\infty : F_2(q_e) : -\infty : -\infty]  & (\infty , \infty , F_3(q_e) )\\
    b_e; e\in T & (0, 0, 1) & [-\infty : -\infty : F_3(b_e) : -\infty ]  & (F_1(b_e), F_2(b_e), \infty)  \\
    b_e; e\notin T & (0,-1,1) & [-\infty : -\infty : F_3(b_e) : -\infty ] & (F_1(b_e), -\infty, \infty)  \\
    d_e; (e \notin T) & (0,1,0) & [-\infty : F_2(d_e) : -\infty : -\infty] & (F_1(d_e), \infty, F_3(d_e)) \\ \hline
    v \in V(G) & * & [-\infty: -\infty: F_3(v) : 0 ] & (-\infty , -\infty , F_3(v))
  \end{array} \\[1em]
  &* = \Big( -\deg(v), -\sum\limits_{e \ni v} s(e), 0 \Big)
  \end{align*}
  \caption{Directions of infinite rays and their limit in $\mb{TP}^3$ and $(\mb{TP}^1)^3$.}
  \label{tab:rays}
\end{table}

For each of the points $a_e, b_e, c_e, d_e, p_e, q_e$ as well as each vertex of $G$,
we have an infinite ray in $\Sigma$. For example the ray from $a_e$ to $a_e'$.
Let us refer to each of these rays as $p$-rays, $c$-rays, $a$-rays, etc.

In this section, we prove that image of the $a, b, c, d, p$, and $q$ rays do not intersect each other in $\R^3$,
or the image of the finite skeleton, $\Gamma$.
The intersections of these rays at the boundary strata of $\mb{TP}^3$ and $(\mb{TP}^1)^3$
is recorded in Table~\ref{tab:rays}.

The direction of each of these rays in the image $F(\Sigma)$ is given by looking at the sum of the incoming slopes at the point in $F$.
For reference, these directions are also recorded in Table~\ref{tab:rays}.

\begin{lemma}
  The image of $[c_e,c_e')$ or $[d_e,d_e')$  under $F$ intersects the image of $\Gamma$ only at $c_e$ or $d_e$ respectively.
\end{lemma}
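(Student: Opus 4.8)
The plan is to trace the ray $[c_e, c_e')$ explicitly and show its image under $F$ stays in a region of $\R^3$ that $F(\Gamma)$ cannot reach. By Table~\ref{tab:rays}, the $c$-ray (for $e \notin T$) emanates from $c_e$ in the direction $(0,1,0)$, i.e.\ only the second coordinate $F_2$ increases along it while $F_1$ and $F_3$ stay constant at $F_1(c_e)$ and $F_3(c_e)$. So the image of $[c_e,c_e')$ is $\{F_1(c_e)\} \times [F_2(c_e),\infty) \times \{F_3(c_e)\}$, and the image of $[d_e,d_e')$ is the symmetric statement with $d_e$ in place of $c_e$. The first thing I would do is observe that any point $x \in \Gamma$ with $F(x)$ lying on this ray must satisfy $F_1(x) = F_1(c_e)$ and $F_2(x) \ge F_2(c_e)$ (and $F_3(x) = F_3(c_e)$). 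The strategy is then to use condition \ref{S:c} together with \ref{S:disjoint}: the value $F_2(c_e)$ (for $e \notin T$) is, by construction, strictly larger than $F_2$ on the entire closure of $[v(e),c_e]$ and on every tree edge, and the "large gap" clauses force $F_2(c_{e'})$ for distinct $e' \notin T$ to sit far above $F_2(p_e)$.

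The key step is the following case analysis for a point $x \in \Gamma$ with $F_1(x) = F_1(c_e)$ and $F_2(x) \ge F_2(c_e)$. First, $x$ is not a vertex, since $F_1(v) = 0$ and $F_1(c_e) = \dist_e(v(e),c_e) > 0$. So $x$ lies in the interior of some edge $e''$; after reflecting about the midpoint (which preserves $F_1$ and $F_2$) we may assume $v(e'')$ is the nearer vertex, so $F_1(x) = \dist_{e''}(v(e''),x)$. The condition $F_2(x) \ge F_2(c_e) > F_2|_{\text{tree edges}}$ forces $e'' \notin T$; and since $F_2(x) > F_2$ on $[v(e''),c_{e''}]$ would contradict... wait, I need $F_2(x) \ge F_2(c_e)$, and I want to locate $x$ on $e''$. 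On $e'' \notin T$, $F_2$ restricted to $[v(e''),c_{e''}]$ has maximum $F_2(c_{e''})$ (a local max by the graph in Figure~\ref{fig:F2-graphT}), then decreases on the "bump", then rises to $F_2(p_{e''}) = \max F_2|_{e''}$. The value $F_2(x) \ge F_2(c_e)$, combined with the disjointness and large-gap condition \ref{S:disjoint} comparing the intervals $[F_2(c_{e}),F_2(p_{e})]$ across edges $e \notin T$, pins down $F_2(x)$ to lie in the same interval $[F_2(c_e),F_2(p_e)]$, which in turn forces $e'' = e$. Once $e'' = e$, I have $\dist_e(v(e),x) = F_1(x) = F_1(c_e) = \dist_e(v(e),c_e)$, so $x = c_e$; for the $d$-ray one lands on $d_e$ by the symmetric argument.

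The main obstacle I anticipate is handling the "bump" in the graph of $F_2|_e$ for $e \notin T$ (Figure~\ref{fig:F2-graphT}): because $F_2$ is not monotone on such an edge, the equation $F_2(x) = $ (a value on the ray) can a priori have several solutions on a single edge, and one must check that $F_1$ separates these — i.e.\ that the two points of $e$ with a given $F_2$-value on the rising part versus near the bump have different $F_1$-values, or are excluded because their $F_2$-value is too small to reach $F_2(c_e)$. Concretely, I expect to need that the only point of $e$ (up to the midpoint reflection) with $F_1$-value exactly $F_1(c_e)$ and $F_2$-value at least $F_2(c_e)$ is $c_e$ itself; this should follow from the shape of the graph in Figure~\ref{fig:F2-graphT} (the bump occurs strictly between $c_e$ and $a_e$, so to the right of $c_e$ the value of $F_1$ has already exceeded $F_1(c_e)$ before $F_2$ climbs back up to $F_2(p_e)$), but it requires reading off the precise slopes from the figure rather than a slick argument. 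Everything else — ruling out vertices, ruling out tree edges, ruling out other non-tree edges — is immediate from \ref{S:c}, \ref{S:disjoint}, and the already-established injectivity of $F|_\Gamma$ (Lemma~\ref{lem:injective-on-Gamma}).
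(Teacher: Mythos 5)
Your overall skeleton (rule out vertices using $F_1$, rule out tree edges using \ref{S:c}, identify the edge, then pin down the point) matches the paper's, but the step where you identify the edge is wrong. You claim that $F_2(x) \ge F_2(c_e)$ together with the disjointness of the intervals $[F_2(c_{e'}), F_2(p_{e'})]$ from \ref{S:disjoint} ``pins down $F_2(x)$ to lie in the same interval $[F_2(c_e),F_2(p_e)]$.'' It does not: the ray is unbounded in the second coordinate, so $\lambda$ can be arbitrarily large and $F_2(x) = F_2(c_e)+\lambda$ can perfectly well land in the interval $[F_2(c_{e''}), F_2(p_{e''})]$ of a \emph{different} non-tree edge $e''$ with $F_2(c_{e''}) > F_2(p_e)$ --- and \ref{S:disjoint} guarantees such higher intervals exist whenever there is more than one non-tree edge. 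So the disjointness of the intervals buys you nothing here, and the first two coordinates alone cannot separate the $c$-ray of $e$ from the $c$-zone of another non-tree edge.

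What actually closes this case is the third coordinate, which you list as a constraint ($F_3(x) = F_3(c_e)$) but never use to identify the edge. The paper's route: since $F_1(x) = F_1(c_e) = \dist_e(v(e), c_e)$ lies in the $c$-interval of the interval condition, $x$ sits in $[v, a_{e''}]$ or $[b_{e''}, w]$, where $F_3$ is constant equal to $r(v)$; then $r(v) = F_3(x) = F_3(c_e) = r(v(e))$, so \ref{R:distinct} forces $v \in \{v(e), w(e)\}$, and because edges outside $T$ share no vertices this gives $e'' = e$. With that in hand your final step is fine, and the ``bump'' in $F_2|_{e''}$ that you flag as the main obstacle evaporates: within a single edge, $F_1$ alone (distance to the nearest vertex) already pins $x$ down to $\{c_e, d_e\}$, and $F_3$ (antisymmetric on the edge, with $r(v(e)) \ne r(w(e))$) distinguishes the two. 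The bump is a symptom of trying to carry the cross-edge argument on the $F_2$ coordinate, which is only needed to exclude tree edges.
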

\begin{proof}
The first two coordinates of the ray at $c_e$ and the ray at $d_e$ are identical, so we will only make a distinction between $c$-ray or $d$-ray when we start talking about the third coordinate.

A point on $F([c_e,c_e'))$ or $F([d_e,d_e'))$ is of the form
\[ F(c_e \text{ or } d_e) + \lambda (0,1,0) \]
for some $\lambda \ge 0$. Suppose that some point of this ray coincides with $F(x)$ for some $x \in \Gamma$,
belonging to an edge $e'$, which would mean $F(x) = F(c_e \text{ or } d_e) + (0,\lambda,0)$.

First, if $e' \in T$, then by \ref{S:c}, $F_2(x) < F_2(c_e) \le F_2(c_e) + \lambda$.
Therefore, we must have $e' \notin T$.

Let $v$ denote the vertex closest to $x$.
Then we have \[ \dist_{e'}(v, x) = F_1(x) = F_1(c_e) = \dist_{e}(v(e), c_e). \]
By the interval condition, this implies that $x \in [v, a_{e'}]$ or $x \in [b_{e'}, w]$.

Now, looking at the third coordinates, we have
\[ r(v) = F_3(x) = F_3(c_{e} \text{ or } d_e) = r(v(e) \text{ or } w(e)). \]
By \ref{R:distinct} we must have $v = v(e)$ or $v = w(e)$.
Since the edges outside $T$ do not share a vertex, this means $e = e'$.

Since $e = e'$ and $F_1(x) = F_1(c_{e})$, we either have $x = c_{e}$ or $x = d_{e}$. If we started with a $c$-ray, then $F_3(x) = F_3(c_e)$ implies $x = c_e$ because $F_3$ is antisymmetric on $[c_e,d_e]$ and likewise if we started with a $d$-ray.
\end{proof}

\begin{lemma} \label{lem:ae-ray}
  For $e \notin T$, the image of $[a_{e},a_{e}')$ and of $[b_{e},b_{e}')$ intersects the image of $\Gamma$ only at $a_{e}$ or $b_e$ respectively.
\end{lemma}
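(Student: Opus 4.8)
The structure will parallel the preceding lemma on $c$- and $d$-rays, but the direction vector is now $(0,-1,-1)$ for $a_e$ (and $(0,-1,1)$ for $b_e$), so both the second and third coordinates change along the ray. A point on $F([a_e,a_e'))$ has the form $F(a_e) + \lambda(0,-1,-1)$ for $\lambda \ge 0$, i.e.\ $(F_1(a_e),\,F_2(a_e)-\lambda,\,r(v(e))-\lambda)$ (recall $F_3(a_e) = r(v(e))$ when $v(e)$ is the closer vertex, by the definition of $r$). Suppose this equals $F(x)$ for some $x\in\Gamma$ on an edge $e'$; after reflecting we may assume the vertex $v$ closest to $x$ is $v(e')$. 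Comparing first coordinates gives $\dist_{e'}(v,x) = F_1(x) = F_1(a_e) = \dist_e(v(e),a_e)$, and by the interval condition $x$ lies in the ``$a$-block'', so in fact $x$ is either in $[v, p_{e'}]$-region near an $a$-point or its mirror; more precisely $F_1(x)=F_1(a_e)$ together with \ref{R:A-distinct} (the distances $\dist(a_e,v(e))=F_1(a_e)$ are distinct across edges) forces $e'=e$ and $x\in\{a_e,b_e\}$, hence $x=a_e$ since $F_3$ is antisymmetric on the edge and the two rays at $a_e,b_e$ have opposite third-coordinate behaviour.

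\emph{Making the block argument precise.} The one subtlety is that a priori $x$ need not lie exactly at an $a$-point: $F_1(x) = F_1(a_e)$ only pins down the distance of $x$ from its nearest vertex, and on a single edge there can be two points (symmetric about the midpoint) at that distance, while on other edges there could be points at the same distance that are \emph{not} $a$-points. This is exactly why I would invoke the interval condition: since $a_e$ lies strictly inside the ``$a$-interval'' (to the right of every $c$-point and to the left of every $p$-point), any point $x$ with $\dist(v,x) = F_1(a_e)$ and with $v$ its nearest vertex also lies in that same interval, so it sits between the $c$-block and the midpoint region. Within that range the only point on edge $e'$ at distance exactly $F_1(a_{e'})$ from $v(e')$ is $a_{e'}$ itself — unless $F_1(a_e) \neq F_1(a_{e'})$, in which case no such $x$ exists on $e'$ at all. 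Thus \ref{R:A-distinct} does the separating work and leaves only $x = a_e$.

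\emph{Ruling out $e'\in T$ and the $F_2$ check.} As in the previous lemma I should first dispose of the case $e'\in T$: if $e'\in T$ then for every $y\in e'$ we have, by \ref{S:c} (applied with the edge $e\notin T$ playing its role), that $F_2(y)$ is bounded above by $F_2(c_{e'})$ minus a large gap, whereas $F_2(x) = F_2(a_e) - \lambda$ with $a_e\notin T$ has $F_2(a_e)$ lying in the high range $[F_2(c_e),F_2(p_e)]$; since $\lambda$ can be large I instead compare using the third coordinate $r(v) = r(v(e)) - \lambda \ge \min r - \lambda$, forcing $\lambda \le \max r - \min r$, and then \ref{S:c}'s gap estimate shows $F_2(x) = F_2(a_e) - \lambda > F_2(y)$ for all $y\in e'\in T$, a contradiction. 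So $e'\notin T$, and then the argument above applies. The symmetric statement for $[b_e,b_e')$ is identical with $(0,-1,1)$ in place of $(0,-1,-1)$ and $w(e)$ in place of $v(e)$; the opposite sign in the third coordinate is what guarantees a $b$-ray never meets an $a$-ray even when it meets $\Gamma$, but that cross-ray disjointness is handled in a later lemma, not here.

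\emph{Main obstacle.} The delicate point is the bookkeeping on the third coordinate when $e'\in T$: unlike the $c$/$d$-ray lemma, the third coordinate is not constant along the ray, so I cannot immediately read off $r(v) = r(v(e))$. The fix is to note that $r(v)$ ranges over a bounded set, which bounds $\lambda$, and then feed that bound into the ``large gap'' clause of \ref{S:c} (and, if $x$ lies in an interval $[\vf(p_{e''}),\vf(q_{e''})]$ of some third edge, into \ref{S:F2lambda}) to get the $F_2$-contradiction. Getting the quantifiers in \ref{S:c}/\ref{S:F2lambda} to line up with ``$\lambda \le \max F_3 - \min F_3$'' is the one place where care is genuinely needed; everything else follows the template of the preceding proof.
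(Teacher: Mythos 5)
There is a genuine gap in the case where $x$ lies on a non-tree edge $e'\ne e$. Your appeal to \ref{R:A-distinct} does not do the work you ask of it: that condition only says the numbers $F_1(a_{e'})$ are pairwise distinct across edges, whereas the point $x$ you must rule out is \emph{not} required to be an $a$-point at all. For any edge $e'$, the point of $e'$ at distance $F_1(a_e)$ from $v(e')$ exists (the $a$-interval of the interval condition is common to all edges and lies left of every midpoint), lies inside the $a$-interval, and satisfies $F_1(x)=F_1(a_e)$ without being $a_{e'}$. So your claim that ``no such $x$ exists on $e'$ at all'' when $F_1(a_e)\ne F_1(a_{e'})$ is false, and the first coordinate alone cannot force $e'=e$. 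The paper closes this case with the second coordinate: the interval condition places $x$ in $[c_{e'},p_{e'}]$ or $[q_{e'},d_{e'}]$, hence $F_2(x)\in[F_2(c_{e'}),F_2(p_{e'})]$, and \ref{S:disjoint} (when $e'\notin T$, $e'\ne e$) or \ref{S:c} (when $e'\in T$) guarantees that $\bigl|F_2(a_e)-F_2(x)\bigr|>\max F_3-\min F_3$; since the second coordinate of the ray gives $\lambda=F_2(a_e)-F_2(x)$ while the third gives $\lambda=\abs{F_3(a_e)-F_3(x)}\le\max F_3-\min F_3$, this is a contradiction. You run exactly this argument for $e'\in T$ (correctly, including the bound on $\lambda$ from the third coordinate), but for $e'\notin T$ you fall back on the flawed \ref{R:A-distinct} step instead of invoking \ref{S:disjoint}; your mention of \ref{S:F2lambda} is also misplaced here, since a point with $F_1(x)=F_1(a_e)$ cannot lie in $[p_{e''},q_{e''}]$ by the interval condition.

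The remainder of your outline is sound and matches the paper: once $e'=e$ is established, $F_1$ pins $x$ down to $\{a_e,b_e\}$ and the antisymmetry of $F_3$ on the edge separates the $a$-ray from $b_e$ and vice versa. To repair the proof, simply treat both subcases ($e'\in T$ and $e'\notin T$ with $e'\ne e$) by the same gap-versus-$\lambda$ comparison, citing \ref{S:c} and \ref{S:disjoint} respectively, rather than reserving the $F_2$ argument for the tree-edge case only.
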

\begin{proof}
As before, the first two coordinates of the $a_e$ and $b_e$-rays are identical, so we will only make a distinction between $a$-ray or $b$-ray for the third coordinate.

Suppose that $x \in \Gamma$ and $F(x) = F(a_e \text{ or } b_e) + \lambda(0,-1,\pm 1)$.
Let $e'$ be an edge containing $x$.
Since $F_1(x) = F_1(a_{e})$, we have $x \in [c_{e'}, p_{e'}]$ or $x \in [q_{e'}, d_{e'}]$ by the interval condition.
Therefore, $F_2(x) \in [F_2(c_{e'}), F_2(p_{e'})]$.

On the other hand, by \ref{S:c} or \ref{S:disjoint} the distance between $F_2(x)$ and $F_2(a_{e})$ is quite large if $e' \ne e$.
Specifically, if $e' \ne e$ we have
\[
\lambda = F_2(a_{e}) - F_2(x) > \max F_3 - \min F_3 \ge \abs{F_3(a_e \text{ or } b_e) - F_3(x)} = \lambda.
\]
See Figure~\ref{fig:ae-ray-gap} for a picture of the situation.

Since this is impossible, we must have $e' = e$.
Now, from $F_1(x) = F_1(a_{e})$ we have either $x = a_{e}$ or $x = b_{e}$, and then we can use $F_3$ to distinguish between $a_e$ and $b_e$.
\end{proof}

\begin{lemma}
  For $e \in T$, the image of $[a_e,a_e')$ or $[b_e, b_e')$ intersects the image of $\Gamma$ only at $a_e$ or $b_e$ respectively.
\end{lemma}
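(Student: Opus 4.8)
The plan is to mimic the proof of Lemma~\ref{lem:ae-ray}, adapting it to the case $e \in T$ where the direction of the $a_e$- and $b_e$-rays is $(0,0,\pm 1)$ rather than $(0,-1,\pm 1)$, so that the second coordinate $F_2$ no longer moves along the ray. As before, the first two coordinates of the $a_e$- and $b_e$-rays agree, so I only distinguish the two cases when the third coordinate enters.

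\medskip

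First I would suppose $x \in \Gamma$ lies on an edge $e'$ with $F(x) = F(a_e \text{ or } b_e) + \lambda(0,0,\pm 1)$ for some $\lambda \ge 0$. Comparing first coordinates gives $F_1(x) = F_1(a_e)$, so by the interval condition $x$ lies in one of the intervals $[c_{e'}, p_{e'}]$ or $[q_{e'}, d_{e'}]$ (the ``$a$-type'' region of $e'$). Comparing second coordinates gives $F_2(x) = F_2(a_e)$. If $e' \notin T$, then $F_2(x) \in [F_2(c_{e'}), F_2(p_{e'})]$, and since $e$ and $e'$ are distinct edges with $e \in T$, $e' \notin T$, condition~\ref{S:c} forces a large gap between $F_2(a_e)$ (which lies on edge $e \in T$, hence $F_2(a_e) < F_2(c_{e'})$) and this interval, contradicting $F_2(x) = F_2(a_e)$. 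Hence $e' \in T$ as well. Now with both $e, e' \in T$, on the relevant ``$a$-type'' subintervals we have $F_2|_e = s(e) F_1|_e$ and $F_2|_{e'} = s(e') F_1|_{e'}$, so the equalities $F_1(x) = F_1(a_e)$ and $F_2(x) = F_2(a_e)$ give $s(e') F_1(a_e) = s(e) F_1(a_e)$; since $F_1(a_e) \ne 0$ (as $a_e$ is an interior point with $v(e)$-distance equal to the positive value $\dist_e(a_e,b_e)$-related quantity, in any case nonzero), this yields $s(e) = s(e')$ and therefore $e = e'$ by~\ref{S:distinct}.

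\medskip

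Once $e = e'$, the relation $F_1(x) = F_1(a_e)$ together with the symmetry of $a_e, b_e$ about the midpoint of $e$ forces $x = a_e$ or $x = b_e$. Finally, since $F_3$ is antisymmetric on each edge (in particular on $[a_e, b_e]$), the condition $F_3(x) = F_3(a_e)$ (resp. $F_3(b_e)$) singles out $x = a_e$ (resp. $x = b_e$), which gives $\lambda = 0$ and completes the proof. I do not expect a genuine obstacle here: the only subtlety is making sure the case analysis on whether $e'$ is in $T$ is exhaustive and that I invoke the correct distinctness conditions (\ref{S:c} for the $T$-versus-not-$T$ dichotomy, \ref{S:distinct} to conclude $e = e'$, \ref{R:A-distinct}/antisymmetry of $F_3$ to pin down which of $a_e, b_e$). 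The one point needing a line of care is the claim $F_2(a_e) < F_2(c_{e'})$ for $e \in T$, $e' \notin T$, which is exactly the first assertion of~\ref{S:c}.
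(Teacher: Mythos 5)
Your proof is correct, and in substance it follows the same two-stage strategy as the paper: use the first two coordinates to force $x \in \{a_e, b_e\}$, then use the third coordinate and the sign of the ray direction to rule out the wrong one. The difference is in the middle: the paper simply observes that $F_1(x) = F_1(a_e)$ and $F_2(x) = F_2(a_e)$ and invokes Lemma~\ref{lem:F1-F2} (whose third alternative, $x, a_e \in [p_e,q_e]$, is vacuous since $a_e \notin [p_e,q_e]$), whereas you re-derive that conclusion by hand, running the interval condition together with \ref{S:c} and \ref{S:distinct} in the style of Lemma~\ref{lem:ae-ray}. Your longer route is sound --- in particular the dichotomy $e' \in T$ versus $e' \notin T$ is exhaustive, \ref{S:c} correctly kills the second case, and $F_2|_{e'} = s(e')F_1|_{e'}$ on all of $e'$ for $e' \in T$ justifies the $s(e) = s(e')$ step --- but it buys nothing over citing the lemma that the paper has already proved for exactly this purpose.

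One small imprecision in your last step: the hypothesis is $F_3(x) = F_3(a_e) - \lambda$ (for the $a$-ray), not $F_3(x) = F_3(a_e)$; writing the latter presupposes $\lambda = 0$. The correct formulation, as in the paper, is that $F_3(b_e) > F_3(a_e) \ge F_3(a_e) - \lambda = F_3(x)$ excludes $x = b_e$, after which $x = a_e$ and $\lambda = 0$ follow. Your invocation of antisymmetry of $F_3$ and of the ray directions shows you have the right mechanism in mind, so this is a slip of notation rather than a gap.
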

\begin{proof}
Suppose that $x \in \Gamma$ and $F(x) = F(a_e \text{ or } b_e) + (0,0, \pm \lambda)$ for some $\lambda \in \mb R_{\ge 0}$.
Then in particular, $F_1(x) = F_1(a_e)$ and $F_2(x) = F_2(a_e)$ so by Lemma~\ref{lem:F1-F2} we have $x = a_e$ or $x = b_e$.

For the $[a_e,a_e')$-ray, we have $F_3(b_e) > F_3(a_e) \ge F_3(a_e) - \lambda = F_3(x)$. So we can't have $x = b_e$, hence we must have $x = a_e$.

Likewise, for the $[b_e,b_e')$-ray, we have $F_3(a_e) < F_3(b_e) \le F_3(b_e) + \lambda = F_3(x)$.
\end{proof}

\begin{lemma}
  The image of $[p_e, p_e')$ or $[q_e,q_e')$ intersects the image of $\Gamma$ only at $p_e$ or $q_e$, respectively.
\end{lemma}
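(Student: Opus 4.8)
The plan is to show that the image under $F$ of the ray $[p_e, p_e')$ (resp.\ $[q_e, q_e')$) meets $F(\Gamma)$ only at $F(p_e)$ (resp.\ $F(q_e)$); the two cases are identical, so I will describe the $p_e$-ray. By Table~\ref{tab:rays} this ray is $\{F(p_e) + \lambda(1, s(e), 0) : \lambda \ge 0\}$, and a point $F(x)$ with $x \in \Gamma$ lies on it exactly when $F_3(x) = F_3(p_e)$, $\;F_2(x) - s(e)F_1(x) = F_2(p_e) - s(e)F_1(p_e)$, and $F_1(x) \ge F_1(p_e)$, with $\lambda = F_1(x) - F_1(p_e)$. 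So I want to show these three conditions force $x = p_e$.

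First I would use the $F_1$-inequality to locate $x$. Recall (Figures~\ref{fig:F1-graph} and \ref{fig:F3-graph}) that on each edge $e'$ the function $F_1$ climbs with slope $1$ from $0$ at $v(e')$ to its maximum $\vf(p_{e'})$ on $[p_{e'}, q_{e'}]$ and falls back to $0$ at $w(e')$, while $F_3$ equals the constant $r(v(e'))$ on $[v(e'), a_{e'}]$, climbs with slope $1$ on $[a_{e'}, b_{e'}]$, and equals $r(w(e'))$ on $[b_{e'}, w(e')]$. By the interval condition all $a$-points lie strictly to the left of all $p$-points, so $\vf(a_{e'}) < \vf(p_e) = F_1(p_e)$ for all $e, e'$; hence $F_1 < F_1(p_e)$ on the two flat stretches $[v(e'), a_{e'}]$ and $[b_{e'}, w(e')]$ of $F_3$. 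Since $F_1(x) \ge F_1(p_e)$, the point $x$ cannot lie on such a stretch, so $x$ lies in the interior $(a_{e'}, b_{e'})$ of its edge $e'$.

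Next I would carry out the key computation on $(a_{e'}, b_{e'})$. From the divisors $D_2, D_3$ (equivalently Figures~\ref{fig:F3-graph}--\ref{fig:F2-graphT}) the slopes of $F_2$ on $[a_{e'}, b_{e'}]$ are $+s(e'), 0, -s(e')$ exactly where those of $F_1$ are $+1, 0, -1$, so $F_2 - s(e')F_1$ is constant there, with value $K_{e'} := F_2(p_{e'}) - s(e')F_1(p_{e'})$; explicitly $K_{e'} = 0$ if $e' \in T$ and $K_{e'} = -(F_1(a_{e'}) - F_1(c_{e'}))$ if $e' \notin T$, so in any case $K_{e'} \le 0$ and $|K_{e'}| < \vf(a_{e'}) < F_1(p_e)$. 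The same formula gives $K_e := F_2(p_e) - s(e)F_1(p_e) \le 0$ with $|K_e| < F_1(p_e)$, and the symmetry of $F_1, F_2$ about the midpoint of $e$ gives $F_2(q_e) - s(e)F_1(q_e) = K_e$ as well (which is why the $q_e$-ray is handled identically). Subtracting $F_2(x) - s(e')F_1(x) = K_{e'}$ from $F_2(x) - s(e)F_1(x) = K_e$ yields $(s(e') - s(e))F_1(x) = K_e - K_{e'}$. If $e' \ne e$, then $s(e') \ne s(e)$ by \ref{S:distinct}, and since $K_e, K_{e'} \le 0$ we get
\[
  |F_1(x)| \;=\; \frac{\bigl||K_{e'}| - |K_e|\bigr|}{|s(e') - s(e)|} \;\le\; \bigl||K_{e'}| - |K_e|\bigr| \;<\; F_1(p_e),
\]
contradicting $F_1(x) \ge F_1(p_e)$. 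Therefore $e' = e$, and then $x, p_e \in (a_e, b_e)$ with $F_3$ strictly increasing there and $F_3(x) = F_3(p_e)$, so $x = p_e$ and $\lambda = 0$, as wanted.

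The step I expect to require the most care is the size estimate $|K_{e'}|, |K_e| < F_1(p_e)$, i.e.\ that the corrections coming from the small ``bumps'' of $F_2$ at the $c$- and $a$-points stay below $F_1(p_e)$. This is precisely what the interval condition delivers (it forces $\vf(a_{e'}) < \vf(p_e)$ for all $e, e'$), and it is what makes the division by the nonzero integer $s(e') - s(e)$ yield a contradiction rather than just a bound. Everything else is a direct reading of the graphs of $F_1$, $F_2$, $F_3$.
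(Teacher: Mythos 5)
Your proof is correct, but it takes a genuinely different route from the paper's. The paper argues by cases according to whether $e$ and $e'$ lie in the spanning tree $T$: it first bounds $\lambda$ by $\max F_3 - \min F_3$ via the interval condition and then invokes the ``large gap'' conditions \ref{S:c}, \ref{S:disjoint} and \ref{S:F2lambda} to rule out the mixed and non-tree cases. You instead first localize $x$ to $(a_{e'}, b_{e'})$ using only $F_1(x) \ge F_1(p_e)$ and the interval condition, observe that $F_2 - s(e')F_1$ is constant on that interval with value $K_{e'} \in (-F_1(p_e), 0]$ (the correction coming from the bump at $c_{e'}, a_{e'}$ when $e' \notin T$), and then extract $e = e'$ from the single linear equation $(s(e')-s(e))F_1(x) = K_e - K_{e'}$ using only \ref{S:distinct}; the endgame via injectivity of $F_3$ on $[a_e,b_e]$ is the same. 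I checked the slope bookkeeping: on $[a_{e'},b_{e'}]$ the slopes of $F_2$ are indeed $s(e')$ times those of $F_1$ for both tree and non-tree edges, and $|K_{e'}| \le \vf(a_{e'}) - \vf(c_{e'}) < \vf(a_{e'}) < \vf(p_e) = F_1(p_e)$, so the displayed estimate and the contradiction with $F_1(x) \ge F_1(p_e)$ are valid. Your argument is more uniform (no case split on $T$-membership, no need to bound $\lambda$) and shows that for this particular lemma the quantitative separation conditions \ref{S:c}--\ref{S:F2lambda} are not actually needed --- \ref{S:distinct} and the interval condition suffice --- though those conditions remain essential for the neighbouring lemmas on $a$-, $b$-, $c$- and $d$-rays.
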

\begin{proof}
Let $x \in \Gamma$ with $F(x) = F(p_e \text{ or } q_e) + \lambda(1,s(e),0)$ and $\lambda \ge 0$.
Let $e'$ be an edge that contains $x$ and $e \neq e'$.

Suppose, for now, that $x$ is closest to $v(e')$ since this part of the argument is symmetrical.

First, suppose $e, e' \in T$.
Then $F_1(x) = F_1(p_e) + \lambda$ means $F_2(x) = s(e')F_1(x) = s(e')F_1(p_e) + s(e')\lambda$.
But, on the other hand, $F_2(x) = F_2(p_e) + s(e)\lambda = s(e)F_1(p_e) + s(e)\lambda$.
This is impossible unless $e = e'$.

Next, because $\min\{\vf(x),\vf(p_{e'})\} = F_1(x) \ge F_1(p_e) = \vf(p_e)$, we have
$\vf(p_e) \le \vf(x) \le \vf(q_e)$ by the interval condition.
Thus, \[ \lambda = F_1(x) - F_1(p_e) \le \dist(p_e,q_e) \le \dist(a_e,b_e) \le \max F_3 - \min F_3. \]
We should think of $\lambda$ as being small.

If $e \notin T$ then already $F_2(p_e) + s(e)\lambda \ge F_2(p_e) > F_2(x)$ for any $x \in e \notin T$.

If $e \in T$ but $e' \notin T$ then we appeal to \ref{S:F2lambda} to see that this is impossible.

Thus, $e = e'$ and now things are no longer symmetric.
Now, since $F_1(p_e) = \max_{y \in e} F_1(y)$, it must be that $\lambda = 0$ and $x \in [p_e,q_e]$.
Since $F_3$ is injective on this interval, we have $x = p_e$ or $x = q_e$ depending on whether we started with a $p$-ray or a $q$-ray.
\end{proof}

\begin{figure}[htbp]
  \centering
  \begin{tikzpicture}
    \draw (1,0) -- (2,2) -- (3,3) -- (4,5) -- (5,5) -- (6,3) -- (7,2)  -- (8,0);

    \draw (1,0) -- (2.2,1.2) -- (7.1,1.2) -- (8.3,0);

    \draw[dashed] (1,3) -- (8,3) node [right] {$F_2(a_{e})$};
    \draw[dashed] (1,2) -- (8,2) node [right] {$F_2(c_{e})$};
    \draw[dashed] (1,1) -- (8,1) node [right] {$F_2(x)$};

    \filldraw (3,3) circle (0.5mm);
    \filldraw (2,1) circle (0.5mm);

    \draw[<->] (3,1.2) -- (3,2);
        \node[right] at (3,1.6) {$> \max F_3 - \min F_3$};

    \draw[<->] (1.7,1) -- (1.7,3);
        \node[above left] at (1.7,2) {$\lambda=$};
    \end{tikzpicture}
  \caption{Situation in Lemma~\ref{lem:ae-ray}}
  \label{fig:ae-ray-gap}
\end{figure}
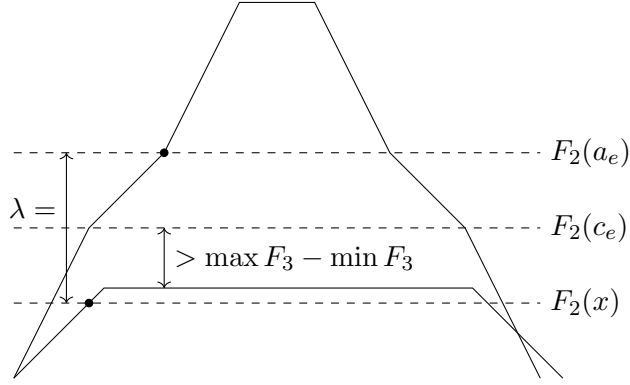

\subsubsection{Comparing between rays}
\begin{note}
These proofs are all quite short and just come down to requiring some parameters being distinct.
\end{note}

\begin{lemma}
Any pair of distinct $c$-rays or pair of distinct $d$-rays do not intersect.
\end{lemma}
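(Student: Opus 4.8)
The plan is to combine the direction data in Table~\ref{tab:rays} with the distinctness hypothesis on the points $c_e$ from Section~\ref{sec:distinctness}. By that table, a $c$-ray $[c_e,c_e')$ or a $d$-ray $[d_e,d_e')$---which exist only for $e\notin T$---has direction $(0,1,0)$ in $\R^3$, so along it the first and third coordinates stay fixed; thus the image of $[c_e,c_e')$ lies in the line $\{F_1(c_e)\}\times\R\times\{F_3(c_e)\}$, and similarly the image of $[d_e,d_e')$ lies in $\{F_1(d_e)\}\times\R\times\{F_3(d_e)\}$. In particular two such rays can only meet if they already agree in the first coordinate.

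First I would dispose of the $c$-rays. If $[c_e,c_e')$ and $[c_{e'},c_{e'}')$ with $e\neq e'$ (both outside $T$) shared an image point, comparing first coordinates would give $F_1(c_e)=F_1(c_{e'})$, i.e.\ $\dist_e(v(e),c_e)=\dist_{e'}(v(e'),c_{e'})$, contradicting the requirement in Section~\ref{sec:distinctness} that these distances be pairwise distinct. Next I would reduce the $d$-ray case to this one: since $c_e$ and $d_e$ are placed symmetrically about the midpoint of $e$ and $F_1|_e$ is itself symmetric about that midpoint---being the tent-with-plateau function one reads off from $\divisor(F_1)|_e=v(e)+w(e)-p_e-q_e$ and which is depicted in Figure~\ref{fig:F1-graph}---we get $F_1(d_e)=F_1(c_e)$. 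Hence the values $F_1(d_e)$, $e\notin T$, are also pairwise distinct, and the same first-coordinate comparison forbids two distinct $d$-rays from meeting.

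I do not expect any real obstacle here; as the note preceding the lemma already signals, the argument is short and amounts to invoking a distinctness condition. The only thing to get right is that it is the distinctness of the $c_e$'s (not of the $a_e$'s) that is relevant, and that this distinctness transfers to the $d_e$'s through the midpoint symmetry of $F_1$; the constancy of the first coordinate along these rays is immediate from the direction column of Table~\ref{tab:rays}.
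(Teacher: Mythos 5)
Your proposal is correct and matches the paper's own proof: both arguments observe that the $(0,1,0)$ direction fixes the first coordinate along these rays, invoke the distinctness of $F_1(c_e)=\dist_e(v(e),c_e)$ from Section~\ref{sec:distinctness} to separate distinct $c$-rays, and transfer to $d$-rays via the midpoint symmetry (the paper phrases this as ``change $c$ to $d$ and $v(e)$ to $w(e)$''). Your write-up is just slightly more explicit about why $F_1(d_e)=F_1(c_e)$.
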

\begin{proof}
An intersection between two $c$-rays has the form $F(c_{e}) + (0,\lambda,0) = F(c_{e'}) + (0,\mu,0)$ for some $\lambda$ and $\mu$.
Because we chose distinct values for $F_1(c_{e}) = \dist_{e}(c_{e}, v(e))$,
and $F_1(c_{e}) = F_1(c_{e'})$, therefore $e = e'$.

For $d$-rays, simply change $c$ to $d$ and $v(e)$ to $w(e)$.
\end{proof}

\begin{lemma}
  Any pair of distinct $p$-rays or pair of distinct $q$-rays do not intersect.
\end{lemma}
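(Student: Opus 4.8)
The plan is to reuse the strategy of the preceding lemma, but reading off the \emph{third} coordinate instead of the first. By Table~\ref{tab:rays}, the image of a $p$-ray $[p_e,p_e')$ under $F$ consists of the points $F(p_e) + \lambda(1,s(e),0)$ with $\lambda \ge 0$; since the third entry of the direction vector $(1,s(e),0)$ vanishes, the third coordinate is constantly equal to $F_3(p_e)$ along the whole ray. Hence if the images of two $p$-rays, at $p_e$ and $p_{e'}$, meet in $\R^3$, then necessarily $F_3(p_e) = F_3(p_{e'})$.

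Now I would invoke the distinctness conditions imposed in Section~\ref{sec:distinctness}: the points $p_e$ were chosen so that the values $F_3(p_e) = r(v(e)) + \dist_e(p_e,a_e)$ are pairwise distinct. Therefore $F_3(p_e) = F_3(p_{e'})$ forces $e = e'$, so the two $p$-rays coincide, contradicting that they were distinct. The argument for $q$-rays is word for word the same, using instead that the values $F_3(q_e) = r(w(e)) - \dist_e(q_e,b_e)$ were also chosen to be pairwise distinct.

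There is no real obstacle here; the proof is a one-line coordinate comparison. The only thing worth double-checking is that the third coordinate really is constant along these rays, which is immediate from the ray direction recorded in Table~\ref{tab:rays}, and that we only need to compare a $p$-ray with a $p$-ray (resp.\ $q$ with $q$), the mixed case $p$ versus $q$ being handled separately by the condition $F_3(p_e)\neq F_3(q_{e'})$ from Section~\ref{sec:distinctness}.
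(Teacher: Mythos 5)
Your argument is correct and is essentially identical to the paper's own proof: both compare the third coordinate, which is constant along a $p$-ray (direction $(1,s(e),0)$), and invoke the distinctness of the values $F_3(p_e)$ (resp.\ $F_3(q_e)$) from Section~\ref{sec:distinctness} to force $e = e'$. Nothing is missing.
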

\begin{proof}
Two $p$-rays look like $F(p_e) + (\lambda,s(e)\lambda,0) = F(p_{e'}) + (\mu,s(e)\mu,0)$.
Because we chose distinct values of $F_3(p_e) = r(v(e)) + \dist_e(p_e, a_e)$,
and $F_3(p_e) = F_3(p_{e'})$, therefore $e = e'$.

Likewise, we chose distinct values for $F_3(q_e)$ so no pair of distinct $q$-rays can intersect.
\end{proof}

\begin{lemma}
Any pair of distinct $a$-rays or $b$-rays do not intersect.
\end{lemma}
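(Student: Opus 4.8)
The plan is to reuse the mechanism of the two preceding lemmas: along an $a$-ray (or a $b$-ray) the first coordinate is constant, while condition~\ref{R:A-distinct} forces distinct such rays to have distinct first coordinates at their cone points.

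Concretely, I would suppose the $a$-rays at $a_e$ and at $a_{e'}$ meet, say
\[ F(a_e) + \lambda d = F(a_{e'}) + \mu d', \qquad \lambda,\mu \ge 0, \]
where, according to Table~\ref{tab:rays}, $d, d' \in \{(0,-1,-1),(0,0,-1)\}$, the choice depending only on whether $e$, resp.\ $e'$, lies in $T$. Every one of these direction vectors has first coordinate $0$, so comparing first coordinates yields $F_1(a_e) = F_1(a_{e'})$. Since $F_1(a_e) = \dist_e(a_e,v(e))$ and these numbers are pairwise distinct by~\ref{R:A-distinct}, we get $e = e'$, contradicting distinctness. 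The case of two $b$-rays is literally the same: the admissible directions are now $(0,-1,1)$ and $(0,0,1)$, again with vanishing first coordinate, and $F_1(b_e) = \dist_e(b_e,w(e)) = \dist_e(a_e,v(e)) = F_1(a_e)$ by the symmetry of the pair $a_e,b_e$ about the midpoint of $e$, so~\ref{R:A-distinct} once more forces $e = e'$.

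If the statement is also meant to rule out a mixed intersection, an $a$-ray at $a_e$ against a $b$-ray at $b_{e'}$, the same first-coordinate comparison gives $F_1(a_e) = F_1(b_{e'}) = F_1(a_{e'})$, hence $e = e'$ by~\ref{R:A-distinct}; but then the third coordinates would have to satisfy $F_3(a_e) - \lambda = F_3(b_e) + \mu$ with $\lambda,\mu \ge 0$, which is impossible because $F_3(a_e) = r(v(e)) < r(w(e)) = F_3(b_e)$. I do not anticipate any genuine obstacle here: exactly as in the $c$-ray and $p$-ray lemmas, the entire content is that a single coordinate is preserved along these rays and takes pairwise distinct values at the cone points, so the only parameter condition invoked is~\ref{R:A-distinct}.
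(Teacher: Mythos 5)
Your argument is exactly the paper's: the direction vectors of all $a$- and $b$-rays have vanishing first coordinate, so the first coordinate along such a ray is the constant $F_1(a_e)=F_1(b_e)$, and \ref{R:A-distinct} makes these values distinct across edges. (The mixed $a$-versus-$b$ case you add is treated by the paper in the immediately following lemma, with the same $F_3$ comparison you give.)
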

\begin{proof}
The first coordinate of every point in an $a$-ray or $b$-ray is $F_1(a_e)$. By \ref{R:A-distinct}, these quantities are distinct.
\end{proof}

\begin{lemma}
No pair of $a, b, c, d, p$, or $q$-rays intersect, except possibly $a$ with $b$, $c$ with $d$ and $p$ with $q$.
\end{lemma}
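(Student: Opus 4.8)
The plan is to collapse every remaining cross-type case into a single comparison of first coordinates, and then quote the three lemmas already established. Fix the normalization $F_1(v) = 0$ for all $v \in V$ from Section~\ref{sec:injective}, so that $F_1|_e$ is the ``tent'' of Figure~\ref{fig:F1-graph}: it rises with slope $1$ from $v(e)$ to $p_e$, is constant on $[p_e,q_e]$, and falls with slope $1$ from $q_e$ to $w(e)$. In particular $F_1|_e$ is symmetric about the midpoint of $e$, so $F_1(c_e) = F_1(d_e)$, $F_1(a_e) = F_1(b_e)$ and $F_1(p_e) = F_1(q_e) = \max_{y \in e} F_1(y) = \dist_e(v(e),p_e)$; moreover the interval condition of Section~\ref{sec:interval-cond} is exactly the assertion that
\[
\max_{e \in E} F_1(c_e) \;<\; \min_{e \in E} F_1(a_e)
\;\le\; \max_{e \in E} F_1(a_e) \;<\; \min_{e \in E} F_1(p_e).
\]

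The first step is to read off from Table~\ref{tab:rays} the behaviour of the first coordinate of $F$ along each ray: it is constant and equal to $F_1(c_e)$ along a $c$- or $d$-ray attached to $e$; constant and equal to $F_1(a_e)$ along an $a$- or $b$-ray attached to $e$, regardless of whether $e \in T$ (all the relevant direction vectors have vanishing first entry); and at least $F_1(p_e)$ along a $p$- or $q$-ray attached to $e$, since both of these point in the direction $(1,s(e),0)$ with positive first entry. Combining this with the displayed inequality, every first coordinate occurring on a $c$- or $d$-ray is strictly smaller than every first coordinate occurring on an $a$- or $b$-ray, which is in turn strictly smaller than every first coordinate occurring on a $p$- or $q$-ray. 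Hence a ray from any one of the three families $\{c,d\}$, $\{a,b\}$, $\{p,q\}$ cannot meet a ray from another family.

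It then remains only to compare two rays from the same family; as there is at most one ray of each type per edge, these are exactly the pairs of two distinct $c$-rays, two distinct $d$-rays, two distinct $a$-rays, two distinct $b$-rays, two distinct $p$-rays, and two distinct $q$-rays coming from different edges, and each of these was excluded in the three preceding lemmas. The only pairs left are therefore $a$ with $b$, $c$ with $d$, and $p$ with $q$, as claimed. I do not anticipate a genuine difficulty in carrying this out; the only point requiring care is to invoke the interval condition in the ``the $c$-, $a$-, and $p$-points occupy three disjoint intervals after left-alignment'' form in which it was phrased, and to use the midpoint symmetry of the six chosen points so that the first-coordinate behaviour of the $d$- and $q$-rays is inherited from that of the $c$- and $p$-rays.
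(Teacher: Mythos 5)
Your proof is correct and takes essentially the same approach as the paper: the paper's own proof is a one-liner comparing first coordinates of the rays and invoking the interval condition, exactly as you do (your additional remark that same-type pairs were already handled by the preceding lemmas is a harmless clarification). Note that you have the ordering right — the interval condition places the $c$-points closest to $v(e)$, so $F_1(c_e) < F_1(a_{e'}) < F_1(p_{e''})$ — whereas the paper's proof of this particular lemma writes $F_1(a_e) < F_1(c_e)$, an apparent typo that is corrected in its later Lemma~\ref{vertex c a p}.
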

\begin{proof}
Note that the first coordinates of these rays are $F_1(a_e), F_1(c_e)$ and $F_1(p_e) + \lambda$ respectively.
By the interval condition, these are ordered
\[
 F_1(a_e) < F_1(c_e) < F_1(p_e) \le F_1(p_e) + \lambda. \qedhere
\]
\end{proof}

\begin{lemma}
  An $a$-ray cannot intersect a $b$-ray.
\end{lemma}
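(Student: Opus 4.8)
The plan is to reduce to the case where both rays lie over the same edge, and then separate the two rays by looking at the third coordinate.

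First I would note that the first coordinate is constant along any $a$-ray or $b$-ray: reading off Table~\ref{tab:rays}, the direction of the $a_e$-ray is $(0,-1,-1)$ or $(0,0,-1)$ according as $e \notin T$ or $e \in T$, and the direction of the $b_e$-ray is $(0,-1,1)$ or $(0,0,1)$; in every case the first component is $0$. Hence a point on the $a_e$-ray has first coordinate $F_1(a_e)$ and a point on the $b_{e'}$-ray has first coordinate $F_1(b_{e'})$. Since the defining divisor $D_1$ of $F_1$ is symmetric about the midpoint of each edge (the points $p_e,q_e$ being symmetric) and $F_1$ vanishes at both endpoints, $F_1$ is symmetric about the midpoint of $e'$, so $F_1(b_{e'}) = F_1(a_{e'})$. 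If the two rays met, then $F_1(a_e) = F_1(a_{e'})$, and \ref{R:A-distinct} forces $e = e'$.

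Next I would use the third coordinate. In each of the four cases above, the third component of the direction of an $a$-ray is $-1$ and that of a $b$-ray is $+1$. Therefore along the $a_e$-ray the third coordinate stays $\le F_3(a_e)$, while along the $b_e$-ray it stays $\ge F_3(b_e)$. But $F_3$ is constant equal to $r(v(e))$ on the segment $[v(e),a_e]$ and constant equal to $r(w(e))$ on $[b_e,w(e)]$, and by our convention $v(e)$ is the endpoint of $e$ with the smaller value of $r$; hence $F_3(a_e) = r(v(e)) < r(w(e)) = F_3(b_e)$. Thus the third coordinates of points on the two rays lie in the disjoint sets $(-\infty, F_3(a_e)]$ and $[F_3(b_e), \infty)$, so the rays cannot meet.

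I do not expect a genuine obstacle here: the proof is a two-line consequence of \ref{R:A-distinct} plus the antisymmetry of $F_3$ on each edge. The only bookkeeping that needs care is checking the ray directions uniformly in the cases $e \in T$ and $e \notin T$ (done above by observing the first component is always $0$ and the third component always has the opposite sign for $a$- and $b$-rays), and recalling that $a_e$ and $b_e$ sit on opposite sides of the plateau of $F_3|_e$, which is precisely the content of the convention designating $v(e)$ as the $r$-minimal vertex of $e$.
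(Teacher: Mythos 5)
Your proof is correct and follows the same route as the paper: reduce to $e = e'$ via the distinctness of the values $F_1(a_e) = F_1(b_e)$ from \ref{R:A-distinct}, then separate the two rays using the third coordinate, since the $a$-ray's third coordinate stays $\le F_3(a_e)$, the $b$-ray's stays $\ge F_3(b_e)$, and $F_3(a_e) = r(v(e)) < r(w(e)) = F_3(b_e)$. The extra bookkeeping on ray directions in the $e \in T$ versus $e \notin T$ cases is accurate and matches Table~\ref{tab:rays}.
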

\begin{proof}
  Because the values of $F_1(a_e) = F_1(b_e)$ are distinct, an $a$-ray can only possibly intersect the $b$-ray belonging to the same edge. But then
  \[ F_3(a_e) - \lambda \le F_3(a_e) < F_3(b_e) \le F_3(b_e) + \mu \]
  for all $\lambda, \mu \ge 0$.
\end{proof}

\begin{lemma}
  A $c$-ray cannot intersect a $d$-ray.
\end{lemma}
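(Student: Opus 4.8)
The plan is to mimic the structure of the immediately preceding lemma (``An $a$-ray cannot intersect a $b$-ray''), since a $c$-ray and a $d$-ray are set up in exactly the analogous way on each edge $e \notin T$. First I would recall from Table~\ref{tab:rays} that both the $c_e$-ray and the $d_e$-ray have direction $(0,1,0)$, so an intersection of the $c_e$-ray with the $d_{e'}$-ray would be an equation of the form
\[
 F(c_e) + \lambda(0,1,0) = F(d_{e'}) + \mu(0,1,0)
\]
for some $\lambda,\mu \ge 0$. Looking at the first coordinate gives $F_1(c_e) = F_1(d_{e'})$; by the symmetry of the pair $c_{e'},d_{e'}$ about the midpoint of $e'$ we have $F_1(d_{e'}) = F_1(c_{e'})$, and since we chose the distances $\dist_e(v(e),c_e) = F_1(c_e)$ to be distinct over all edges $e \notin T$, this forces $e = e'$.

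So the only remaining possibility is a $c$-ray meeting the $d$-ray on the \emph{same} edge $e$. For this I would look at the third coordinate: a point on $F([c_e,c_e'))$ has third coordinate $F_3(c_e)$ and a point on $F([d_e,d_e'))$ has third coordinate $F_3(d_e)$, because the direction $(0,1,0)$ does not change the third coordinate. Now $F_3$ is antisymmetric on the edge $e$ about its midpoint, so $F_3(d_e) = -F_3(c_e)$ relative to the vertex values; more precisely, $F_3(c_e) = r(v(e)) + \dist_e(v(e),c_e)$ while $F_3(d_e) = r(w(e)) - \dist_e(w(e),d_e) = r(w(e)) - \dist_e(v(e),c_e)$, and since $r(v(e)) \ne r(w(e))$ these two are unequal (indeed this is exactly the same antisymmetry argument used for $a$- and $b$-rays, and also already invoked in the lemma showing the $c$-ray meets $\Gamma$ only at $c_e$). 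Hence the third coordinates disagree and no intersection occurs.

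I do not expect any genuine obstacle here: the argument is purely a matter of reading off the first coordinate (distinctness of $F_1(c_e)$ over edges) to reduce to a single edge, then reading off the third coordinate (antisymmetry of $F_3$, i.e.\ $r(v(e)) \neq r(w(e))$) to finish. The only mild subtlety worth stating carefully is that $c$-rays and $d$-rays only exist for edges $e \notin T$, so the quantifiers range over $E \setminus T$ throughout; this matches the hypotheses under which the distinctness condition on the $F_1(c_e)$ was imposed in Section~\ref{sec:distinctness}. If one wanted to be maximally uniform, one could simply say: ``This is identical to the previous lemma, replacing $a_e,b_e$ by $c_e,d_e$, using distinctness of $F_1(c_e)$ in place of \ref{R:A-distinct} and the antisymmetry of $F_3$ on $[c_e,d_e]$ in place of that on $[a_e,b_e]$.''
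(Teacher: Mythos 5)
Your argument is correct and takes essentially the same route as the paper's proof: the distinctness of the values $F_1(c_e)$ over edges forces any intersecting $c$- and $d$-rays to belong to the same edge, and then $F_3(c_e) \neq F_3(d_e)$ (indeed $F_3(c_e) < F_3(d_e)$) rules out the intersection. One tiny slip worth noting: since $c_e \in [v(e), a_e]$, where $F_3$ is constant, one actually has $F_3(c_e) = r(v(e))$ rather than $r(v(e)) + \dist_e(v(e),c_e)$, but this does not affect your conclusion, as either way $F_3(d_e) - F_3(c_e) = r(w(e)) - r(v(e)) \neq 0$.
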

\begin{proof}
  Because the values of $F_1(c_e) = F_1(d_e)$ are distinct, a $c$-ray can only possibly intersect the $d$-ray belonging to the same edge. But then $F_3(c_e) < F_3(d_e)$.
\end{proof}

\begin{lemma}
  A $p$-ray cannot intersect a $q$-ray.
\end{lemma}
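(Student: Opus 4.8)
The plan is to exploit the third coordinate, exactly as in the proofs that a $c$-ray cannot meet a $d$-ray and that an $a$-ray cannot meet a $b$-ray. Write a point of the $p$-ray attached to $e$ as $F(p_e) + \lambda(1, s(e), 0)$ with $\lambda \ge 0$, and a point of the $q$-ray attached to $e'$ as $F(q_{e'}) + \mu(1, s(e'), 0)$ with $\mu \ge 0$; these are the directions recorded in Table~\ref{tab:rays}. An intersection would force equality of all three coordinates.

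First I would look only at the third coordinate. Since the direction vectors of both the $p$-ray and the $q$-ray have vanishing third entry, the third coordinate along the $p$-ray is constantly $F_3(p_e)$ and along the $q$-ray is constantly $F_3(q_{e'})$. Hence an intersection would yield $F_3(p_e) = F_3(q_{e'})$.

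But the last bullet point of Section~\ref{sec:distinctness} requires precisely that $F_3(p_e) \ne F_3(q_{e'})$ for all $e, e' \in E$. This contradiction shows no $p$-ray meets any $q$-ray, which completes the proof.

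There is no real obstacle here: unlike the comparisons involving $F_2$ (Lemmas using \ref{S:c}, \ref{S:disjoint}, \ref{S:F2lambda}), this case is settled outright by one of the distinctness conditions imposed on the locations of the points, so the argument is a one-line coordinate check. The only thing to be careful about is to invoke the correct distinctness hypothesis — the one comparing $F_3$-values of $p$-points against $F_3$-values of $q$-points, rather than the separate hypotheses that the $F_3(p_e)$ are mutually distinct and the $F_3(q_e)$ are mutually distinct (which were already used for the earlier two lemmas about distinct $p$-rays and distinct $q$-rays).
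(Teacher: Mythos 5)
Your proof is correct, and it is in fact tighter than the paper's own argument. The paper proceeds in two steps: it first claims that because the values $F_1(p_e)=F_1(q_e)$ are distinct across edges, a $p$-ray can only possibly meet the $q$-ray of the \emph{same} edge, and only then compares third coordinates via $F_3(p_e)<F_3(q_e)$. That first reduction is delicate for these particular rays: unlike the $a$-, $b$-, $c$-, $d$-rays, whose direction vectors have vanishing first entry, the $p$- and $q$-rays travel in direction $(1,s(e),0)$, so the first coordinate is \emph{not} constant along them, and distinctness of the starting values $F_1(p_e)$ does not by itself preclude a crossing between rays attached to different edges. Your argument sidesteps this entirely: since the third entry of the direction vector is zero for both rays, the third coordinate is constant along each, so an intersection would force $F_3(p_e)=F_3(q_{e'})$, which is excluded by the last bullet of \S\ref{sec:distinctness} for \emph{all} pairs $e,e'$, not merely $e=e'$. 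You correctly identify that this global condition (rather than the separate mutual-distinctness conditions on the $F_3(p_e)$ and on the $F_3(q_e)$) is the one doing the work here; it was imposed in the parameter choices precisely to make this one-line coordinate check go through.
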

\begin{proof}
  Because the values of $F_1(p_e) = F_1(q_e)$ are distinct, a $p$-ray can only possibly intersect the $q$-ray belonging to the same edge. But then $F_3(p_e) < F_3(q_e)$.
\end{proof}

\begin{lemma}
Two distinct vertex rays do not intersect.
\end{lemma}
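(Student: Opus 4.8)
The plan is to separate the vertex rays by their third coordinate. By Table~\ref{tab:rays}, the infinite ray attached to a vertex $v \in V(G)$ has direction $(-\deg(v), -\sum_{e \ni v} s(e), 0)$; since we have normalized $F_1$ and $F_2$ to vanish on $V$, its starting point in $\R^3$ is $F(v) = (0,0,r(v))$. Consequently every point of this ray is of the form $(-\lambda\deg(v), -\lambda\sum_{e \ni v}s(e), r(v))$ for some $\lambda \ge 0$, and in particular $F_3$ takes the constant value $r(v)$ all along it.

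The only point that needs checking is that the third component of the direction vector is genuinely $0$, i.e.\ that $\log\abs{f_3}$ has slope zero on the ray attached to $v$. This follows from the recipe used in the proof of Proposition~\ref{prop inj implies faithful}: the slope of $\log\abs{f_i}$ along the infinite ray based at a point $z$ equals the coefficient of $D_i$ at $z$, and the vertices of $G$ never appear in the support of $D_3 = \sum_{e \in E} a_e - b_e$ (that support consists of interior points of edges).

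Granting this, suppose the rays based at two vertices $v$ and $v'$ shared a common point of $\R^3$. Comparing third coordinates forces $r(v) = r(v')$, whence $v = v'$ by \ref{R:distinct}, contradicting the assumption that the two rays are distinct. There is no real obstacle in this argument: its entire content is that $F_3$ is constant on each vertex ray together with the distinctness of the values $r(v)$.
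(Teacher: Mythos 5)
Your proof is correct and follows exactly the paper's argument: the third coordinate along a vertex ray is constantly $F_3(v) = r(v)$, and condition \ref{R:distinct} forces these values to differ for distinct vertices. The extra verification that the direction vector has vanishing third component (since vertices do not lie in the support of $D_3$) is a sound and welcome addition, but the core reasoning is identical.
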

\begin{proof}
Note that the third coordinate of a vertex ray is $F_3(v) = r(v)$ and these values are distinct by \ref{R:distinct}.
\end{proof}

\begin{lemma} \label{vertex c a p}
A vertex ray does not intersect an $c, d, a, b, p$, or $q$-ray.
\end{lemma}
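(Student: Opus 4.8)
A vertex ray does not intersect a $c, d, a, b, p$, or $q$-ray.

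The plan is to compare the images of the two rays coordinate by coordinate, exactly as in the preceding sequence of lemmas, and to show that in each case one of the three coordinates already gives a contradiction. Recall from Table~\ref{tab:rays} that the vertex ray at $v$ has direction $(-\deg(v), -\sum_{e \ni v} s(e), 0)$ starting at $F(v) = (0,0,r(v))$, so every point on it has first coordinate $-\deg(v)\mu \le 0$ and third coordinate $r(v)$. First I would dispatch the comparisons with $c$-, $d$-, $a$-, $b$- and $p$-rays whose first coordinate is bounded away from $0$ from below: the $c$-ray at $c_e$ has constant first coordinate $F_1(c_e) > 0$, the $d$-ray likewise has first coordinate $F_1(d_e) > 0$, the $a$- and $b$-rays have constant first coordinate $F_1(a_e) > 0$, and the $p$-ray has first coordinate $F_1(p_e) + \lambda > 0$; none of these can ever equal a nonpositive number, so no intersection is possible.

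The only ray whose first coordinate can be $0$ (and hence could match a vertex ray with $\deg(v)$ and $\mu$ suitably combined, i.e. at $\mu = 0$ giving first coordinate $0$) is the $q$-ray — but in fact the $q$-ray at $q_e$ has first coordinate $F_1(q_e) + \lambda > 0$ as well, so the same argument applies. Wait — more carefully: a vertex ray meets first coordinate $0$ only at its endpoint $\mu = 0$, i.e. at the point $F(v)$ itself, which has $F_1 = 0$, while every $c$-, $d$-, $a$-, $b$-, $p$- and $q$-ray lies entirely in the region where $F_1 > 0$. Hence the image of a vertex ray and the image of any such ray are separated by the hyperplane $\{F_1 = 0\}$ (with the vertex ray on the side $F_1 \le 0$ and touching the hyperplane only at a vertex, which is not on any of the other rays since those rays start at the interior points $c_e, a_e$, etc.), and they cannot intersect.

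I expect this lemma to be essentially immediate; the only mild subtlety — the one point worth spelling out — is handling the endpoint $\mu = 0$ of the vertex ray, where $F_1 = 0$. There one simply observes that $F(v)$ is the image of a vertex, whereas each of the $c, d, a, b, p, q$-rays, being attached at the interior points of edges, has image disjoint from $F(V)$ by Lemma~\ref{lem:injective-on-Gamma} together with the fact that those rays meet $F(\Gamma)$ only at their own basepoints (established in the earlier lemmas of this subsection). So no part of the argument is a real obstacle; it is purely a matter of checking first coordinates and treating the boundary case.
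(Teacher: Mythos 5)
Your proof is correct and is essentially the paper's argument: both compare first coordinates, noting that a vertex ray has $F_1 = -\lambda\deg(v) \le 0$ while every $c,d,a,b,p,q$-ray lies entirely in the region $F_1 > 0$ (since the basepoints $c_e, a_e, p_e$, etc.\ are interior points of their edges and the ray directions never decrease the first coordinate below its starting value). The extra care you take with the endpoint $\mu = 0$ is unnecessary given the strict inequality $0 < F_1(c_e)$, but it does no harm.
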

\begin{proof}
Note that the first coordinate of a vertex ray is
\[ F_1(v) - \lambda \deg(v) = - \lambda \deg(v) \le 0 < F_1(c_e) < F_1(a_e) < F_1(p_e). \qedhere
\]
\end{proof}

\section{Fully and totally faithfulness} \label{sec:compactifications}

In this section we prove Theorem \ref{ThmA} from the introduction.
The majority of the work was done in the previous section.
In this section we show that all the assumptions we made there
can actually be achieved.
We fix a Mumford curve $X$.

\begin{theorem} \label{thm totally faithful}
Let $Y$ be a proper toric variety of dimension three.
Then there exists a morphism $\varphi \colon X \to Y$
such that the induced tropicalization is totally faithful.
\end{theorem}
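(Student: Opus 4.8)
The plan is to reduce Theorem~\ref{thm totally faithful} to the machinery already assembled in Sections~\ref{sec:construction}--\ref{sec:injective}, whose one remaining gap is that those sections presuppose a choice of parameters satisfying the interval condition, \ref{R:distinct}, \ref{R:A-distinct}, and \ref{S:distinct}--\ref{S:coprime}. So the entire content of this proof is to exhibit such parameters for an arbitrary Mumford curve $X$; once they are in hand, Proposition~\ref{prop choosing parameters} gives injectivity of $\trop_\varphi|_{\mathring\Sigma}$, and Proposition~\ref{prop inj implies faithful} (using \ref{S:coprime} at each vertex of $\Sigma(\varphi)$) upgrades this to total faithfulness.

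First I would fix a graph model $G=(V,E)$ of the minimal skeleton $\Gamma$, subdividing as in the opening of Section~\ref{sec:construction} so that the non-tree edges $e_1,\dots,e_g$ are pairwise non-adjacent; a further subdivision makes all edges as long as we like, which is convenient for the interval condition. Then I would choose the data in the order in which later conditions depend on earlier ones. Step one: pick the constants $r(v)$, $v\in V$, to be $\Lambda$-rational, pairwise distinct (\ref{R:distinct}), and with $|r(w)-r(v)|$ strictly less than $\ell(vw)$ for every edge $e=vw$ (shrinkable by subdividing); record $\delta:=\max_v r(v)-\min_v r(v)$. Step two: with $v(e),w(e)$ now determined as the endpoints with smaller/larger $r$, place $a_e,b_e$ symmetrically about the midpoint at distance $\tfrac12(\ell(e)-|r(w)-r(v)|)$ from the midpoint, so that $\dist_e(a_e,b_e)=|r(w)-r(v)|$ as required, and perturb within a $\Lambda$-rational interval to make the $F_1(a_e)=\dist_e(a_e,v(e))$ pairwise distinct (\ref{R:A-distinct}); then choose $c_e,d_e$ (symmetric, closer to the vertices than $a_e$) with the $F_1(c_e)$ distinct for $e\notin T$, and $p_e,q_e$ (symmetric, between $a_e$ and the midpoint) with the $F_3(p_e)=r(v(e))+\dist_e(p_e,a_e)$ and $F_3(q_e)$ all distinct and with $F_3(p_e)\ne F_3(q_{e'})$ — all finitely many avoidances among dense $\Lambda$-rational points, as the note after Section~\ref{sec:distinctness} observes. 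One only needs to check that the interval condition can be met simultaneously with these distinctness requirements, which is immediate after enough subdivision since each "type" of point ranges over its own subinterval.

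The last and only genuinely delicate step is choosing the integers $s(e)>1$ to satisfy \ref{S:distinct}--\ref{S:coprime}. Here I would exploit that the value of $F_2$ on $e$ is governed near the vertices by the slopes $s(e)$ of $F_1$, so that $F_2(p_e)\approx s(e)F_1(p_e)$ and, crucially, $F_2(c_{e'})$ for $e'\notin T$ is of order $s(e')F_1(c_{e'})$: by choosing the $s(e)$ to grow extremely rapidly — e.g.\ inductively, ordering the edges and taking each new $s(e)$ a large prime multiple of everything chosen so far, large enough that the relevant gaps in \ref{S:c}, \ref{S:disjoint}, \ref{S:F2lambda} each exceed the fixed quantity $\delta$ — all four "largeness/disjointness" conditions become automatic, while distinctness \ref{S:distinct} and \ref{S:hordistinct} are cheap. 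The coprimality \ref{S:coprime}, that $\deg(v)$ is coprime to $\sum_{e\ni v}s(e)$ for every $v$, is the one constraint that couples the $s(e)$ across a vertex; I expect this to be the main obstacle, and I would handle it by a final adjustment — after all size requirements are secured, replace each $s(e)$ by $s(e)+\varepsilon_e$ for small corrections, or simply re-run the greedy construction only over residues, observing that for each vertex $v$ the congruence class of $\sum_{e\ni v}s(e)$ modulo $\deg(v)$ can be forced into the units by choosing the $s(e)$ appropriately (e.g.\ make all but one $s(e)$ at $v$ divisible by $\mathrm{lcm}_v\deg(v)$ and the last one coprime to $\deg(v)$), which is compatible with, and does not disturb, the earlier magnitude and distinctness choices because those are open conditions. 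With parameters so chosen, Propositions~\ref{prop choosing parameters} and~\ref{prop inj implies faithful} apply verbatim and yield a morphism $\varphi\colon X\to Y$ inducing a totally faithful tropicalization, completing the proof.
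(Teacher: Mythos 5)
Your overall reduction is exactly the paper's: fix a graph model, choose the marked points and the $r(v)$ by finitely many avoidances among dense $\Lambda$-rational points, choose the $s(e)$, then quote Propositions~\ref{prop choosing parameters} and~\ref{prop inj implies faithful}. You also correctly isolate the one genuinely delicate point, condition \ref{S:coprime}. But neither of your proposed mechanisms for \ref{S:coprime} works, and this is a real gap, not a formality. The congruences ``$\sum_{e\ni v}s(e)$ coprime to $\deg(v)$'' are coupled across vertices through shared edges and need not be simultaneously solvable on the given graph model: for a triangle (all degrees $2$) one would need all three pairwise sums $s(e)+s(e')$ to be odd, yet these three sums add up to the even number $2\bigl(s(e_1)+s(e_2)+s(e_3)\bigr)$, while three odd numbers have odd sum. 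So no choice of residues works; the graph itself must be modified. Your explicit scheme --- at each vertex make all but one incident $s(e)$ divisible by $\lcm_v\deg(v)$ and the remaining one coprime to $\deg(v)$ --- has the same problem in disguise: an edge $e=vw$ that is ``special'' at $v$ but not at $w$ would have to be simultaneously coprime to $\deg(v)$ and divisible by it, so the special edges must form a perfect matching of $G$, which many graphs (e.g.\ any graph with an odd number of vertices) do not admit.

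The paper's fix is precisely the missing idea: first enlarge the skeleton by a leaf edge (Lemma~\ref{Sispossible}). One subdivides at a point $z$, roots the graph at $z$, and adjusts greedily: every vertex $v\neq z$ has a dedicated edge $e_v$ pointing strictly closer to $z$, these dedicated edges are pairwise distinct, and processing vertices from farthest to nearest fixes \ref{S:coprime} at $v$ by changing $s(e_v)$ without disturbing vertices already handled; the root $z$ has no dedicated edge left, which is exactly why a new leaf edge is attached at $z$. The size conditions \ref{S:distinct}--\ref{S:F2lambda} are then secured afterwards by adding large multiples of $\prod_v\deg(v)$, which preserves all residues. Your ``make the $s(e)$ grow rapidly'' step is fine and matches the paper; it is the coprimality step that needs the leaf edge and the rooted greedy argument.
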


\begin{proof}
Let $\Gamma$ be a finite skeleton of $X$.
By simply adding a leaf edge to $\Gamma$, we may assume that $\Gamma$ has a leaf edge.
We pick a graph model $G = (V, E)$ for the $\Lambda$-metric graph $\Gamma$,
and we chose the points $c_e,a_e, p_e, q_e, b_e, d_e$
satisfying the interval condition, and we pick values $r(v)$
such that \ref{R:distinct} and \ref{R:A-distinct} are satisfied.
Now since we assumed that $\Gamma$ has a leaf edge, Lemma \ref{Sispossible}
shows that we can pick $s(e)$ for $e \in E$ such that
 \ref{S:distinct}--\ref{S:coprime} are satisfied.

The rational functions $f_1,f_2,f_3$ constructed in Propositions \ref{D3 lift}, \ref{D1 lift} and \ref{D2 lift}
define a rational map $X \to \G_m^3$.
Identifying the dense torus of $Y$ with $\G_m^3$ and using the fact that both $X$ and $Y$ are proper,
we obtain a morphism $\varphi \colon X \to Y$.

By Proposition \ref{prop choosing parameters}, the map $\trop_{\varphi}\!|_{\mathring \Sigma(\varphi)}$ is injective.
By Proposition \ref{prop inj implies faithful}, this means that $\trop_{\varphi}$ is totally faithful.
\end{proof}

\begin{lemma} \label{Sispossible}
If $\Gamma$ has a leaf edge, it is possible to pick $s(e)$ in a way such that
they satisfy \ref{S:distinct}--\ref{S:coprime}.
\end{lemma}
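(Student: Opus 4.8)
The conditions \ref{S:distinct}--\ref{S:coprime} are a finite list of constraints on the positive integers $s(e)$, and the plan is to satisfy them one at a time, exploiting the fact that each $s(e)$ can be taken arbitrarily large. The key observation is that all of \ref{S:distinct}, \ref{S:hordistinct}, \ref{S:c}, \ref{S:disjoint}, \ref{S:F2lambda} are "separation'' conditions: they assert that certain quantities built from the $s(e)$ are distinct, or that certain gaps exceed the fixed constant $\max F_3 - \min F_3$ (which depends only on the already-chosen $r(v)$). Since the points $c_e, p_e, q_e$ are fixed and the values $F_1(c_e), F_1(p_e), F_1(q_e)$ are positive, each $s(e)$ enters the relevant expressions linearly with a positive coefficient, so making the $s(e)$ grow suitably fast forces all these separations. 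The one genuinely different condition is \ref{S:coprime}, which is a congruence condition rather than a growth condition, and this is where the leaf edge is used.

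First I would order the edges $e_1, \dots, e_m$ of $G$ so that all edges outside the spanning tree $T$ come first, and more importantly so that for each vertex $v$ there is a distinguished edge $\iota(v)$ incident to $v$ that is last among the edges at $v$ in this ordering — this is arranged using the leaf edge: process the vertices in an order compatible with a spanning-tree structure, so that each vertex acquires a "private'' edge that has not yet been assigned; the leaf edge guarantees the base of this induction works. Then I would choose $s(e_1), s(e_2), \dots$ inductively. When choosing $s(e_k)$, finitely many previous values have been fixed; first pick $s(e_k)$ large enough (and, say, distinct from all previous values) that all the inequalities in \ref{S:hordistinct}, \ref{S:c}, \ref{S:disjoint}, \ref{S:F2lambda} that involve $e_k$ and only already-chosen edges are satisfied — this is possible because each such inequality is of the form "a linear expression in $s(e_k)$ with positive leading coefficient exceeds a fixed bound'' or "is distinct from a fixed finite set''. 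This handles \ref{S:distinct} simultaneously.

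The remaining point is \ref{S:coprime}: for each vertex $v$, $\deg(v)$ must be coprime to $\sum_{e \ni v} s(e)$. Here I use the distinguished edges: when it comes time to fix $s(\iota(v))$ — which by construction is the last of the edges at $v$ to be assigned — the other summands $\sum_{e \ni v,\, e \ne \iota(v)} s(e)$ are already determined, call this partial sum $\sigma_v$. I then need $s(\iota(v))$ to lie in a residue class mod $\deg(v)$ making $\sigma_v + s(\iota(v))$ coprime to $\deg(v)$; since the residues coprime to $\deg(v)$ form a nonempty set and I may still choose $s(\iota(v))$ arbitrarily large within a fixed congruence class, I can meet this congruence while also meeting all the growth/distinctness requirements above (an arithmetic progression contains arbitrarily large integers, and avoids only finitely many forbidden values). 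Finally \ref{S:hordistinct} for edge $e_k$ versus a not-yet-chosen edge, and \ref{S:c}/\ref{S:disjoint}/\ref{S:F2lambda} involving pairs where $e_k$ is the earlier edge, are automatically handled when the later edge is processed, because the later edge's $s$-value is the one taken large.

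The main obstacle is bookkeeping: making sure the induction order is set up so that (a) for \ref{S:c}, \ref{S:disjoint}, \ref{S:F2lambda}, which compare pairs of edges, the "large'' edge in each comparison is processed later, and (b) each vertex has a private last edge for the coprimality step, and that these two requirements are compatible. The leaf edge is exactly what makes (b) — and hence the whole scheme — feasible: without it, the dependency graph of "private edges'' could have a cycle and one could not always find, for every vertex, an incident edge not already claimed.
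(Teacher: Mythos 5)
Your overall strategy---give each vertex a ``private'' incident edge, last in the processing order, whose value is adjusted to meet the congruence in \ref{S:coprime}, with the leaf edge supplying the private edge for the root---is the same device the paper uses. But your single-pass greedy induction has a genuine unresolved conflict, which you yourself flag as ``the main obstacle'' without resolving, and which in fact cannot be resolved as stated. Condition \ref{S:c} forces $s(e')F_1(c_{e'}) > s(e)F_1(p_e) + (\max F_3 - \min F_3)$ for every tree edge $e$ and non-tree edge $e'$, and since $F_1(c_{e'}) < F_1(p_e)$ by the interval condition, every non-tree edge must receive a strictly larger $s$-value than every tree edge. In your scheme the later-processed edge is the one taken large, so the non-tree edges must come \emph{last}---the opposite of the ordering you propose. (With non-tree edges first, the constraint that \ref{S:c} imposes on a later tree edge $e_k$ is an \emph{upper} bound on $s(e_k)$, not a lower bound of the form you describe.) But if the non-tree edges come last, then a non-tree edge $e_i = vw$ is the last incident edge at \emph{both} of its endpoints (by the standing assumption, neither is incident to another non-tree edge), so $\iota$ cannot be injective there; and imposing both congruences on $s(e_i)$ simultaneously is not always solvable (e.g.\ when $\deg(v)$ and $\deg(w)$ are both even and the partial sums $\sigma_v, \sigma_w$ have opposite parities).

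The paper escapes this by decoupling the two kinds of conditions rather than interleaving them. First it satisfies \ref{S:coprime} alone, with values of arbitrary size: root the graph at a subdivision point $z$, adjust for each $v \neq z$ the $s$-value of an edge leading toward $z$ (processing vertices from far to near), and use the added leaf edge at $z$ to fix the congruence at $z$ last. Then it adds to each $s(e)$ a suitable multiple of $\prod_{v} \deg(v)$; this leaves every sum $\sum_{e \ni v} s(e)$ unchanged modulo $\deg(v)$, so \ref{S:coprime} is preserved, while the multiples can be chosen large and widely spaced so that \ref{S:distinct}--\ref{S:F2lambda} hold. That last observation---that the growth conditions can be met inside prescribed congruence classes for \emph{all} the $s(e)$ at once, after the congruences have already been fixed---is the idea your write-up is missing.
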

\begin{proof}
Let us focus on \ref{S:coprime} first.
Pick any set of numbers $s(e)$ for all $e \in E$.
We pick a point $z$ that lies in the interior of an edge and subdivide $\Gamma$ by introducing $z$ as a vertex.
Let $v$ and $w$ be two vertices of $\Gamma$, joint by an edge $e$.
Note that one can always achieve that \ref{S:coprime} holds at $v$ by changing $s(e)$ an appropriate amount.

Note further that for any vertex $v$ except $z$, their exists a vertex $w$ that lies closer to $z$ that $v$.
For every $v$ fix such a choice $w_v$.
Now working ones way closer to $z$, by each time changing $s(e_v)$,
where $e_v$ is the edge joining $v$ and $w_v$,
we get $S(6)$ to hold for all vertices except $z$.
We now add a leaf edge $e$ at $z$ and are done,
since we can pick $s(e)$ in a way such that \ref{S:coprime} holds at $z$.

The other properties can all be achieved by making the $s(e)$ very large
with large differences between them.
This can be achieved by adding multiples of $\prod_{v \in \Gamma} \deg(v)$ to the $s(e)$,
so they remain coprime.
\end{proof}

Now let us take a closer look at two particular toric varieties: $\Proj^3$ and $(\Proj^1)^3$.
The functions $f_1, f_2, f_3$ are the ones constructed in Propositions \ref{D3 lift}, \ref{D1 lift} and \ref{D2 lift}
with the parameters chosen as in Section \ref{sec:params}.

\begin{proposition}
Let $\varphi \colon X \to \Proj^3; \; \; x \mapsto [f_1(x) : f_2(x) : f_3(x) : 1]$.
Then the induced tropicalization is \emph{not} fully faithful.
\end{proposition}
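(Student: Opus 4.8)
The plan is to exhibit two distinct points of the completed extended skeleton $\Sigma(\varphi)$ on which $\trop_{\varphi}$ agrees; by the description of fully faithfulness recalled in Section~\ref{sec:ff-and-smooth}, this suffices. Concretely, I would fix an arbitrary edge $e$ of $G$ and use the two type~I points $p'_e$ and $q'_e$ constructed in Proposition~\ref{D1 lift}, together with the infinite rays $[p_e,p'_e)$ and $[q_e,q'_e)$ they span in $\Sigma(\varphi)$. These points lie in $\Sigma(\varphi)$ because they are poles of $f_1$ (and of $f_2$), and they are distinct because they retract to the distinct points $p_e\neq q_e$ of $\Gamma$.

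Next I would compute the images $\trop_{\varphi}(p'_e)$ and $\trop_{\varphi}(q'_e)$ in $\mb{TP}^3$. Reading off the coefficient of $p_e$ in $D_1$, $D_2$, $D_3$ gives $\operatorname{ord}_{p'_e}(f_1)=-1$, $\operatorname{ord}_{p'_e}(f_2)=-s(e)$ and $\operatorname{ord}_{p'_e}(f_3)=0$; since $s(e)>1$, the pole of largest order is that of $f_2$, so after dividing the homogeneous coordinates of $\varphi=[f_1:f_2:f_3:1]$ by $f_2$ one obtains
\[
\varphi(p'_e)=\left[\tfrac{f_1}{f_2}(p'_e):1:\tfrac{f_3}{f_2}(p'_e):\tfrac{1}{f_2}(p'_e)\right]=[0:1:0:0]\in\Proj^3 ,
\]
whence $\trop_{\varphi}(p'_e)=[-\infty:0:-\infty:-\infty]$; this matches the $\mb{TP}^3$ entry for the $p_e$-ray in Table~\ref{tab:rays}, once one notes that $[-\infty:t:-\infty:-\infty]$ does not depend on $t\in\R$. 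The point $q_e$ appears in $D_1,D_2,D_3$ with the same coefficients as $p_e$, so the identical computation gives $\trop_{\varphi}(q'_e)=[-\infty:0:-\infty:-\infty]$ as well.

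Hence $\trop_{\varphi}(p'_e)=\trop_{\varphi}(q'_e)$ while $p'_e\neq q'_e$, so $\trop_{\varphi}|_{\Sigma(\varphi)}$ is not injective and the tropicalization is not fully faithful. I do not expect a genuine obstacle here: the entire content is that the deepest boundary stratum of $\mb{TP}^3$, where three of the four tropical coordinates equal $-\infty$, is a single point, whereas the target $(\mb{TP}^1)^3$ used in Theorem~\ref{ThmA} keeps the three coordinates separate and so can pull these rays apart. The only point requiring a little care is the coordinate bookkeeping in passing to a chart in which $f_1,f_2,f_3$ are regular at $p'_e$ and $q'_e$ --- the routine step of clearing the pole of largest order, which is where the hypothesis $s(e)>1$ enters.
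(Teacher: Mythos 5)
Your argument is correct and is exactly the paper's (very terse) argument: Table~\ref{tab:rays} lists the $\mb{TP}^3$-limits of the $p_e$- and $q_e$-rays as $[-\infty:F_2(p_e):-\infty:-\infty]$ and $[-\infty:F_2(q_e):-\infty:-\infty]$, which are the same point of $\mb{TP}^3$ since a point with a single finite homogeneous coordinate is independent of that coordinate's value. Your explicit computation of $\varphi(p'_e)=\varphi(q'_e)=[0:1:0:0]$ just makes the table entry, and hence the failure of injectivity on $\Sigma(\varphi)$, fully explicit.
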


\begin{theorem} \label{thm fully faithful}
Let $\varphi \colon X \to (\Proj^1)^3; \; \; x \mapsto (f_1(x), f_2(x), f_3(x))$.
Then the induced tropicalization is fully faithful.
\end{theorem}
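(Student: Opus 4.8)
The plan is to deduce fully faithfulness from the totally faithful case (Theorem~\ref{thm totally faithful}) by analyzing the behaviour of $\trop_\varphi$ on the infinite rays of $\Sigma(\varphi)$, using the last column of Table~\ref{tab:rays}, which records the limits of each ray in $(\mb{TP}^1)^3$. By Proposition~\ref{prop inj implies faithful}, since $\trop_\varphi\!|_{\mathring\Sigma(\varphi)}$ is already injective (Proposition~\ref{prop choosing parameters}), it suffices to check that $\trop_\varphi\!|_{\Sigma(\varphi)}$ is injective; equivalently, that $\trop_\varphi$ separates the type~I points of $\Sigma(\varphi)$ from each other and from $\mathring\Sigma(\varphi)$. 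Concretely, $\Sigma(\varphi)$ is $\Gamma$ with an infinite ray attached at each of the points $c_e, a_e, p_e, q_e, b_e, d_e$ (for the relevant edges) and at each vertex $v$; I must show the endpoints at infinity of these rays, as points of $(\mb{TP}^1)^3$, are pairwise distinct, and that no interior point of any ray maps into $\trop_\varphi(\Gamma)$ and no two interior points of distinct rays collide.

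First I would observe that the interior-point statements are \emph{already proved}: Lemmas in Section~\ref{sec:injective} show each of the $a,b,c,d,p,q$-rays meets $F(\Gamma)$ only at its basepoint and that no two distinct rays of these types meet in $\R^3$, and the vertex-ray lemmas handle the vertex rays similarly. (These lemmas are phrased for $\R^3 = \mb{TP}^3 \setminus \partial$, but the rays stay in $\R^3$ until their limit point, so the finite parts of distinct rays in $(\mb{TP}^1)^3$ agree with those in $\mb{TP}^3$.) So the only genuinely new content is comparing the \emph{limit points at infinity} in $(\mb{TP}^1)^3$, and checking that a limit point of one ray does not coincide with a finite point or with another ray's limit point. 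For each pair of ray-types one reads off the fourth column of Table~\ref{tab:rays} and checks that the pattern of which coordinates equal $\pm\infty$, together with the finite coordinates, forces the rays to be equal. For instance a $c$-ray limits to $(F_1(c_e), \infty, F_3(c_e))$ while a $d$-ray limits to $(F_1(d_e), \infty, F_3(d_e))$; since $F_1(c_e)$ are distinct across edges and $F_3$ is antisymmetric on $[c_e,d_e]$ with $F_3(c_e)\neq F_3(d_e)$ (conditions in Section~\ref{sec:distinctness}), these are all distinct. The $p$- and $q$-rays limit to $(\infty,\infty,F_3(p_e))$ and $(\infty,\infty,F_3(q_e))$, distinct by the distinctness requirements on $F_3(p_e), F_3(q_e)$ and the condition $F_3(p_e)\neq F_3(q_{e'})$. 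The $a$- and $b$-rays split into two sub-cases ($e\in T$ versus $e\notin T$), and one uses \ref{R:A-distinct} (distinctness of $F_1(a_e)=F_1(b_e)$) together with the values of $F_2, F_3$ to separate them; the $e\in T$ $a$-ray limits to $(F_1(a_e),F_2(a_e),-\infty)$ whereas for $e\notin T$ it limits to $(F_1(a_e),-\infty,-\infty)$, so the finite/infinite pattern in the second coordinate already distinguishes those two families, and within each family $F_1(a_e)$ does the job. Finally the vertex rays limit to $(-\infty,-\infty,F_3(v))$, pairwise distinct by \ref{R:distinct}, and they are distinguished from every other ray type either by the finite/infinite pattern or, when ambiguous, by the remaining finite coordinate.

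I also need to confirm that the limit point of a ray does not land on $\trop_\varphi(\Gamma)$. But $\trop_\varphi(\Gamma) = F(\Gamma) \subseteq \R^3$ (all three of $F_1, F_2, F_3$ are finite piecewise-linear functions on the compact graph $\Gamma$), so its image lies entirely in the dense torus $\R^3\subset(\mb{TP}^1)^3$, whereas every ray limit point in Table~\ref{tab:rays} has at least one coordinate equal to $\pm\infty$. Hence no collision is possible there. Assembling: $\trop_\varphi$ is injective on $\mathring\Sigma(\varphi)$, injective on the set of type~I points of $\Sigma(\varphi)$, and the images of these two sets are disjoint, so $\trop_\varphi\!|_{\Sigma(\varphi)}$ is injective, and by Proposition~\ref{prop inj implies faithful} the tropicalization is fully faithful. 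The main obstacle I anticipate is purely bookkeeping: the case analysis in the second paragraph has many pairs $(a,b,c,d,p,q,\text{vertex})\times(T,\notin T)$ and one must be careful that in each case \emph{some} coordinate — finite or the $\pm\infty$-pattern — actually separates the two rays; the condition \ref{S:coprime} is what guarantees the stretching factors are $1$ (via Proposition~\ref{prop inj implies faithful}), and one should double-check it is in force for this particular $\varphi$ into $(\Proj^1)^3$, which it is since we chose the parameters exactly as in Section~\ref{sec:params}.
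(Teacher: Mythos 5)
Your proposal is correct and follows essentially the same route as the paper: the paper's (very terse) proof simply cites the last column of Table~\ref{tab:rays} together with the distinctness requirements of Section~\ref{sec:distinctness} to conclude that the limit points of the rays in $(\mb{TP}^1)^3$ are pairwise distinct, with the finite part of the injectivity already supplied by Section~\ref{sec:injective} and the passage to fully faithfulness by Proposition~\ref{prop inj implies faithful}. You have merely written out the case analysis that the paper leaves implicit, and your bookkeeping (the $\pm\infty$-patterns distinguishing ray families, and the finite coordinates distinguishing rays within a family) matches what the table and the conditions \ref{R:distinct}, \ref{R:A-distinct} and Section~\ref{sec:distinctness} provide.
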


\begin{proof}
Both these statements follow from Table~\ref{tab:rays} that lists the endpoints
of the rays in the respective compactifications together with the requirements of Section~\ref{sec:distinctness} that force the endpoints to be distinct.
\end{proof}

\section{Resolution of singularities}\label{sec:res-of-singularities}

\subsection{A conceptual approach}

Throughout this section, we fix a Mumford curve $X$
and a morphism $\varphi \colon X \to Y$ for a toric variety $Y$
that induces a fully faithful tropicalization.

\begin{definition}\label{def:local-degree-of-non-smoothness}
  Let $\Trop_{\vf}(X)$ be the corresponding tropical curve in $\R^n$ and let $x \in \Trop_\vf(X)$.
  We define the \emph{local degree of non-smoothness} of $\Trop_\vf(X)$ at $x$ to be
  \begin{align}
  \label{eq local degree of smoothness}
  n_{\vf}(x) = \deg(x) - 1 - \max \{ k \mid &\text{ tangent vectors } v_1,\dots,v_k \\
  \nonumber 				                        &\text{ span a saturated lattice of rank } k\}.
  \end{align}
\end{definition}

\begin{note}
Consider the tropical curve in Figure \ref{fig:wagner-construction}.
The circled point $x$ has degree $4$, one can find two tangent vectors that span $\Z^2$,
but any three will still span $\Z^2$.
We conclude that $n_\varphi(x) = 1$.

In general,  $x$ is a smooth point if and only if $n_{\vf}(x) = 0$.
\end{note}

\begin{theorem}\label{thm:resolution}
With notation as above, there exists a rational function $f$ on $X$ such that if we denote by
$\varphi' \colon X \to Y \times \Proj^1, x \mapsto (\varphi(x), f(x))$
the associated embedding, $\varphi'$ is fully faithful and
\begin{align*}
n_{\varphi'}(z) =
\begin{cases} n_{\varphi}(z) -1 &\text{ if } n_{\varphi}(z) > 0 \\
0 &\text{ if } n_{\varphi}(z) = 0
\end{cases}
\end{align*}
for all $z \in \Sigma(\varphi)$.
\end{theorem}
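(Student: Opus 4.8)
The goal is to construct a single rational function $f$ on $X$ whose tropicalization, when adjoined to $\varphi$, lowers the local degree of non-smoothness by exactly one at every non-smooth point of $\Sigma(\varphi)$ simultaneously, while keeping the tropicalization fully faithful. The strategy is to build $f$ so that $\log|f|$ on the skeleton is, at each vertex $x$ of $\Sigma(\varphi)$, linear with a carefully prescribed slope along exactly one chosen edge (and flat elsewhere near $x$), where the prescribed slope is chosen to break one of the linear dependences among the edge directions at $\trop_\varphi(x)$. Concretely, at each vertex $x$ with $n_\varphi(x)>0$, pick a maximal collection of edge-directions $v_1,\dots,v_k$ spanning a saturated rank-$k$ sublattice; then among the remaining edges at $x$ there is at least one edge $e_x$ whose direction is dependent on the $v_i$ (over $\mathbf{Q}$) or fails saturation. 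We will make $\trop_{\varphi'}$ stretch that one edge in the new $\mathbf{P}^1$-coordinate so that the image direction of $e_x$ becomes independent of (a saturated extension of) the $v_i$, raising the maximal-independent-span by exactly one and thus dropping $n$ by one; at smooth vertices we arrange $f$ to contribute nothing.

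\textbf{Key steps, in order.} First, I would set up the combinatorial data: for each finite vertex $x$ of $\Sigma(\varphi)$ fix once and for all a witness set $\{v_1,\dots,v_{k(x)}\}$ of edge-directions at $\trop_\varphi(x)$ realizing the max in Definition~\ref{def:local-degree-of-non-smoothness}, and (when $n_\varphi(x)>0$) a distinguished further edge $e_x$ at $x$. Second, I would specify a piecewise-linear function $F$ on the (subdivided, if necessary) skeleton $\Sigma(\varphi)$: on each edge $e_x$ put a slope $m_x$, concentrated near the $x$-end of $e_x$, that is $0$ at the midpoint and opposite at the other end (so the PL function is supported locally near each relevant vertex and does not interfere across edges), and make $F$ locally constant near every other vertex. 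One must choose the slopes $m_x$ so that: (a) the new image direction of $e_x$ in $N \oplus \mathbf{Z}$ (old direction, $\pm m_x$) together with $(v_i,0)$ spans a saturated rank-$(k(x)+1)$ lattice — this is a coprimality/saturation condition on $m_x$ relative to finitely many lattice data and can be met by choosing $m_x$ suitably (e.g. coprime to relevant indices, or simply $\pm1$ after a saturation argument); (b) the perturbation does not destroy full faithfulness of the combined map, i.e. the new map $\trop_{\varphi'}|_{\Sigma(\varphi')}$ stays injective and all stretching factors stay $1$ — since stretching factors are gcds of slopes of the $\log|f_i|$ and $\varphi$ already contributes slope-$1$ data (by fully faithfulness), adjoining any integer-sloped coordinate keeps the gcd equal to $1$; injectivity on $\Sigma(\varphi')$ follows because we only added bounded bumps on finitely many edges and $\trop_\varphi$ was already injective on $\Sigma(\varphi)$, so a small enough or generic choice of breakpoints avoids the finitely many new coincidences, exactly as in Section~\ref{sec:distinctness}. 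Third — the crucial realizability step — I would invoke the lifting Theorem~\ref{lifting thm convenient} to lift $F$ to an actual rational function $f\in K(X)$ with $\log|f||_\Gamma = F$: this requires writing $\divisor(F)$ as a principal divisor on $\Gamma$ whose positive part can be arranged as a break divisor supported on $2$-valent points (subdividing edges as needed and distributing the break points onto the complement of a spanning tree), precisely as done for $D_1,D_2,D_3$ in Section~\ref{sec:construction}. Fourth, I would verify the degree count: at each vertex $x$ with $n_\varphi(x)>0$ the maximal independent span of edge-directions at $\trop_{\varphi'}(x)$ goes up by exactly one (condition (a) gives "at least one more"; it cannot go up by two since $f$ only changes one edge-direction at $x$ — all other edges at $x$ keep their image direction in $N\oplus\{0\}$, so any independent set using them projects to an independent set of the old directions, bounding the increase), hence $n_{\varphi'}(x)=n_\varphi(x)-1$; at vertices with $n_\varphi(x)=0$, $F$ is locally constant so all edge-directions get a $0$ in the new coordinate and the span is unchanged, giving $n_{\varphi'}(x)=0$. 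Finally I would note that the rays (type-I points) of $\Sigma(\varphi)$ are unaffected in the relevant sense — they are already smooth leaves — and that $\Sigma(\varphi')=\Sigma(\varphi)$ up to the chosen subdivision since adding a function that is PL on $\Gamma$ with divisor supported there does not create new zeroes or poles off $\Gamma$ beyond those we can absorb into the break-divisor bookkeeping.

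\textbf{Main obstacle.} The genuine difficulty is the simultaneity combined with liftability: I must choose the local slope data $\{m_x\}$ so that (i) each individually achieves the rank increase, (ii) their superposition keeps full faithfulness (no new collisions among the finitely many rays and skeleton points, no stretching factor $>1$), and (iii) the resulting global PL function $F$ has a divisor that can be massaged into the break-divisor-on-$2$-valent-points form demanded by Theorem~\ref{lifting thm convenient}. Steps (i) and (ii) are "finitely many generic conditions" and are handled by the density of $\Lambda$-rational points and by choosing the $m_x$ and breakpoints generically/large, exactly in the style of Section~\ref{sec:distinctness}; the real work is (iii), arranging $\divisor(F)$ as a principal divisor with the break-divisor structure, which is why the construction is phrased in terms of adding small symmetric bumps near vertices (so that $\divisor(F)$ is a sum of $\pm$-pairs on short sub-intervals that can be placed on the complement of a spanning tree). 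I expect this divisor bookkeeping — and the verification that the local rank at each vertex increases by exactly one rather than zero or two — to be the technical heart of the argument, with everything else being a routine adaptation of the genericity arguments already developed in the paper.
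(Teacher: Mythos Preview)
Your outline captures the overall strategy correctly --- build a PL function on the skeleton that adjusts tangent directions at each non-smooth vertex, lift it via Theorem~\ref{lifting thm convenient}, and verify the rank count --- and your identification of the break-divisor bookkeeping as the technical heart is accurate. However, there is a genuine gap in the local model you propose at each bad vertex.

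You specify that $F$ has slope $m_x$ along one distinguished edge $e_x$ at $x$ and is flat along every other edge incident to $x$. But then the sum of outgoing slopes of $F$ at $x$ equals $m_x\neq 0$, so $x$ lies in the support of $\divisor(F)$. After lifting, $f$ acquires a zero or pole retracting to $x$, and $\Sigma(\varphi')$ gains a new infinite ray at $x$. Hence $\deg_{\Sigma(\varphi')}(x)=\deg_{\Sigma(\varphi)}(x)+1$, and even though the maximal saturated rank increases from $k$ to $k+1$, one gets
\[
n_{\varphi'}(x)=(\deg(x)+1)-1-(k+1)=\deg(x)-1-k=n_\varphi(x),
\]
so nothing is gained. (The alternative reading of your description, with the bump supported entirely in the interior of $e_x$ so that $F$ is locally constant at $x$, leaves all tangent directions at $x$ with last coordinate $0$ and changes nothing at all.)

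The paper avoids this by using \emph{two} of the ``excess'' edges at each bad vertex $z$, say $e(z)_0$ and $e(z)_1$, and giving $F$ slopes $-1$ and $+1$ along them respectively near $z$ (constant $0$ on all remaining edges). The slope-sum at $z$ then vanishes, so $z\notin\operatorname{supp}(\divisor F)$, no new ray is created, $\deg(z)$ is preserved, and the vectors $(v_1,1),(v_2,0),\dots,(v_{k+1},0)$ span a saturated lattice of rank $k+1$ (checked via the explicit isomorphism $\Z^{n+1}/L'\cong\Z^n/L$). Note that $n_\varphi(z)>0$ forces $\deg(z)\ge k+2$, so two such extra edges always exist. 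The actual support of $\divisor(F)$ then sits at interior points $p(z)_i,q(z)_i,r(z)_i$ of $e(z)_0$ and $e(z)_1$, and an auxiliary bump divisor $P$ on edges complementing a spanning tree is added to meet the break-divisor hypothesis of the lifting theorem --- essentially the bookkeeping you anticipated. Once you replace your one-edge local model by this balanced two-edge model, the rest of your plan goes through.
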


\begin{figure}
  \begin{tikzpicture}
    \draw  (0,0) -- (1,1) -- (3,1) -- (4,0) -- (5,0);
    \draw  (0,0) -- (-1,-1) -- (-3,-1) -- (-4,0) -- (-5,0);
    \draw[dashed] (-4,0) -- (4,0);

    \draw (-5,-1.3) -- (5,-1.3);
    \node[below] at (-2,-1.3) {$e_0$};
    \node[below] at (2,-1.3) {$e_k$};
    \node[below] at (0,-1.3) {$v$}; \fill (0,-1.3) circle (2pt);
    \node[below] at (1,-1.3) {$p(v)_{k-2}$}; \fill (1,-1.3) circle (2pt);
    \node[below] at (3,-1.3) {$q(v)_{k-2}$}; \fill (3,-1.3) circle (2pt);
    \node[below] at (4.4,-1.3) {$r(v)_{k-2}$}; \fill (4,-1.3) circle (2pt);
    \node[below] at (-1,-1.3) {$p(v)_{0}$}; \fill (-1,-1.3) circle (2pt);
    \node[below] at (-3,-1.3) {$q(v)_{0}$};  \fill (-3,-1.3) circle (2pt);
    \node[below] at (-4,-1.3) {$r(v)_{0}$};  \fill (-4,-1.3) circle (2pt);
  \end{tikzpicture}

  \caption{The graph of $F_{e_k}(v)$ along the edges $e_k(v)$ and $e_0(v)$.
  The function $F_{e_k}$ is constant $0$ on all other edges.}
  \label{figure resolution}
\end{figure}
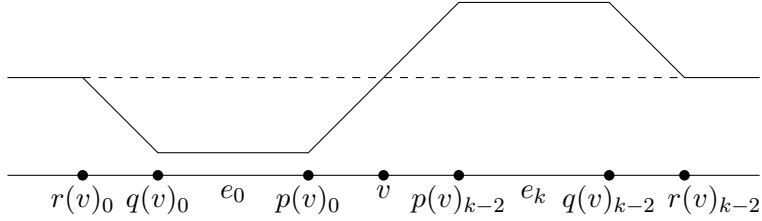

\begin{proof}
For each vertex $z$ in $\Sigma_{\varphi}$ such that $n_{\varphi'}(z) > 0$,
pick tangent vectors $e(z)_2,\dots,e(z)_{k+1}$ which span a saturated lattice as in\eqref{eq local degree of smoothness}.
Further, fix two other adjacent edges $e(z)_0$ and $e(z)_1$.
In both $e(z)_0$ and $e(z)_1$ we choose points $p(z)_i, q(z)_i, r(z)_i\in e(v)_i$
that are close to $z$, in the sense that they are closer to $z$ then to the other vertex of $e(z)_i$.
Further they should satisfy $\dist(p(z)_i, z) = \dist(q(z)_i, r(z)_i)$.

We now let
\begin{align*}
D_z &= -p(z)_{1} - q(z)_{1} + r(z)_{1} + p(z)_{0} + q(z)_0 - r(z)_0 \text{ and } \\
D &= \sum_{z \in \Sigma, n(z) > 1 } D_z.
\end{align*}
Let $\Gamma$ be the finite skeleton obtained from $\Sigma(\varphi)$ that is obtained
by removing the infinite edges.
Let $\Gamma'$ be a subdivision of $\Gamma$ such that all the $r(v), q(v), p(v)$ are vertices.
Now we pick edges $e_1,\dots,e_g$ of $\Gamma$ that form the complement of a spanning tree
and in each edge we pick points $s_1^j,s_2^j, s_3^j, s_4^j$ that occur on $e_j$ in this order and satisfy
$d_{e_j}(s_1^j, s_2^j) = d_{e_j}(s_3^j, s_4^j)$.
Denote by $P$ the divisor
\begin{align*}
P = \sum_{j=1}^g s_1^j - s_2^j - s_3^j + s_4^j
\end{align*}
on $\Gamma$.
Now by the lifting theorem (Theorem~\ref{lifting thm convenient}),
we find lifts of all points in the support of $D + P$
such that the divisors $D'$ and $P'$ satisfy that $D' + P'$ is principal
and $\tau_* P' = P$ and $\tau_* D' = D$.

Let $f$ be such that $\divisor(f) = D' + P'$.
We claim that $f$ has the required properties.
One checks easily that the tropicalization is again fully faithful.

Let $z$ be a vertex of $\Sigma(\varphi)$ and $v_1,\dots,v_{k+1}$
be as above.
Then the images of the tangent vectors at $z$ are now
\begin{equation} \label{vectors new lattice}
(v_1, 1) \; (v_2, 0) \dots (v_{k+1}, 0).
\end{equation}
The lattice $L'$ spanned by the vectors in \eqref{vectors new lattice} is of rank $k+1$.
We have $\Z^{n+1} / L' \cong \Z^n / L$, using the map
\begin{align*}
\Z^{n+1} \to \Z^n; (x_1,\dots,x_n) \mapsto (x_1 - v^1 x_{n+1}, \dots , x_n - v^n x_{n+1}),
\end{align*}
where $v_1 = (v^1,\dots,v^n)$.
In particular, $L'$ is saturated.
Since we do not add any edges at $z$, we have $n_{\varphi'}(z) = n_{\varphi}(z) -1$.

If $z$ is a vertex with $n_{\varphi}(z) = 1$, then $\log \vert f \vert$ is constant in a neighborhood of
$z$ and thus $n_{\varphi'}(z) = 1$.

If $z$ is one of the points in the support of $D$, then it is contained in an edge of $\Sigma$.
Denote by $w$ the vector in the direction of $e$ in $\Trop_{\varphi}(X)$.
Then $z$ is of degree $3$ in $\Sigma'$ and the set of direction vectors is either
\begin{align*}
\{(w, 1); (w, 0); (0, -1)\} \text{ or } \{(w, -1) ; (w, 0) ; (0, 1) \}.
\end{align*}
In particular, those span a saturated lattice of rank $2$ and $n_{\varphi'}(z) = 1$.
\end{proof}

\begin{corollary} \label{Cor resolution of singularities}
Let $n(\varphi) = \max_{z \in \Sigma} (n_{\varphi}(x))$.
Then there exist $n(\varphi)$ rational functions $f_1,\dots,f_{n(\varphi)}$ on $X$ such that if we denote by
\begin{align*}
\varphi' \colon X &\to Y \times (\Proj^1)^{n(\vf)}, \\
x &\mapsto ( \varphi(x), f_1(x), \dots, f_{n(\varphi)} )
\end{align*}
the associated embedding, $\Trop_\varphi'(X)$ is smooth.
\end{corollary}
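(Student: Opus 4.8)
The plan is to deduce the Corollary from Theorem~\ref{thm:resolution} by a straightforward induction on the quantity $n(\varphi) = \max_{z \in \Sigma(\varphi)} n_\varphi(z)$. The base case $n(\varphi) = 0$ is vacuous: there is nothing to add, since $\Trop_\varphi(X)$ is already smooth by the remark that $x$ is a smooth point precisely when $n_\varphi(x) = 0$. So assume $n(\varphi) = m \geq 1$.

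For the inductive step, I would apply Theorem~\ref{thm:resolution} to $\varphi$ to obtain a rational function $f$ and the re-embedding $\varphi_1 \colon X \to Y \times \Proj^1$, $x \mapsto (\varphi(x), f(x))$, which is again fully faithful. The crucial point is to check that $n(\varphi_1) \leq m - 1$, so that the maximum strictly drops. By the formula in Theorem~\ref{thm:resolution}, for every $z \in \Sigma(\varphi)$ we have $n_{\varphi_1}(z) = n_\varphi(z) - 1$ when $n_\varphi(z) > 0$ and $n_{\varphi_1}(z) = 0$ when $n_\varphi(z) = 0$; in either case $n_{\varphi_1}(z) \leq m - 1$. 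One must also argue that the \emph{new} vertices of $\Sigma(\varphi_1)$—those arising as the points in the support of $D$ together with the endpoints of the newly added infinite edges—do not spoil this bound. The points in the support of $D$ have local degree of non-smoothness equal to $1 \leq m - 1$ by the final paragraph of the proof of Theorem~\ref{thm:resolution} (using $m \geq 2$ in that case; when $m = 1$, $D$ is empty by construction, since $D = \sum_{z : n(z) > 1} D_z$). The type I endpoints of the new infinite leaf edges are $1$-valent, hence automatically smooth. Therefore $n(\varphi_1) \leq m - 1$.

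Iterating $m = n(\varphi)$ times produces rational functions $f_1, \dots, f_m$ and a tower of re-embeddings; setting $\varphi' = (\varphi, f_1, \dots, f_m) \colon X \to Y \times (\Proj^1)^m$ we obtain a fully faithful tropicalization with $n(\varphi') = 0$, i.e.\ $\Trop_{\varphi'}(X)$ is smooth. One should note that at each stage the ambient toric variety grows by one $\Proj^1$ factor and fully faithfulness is preserved by Theorem~\ref{thm:resolution}, so the composite map is a closed embedding (projectivity of $X$ and properness of the target) inducing a fully faithful, hence in particular totally faithful, tropicalization; smoothness then follows from the definition in Subsection~\ref{sec:ff-and-smooth}.

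I expect the only real subtlety—rather than a genuine obstacle—to be bookkeeping the behavior at the newly created vertices across iterations: one must be sure that a point which was in the support of $D$ at stage $i$ (hence $3$-valent with $n = 1$) is not later pushed up in valence by a subsequent application, which it is not, since in the next application we again only add tangent directions at vertices $z$ with $n(z) > 0$, and such a point already has $n = 1$, so at worst it receives one more edge and drops to $n = 0$ by the same lattice computation as in \eqref{vectors new lattice}. Thus the induction closes cleanly and the dimension count $Y \times (\Proj^1)^{n(\varphi)}$ is exactly what the iteration yields.
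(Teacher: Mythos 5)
Your proposal is correct and follows exactly the paper's route: the paper's entire proof is the one-line observation that one applies Theorem~\ref{thm:resolution} inductively until $n_{\varphi'}(z)=0$ for all $z$. Your additional bookkeeping (that the maximum strictly drops, that the newly created trivalent points in the support of $D$ and the $1$-valent infinite endpoints do not raise the maximum, and that full faithfulness is preserved at each stage) is exactly the content already carried by the statement and proof of Theorem~\ref{thm:resolution}, so you have simply made the paper's implicit induction explicit.
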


\begin{proof}
This follows by applying Theorem \ref{thm:resolution} inductively until $n_{\varphi'}(z) = 0$ for all $z$.
\end{proof}

\subsection{Application to our situation}

In this section, we prove the following theorem:
\begin{theorem} \label{theorem smooth embedding}
Let $X$ be a Mumford curve.
Let $C$ be the maximal degree of a vertex on the minimal skeleton $\Gamma$ of $X$.
Then there exists a map $X \to (\Proj^1)^{C+2}$
that induces a smooth tropicalization of $X$.
\end{theorem}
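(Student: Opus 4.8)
The plan is to obtain the map by first invoking Theorem~\ref{thm fully faithful} to get a fully faithful tropicalization $\varphi_0\colon X\to(\Proj^1)^3$, coming from the construction of Section~\ref{sec:construction} applied to a suitable subdivision of the \emph{minimal} skeleton $\Gamma$ of $X$, and then running the resolution procedure. By Corollary~\ref{Cor resolution of singularities} there are rational functions producing $\varphi'\colon X\to(\Proj^1)^3\times(\Proj^1)^{n(\varphi_0)}$ with $\Trop_{\varphi'}(X)$ smooth, where $n(\varphi_0)=\max_z n_{\varphi_0}(z)$. Since a nonzero constant function has no zeros or poles, composing $\varphi'$ with the closed embedding $(\Proj^1)^{3+n(\varphi_0)}\hookrightarrow(\Proj^1)^{C+2}$ that appends constant coordinates changes neither the extended skeleton nor full faithfulness nor smoothness: each appended coordinate contributes zero slope on every edge and a zero entry to every tangent vector, which preserves the rank and saturatedness of the lattices appearing in Definition~\ref{def:local-degree-of-non-smoothness}, and does not alter the (unit) stretching factors. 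Thus everything reduces to the numerical bound $n(\varphi_0)\le C-1$.

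To prove that bound I would run through the finite vertices $z$ of $\Sigma(\varphi_0)$ and estimate $n_{\varphi_0}(z)=\deg_{\Sigma}(z)-1-k(z)$, where $k(z)\ge 1$ denotes the maximal rank of a saturated sublattice of $\Z^3$ spanned by a subset of the edge directions at $\trop_{\varphi_0}(z)$; here $k(z)\ge 1$ is automatic since a single primitive nonzero vector spans a saturated rank-one lattice. The finite vertices come in three kinds. The points $c_e,a_e,p_e,q_e,b_e,d_e$ are $3$-valent in $\Sigma(\varphi_0)$, so $n_{\varphi_0}(z)\le 1$; the endpoint of the auxiliary leaf edge is $2$-valent, so $n_{\varphi_0}(z)\le 0$; and for a vertex $v$ of the graph model $G$ one has $\deg_{\Sigma}(v)=\deg_G(v)+1$, hence $n_{\varphi_0}(v)=\deg_G(v)-k(v)\le\deg_G(v)-1$. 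Now subdividing only introduces $2$-valent vertices and leaves all other degrees unchanged, and I am free to attach the leaf edge needed for Lemma~\ref{Sispossible} at a $2$-valent subdivision point rather than at a vertex of top degree; consequently $\deg_G(v)\le\max(C,3)$ for every $v$. When $C\ge 3$ this already gives $n_{\varphi_0}(v)\le C-1$, while the other two kinds give $n_{\varphi_0}(z)\le 1\le C-1$, so $n(\varphi_0)\le C-1$; the case $C=0$ (that is, $X\cong\Proj^1$) is trivial.

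The one borderline case, which I expect to be the real content of the proof, is $C=2$, i.e.\ when $\Gamma$ is a single cycle: then the unique $3$-valent vertex $v_0$ (the one carrying the leaf edge) only satisfies $n_{\varphi_0}(v_0)\le 2$ a priori, whereas we need $1$. Here I would use the explicit shape of the functions: near any vertex $v$ of $G$ the function $F_3$ is constant — this is forced by the identity $\dist_e(a_e,b_e)=\lvert r(w(e))-r(v(e))\rvert$ of Section~\ref{sec:params} — so every tangent direction at $\trop_{\varphi_0}(v)$ lies in $\Z^2\times\{0\}$, and these directions are precisely the vectors $(1,s(e),0)$ for $e\ni v$ together with the vertex-ray vector $(-\deg_G(v),-\sum_{e\ni v}s(e),0)$. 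Any two of them span a saturated rank-two sublattice of $\Z^3$ exactly when the integers $\{s(e):e\ni v\}$ are not all congruent modulo a common prime (their pairwise differences generate the relevant determinant ideal), and this can be arranged at $v_0$ — for instance by taking two of its three $s$-values consecutive — compatibly with conditions~\ref{S:distinct}--\ref{S:coprime}, by the same finite-avoidance argument as in Lemma~\ref{Sispossible}. This yields $k(v_0)=2$, hence $n_{\varphi_0}(v_0)=1=C-1$, completing the estimate. The main obstacle is therefore not conceptual but the careful bookkeeping of the vertex degrees and tangent directions of $\Sigma(\varphi_0)$, with the genus-one case as the sole situation where the generic construction has to be nudged.
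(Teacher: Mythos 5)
Your proposal follows the same route as the paper: reduce everything to the bound $n(\varphi_0)\le C-1$ for the ambient-dimension-three fully faithful tropicalization by inspecting the tangent directions $(1,s(e),0)$ and the vertex-ray direction at each vertex of $\Sigma(\varphi_0)$, and then apply Corollary~\ref{Cor resolution of singularities}. You are in fact more careful than the paper's own proof about the vertices created by subdivision and by the auxiliary leaf edge of Lemma~\ref{Sispossible}, and about the borderline genus-one case $C=2$; this extra bookkeeping is welcome but does not change the argument.
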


\begin{note}
 This is $3$ more than the optimal bound of $C-1$ that is determined
by the definition of smoothness in terms of spans of direction vectors (c.f. \S\ref{sec:ff-and-smooth}).
\end{note}

\begin{proof}
Let $X \to (\Proj^1)^{3}$ be a map that induces by fully faithful tropicalization,
as in Theorem~\ref{thm fully faithful}.
Note that the maximum degree of a vertex in $\Sigma(\varphi)$ is $C+1$, as
we add one infinite edge at every vertex.
Let $z$ be a vertex of $\Gamma$ and $e_1,\dots,e_k$ the adjacent edges.
Let $e_0$ be the adjacent infinite edge in $\Sigma$.
The tangent vectors in the tropicalization we constructed are of the form
\[
(1, s_{e_1}, 0), \dots , (1, s_{e_k}, 0), (k, - \sum s_{e_i}, 0).
\]
Unfortunately, no two of these span a saturated lattice of rank $2$.
We conclude that $n_{\varphi}(z) = \deg_{\Sigma}(x) - 2 = \deg_{\Gamma}(z) -1$.

Since all other $z \in \Sigma(\varphi)$ are at most trivalent, we conclude that
$n(\varphi) = C - 1$.

The result now follows from Corollary~\ref{Cor resolution of singularities}
and the fact that $C -1 + 3 = C + 2$.
\end{proof}

\begin{corollary}\label{cor:smooth-tropicalization}
Let $X$ be a Mumford curve of genus $g$.
Then there exists a map $X \to (\Proj^1)^{2g+2}$
that induces a smooth tropicalization of $X$.
\end{corollary}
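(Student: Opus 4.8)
The plan is to derive this as a corollary of Theorem~\ref{theorem smooth embedding}, which already produces a smooth tropicalization of $X$ into $(\Proj^1)^{C+2}$, where $C$ is the maximal vertex degree of the minimal skeleton $\Gamma$ of $X$. So the entire content reduces to the estimate $C \le 2g$, together with a cosmetic step that enlarges the ambient space from $(\Proj^1)^{C+2}$ to $(\Proj^1)^{2g+2}$.

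First I would bound $C$. The case $g = 0$ is trivial: then $X \cong \Proj^1$, and for instance the diagonal embedding $x \mapsto (x,x)$ into $(\Proj^1)^2 = (\Proj^1)^{2g+2}$ already induces a smooth tropicalization (the completed extended skeleton is a segment with two infinite ends and no finite vertices, mapping isometrically onto the tropical diagonal). For $g = 1$ the minimal skeleton is a circle, every point of which has valence $2 = 2g$, so $C = 2g$ and Theorem~\ref{theorem smooth embedding} applies directly. For $g \ge 2$, the minimal skeleton $\Gamma$ has no point of valence $1$ — such a leaf would force a puncture, but $X$ is projective and unmarked — and no point of valence $2$ other than interior edge points; I take the canonical graph model $(V,E)$ whose vertices are precisely the points of valence $\ge 3$. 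From $\lvert E\rvert - \lvert V\rvert = g - 1$ and $\sum_{v\in V}\deg(v) = 2\lvert E\rvert$ one gets $\sum_{v\in V}(\deg(v)-2) = 2g-2$, and since every summand is $\ge 1$, each one is $\le 2g-2$; hence $\deg(v) \le 2g$ for all $v$, i.e.\ $C \le 2g$.

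It remains to pass from $(\Proj^1)^{C+2}$ to $(\Proj^1)^{2g+2}$. Let $\varphi\colon X \to (\Proj^1)^{C+2}$ be the closed embedding of Theorem~\ref{theorem smooth embedding}, and let $\iota\colon (\Proj^1)^{C+2}\hookrightarrow (\Proj^1)^{2g+2}$ be the closed immersion that appends $2g-C$ constant coordinates, each equal to a fixed element of $K^\times \subseteq \G_m(K)$. Then $\varphi' \coloneqq \iota\circ\varphi$ is a closed embedding whose last $2g-C$ coordinate functions are nonzero constants; in particular the image still meets the dense torus, the appended coordinates contribute no zeros or poles, and so both $X^\circ$ and the completed extended skeleton $\Sigma(\varphi') = \Sigma(\varphi)$ are unchanged. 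The map $\trop_{\varphi'}$ is $\trop_{\varphi}$ followed by inclusion of an affine slice, hence still injective on $\Sigma(\varphi')$ with all stretching factors equal to $1$, i.e.\ fully faithful; and at each finite vertex the primitive tangent vectors of $\Trop_{\varphi'}(X)$ are those of $\Trop_{\varphi}(X)$ with zeros appended, which changes neither the rank nor the saturation of the lattice they span. Thus $\trop_{\varphi'}$ is again smooth. The only step with any real content is the inequality $C \le 2g$, and that is a one-line Euler-characteristic count, so I do not anticipate any genuine obstacle.
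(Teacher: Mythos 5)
Your proposal is correct and takes essentially the same route as the paper: apply Theorem~\ref{theorem smooth embedding} together with the bound $C \le 2g$ on vertex degrees in a genus-$g$ graph. You simply make explicit two points the paper leaves implicit --- the Euler-characteristic count $\sum_v(\deg(v)-2)=2g-2$ proving $C\le 2g$, and the padding by constant nonzero coordinates to land exactly in $(\Proj^1)^{2g+2}$ when $C<2g$ --- both of which are handled correctly.
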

\begin{proof}
The minimal skeleton of a genus $g$ Mumford curve has first Betti number $g$.
Any vertex in a graph with genus $g$ has degree at most $2g$.
Thus the Corollary follows from Theorem \ref{theorem smooth embedding}.
\end{proof}

\section{A genus 2 curve} \label{sec:genus2}
A construction for tropicalizing certain genus 2 Mumford curves has been given by Wagner \cite{Wag}.
For skeleta consisting of two loops joined at a common point,
his construction is pictured in Figure~\ref{fig:wagner-construction}.
In ambient dimension $2$, there is an intersection point.
Wagner fixes this by adding in a third rational function to resolve the crossing in ambient dimension $3$.

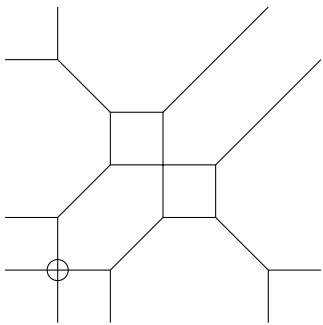
\begin{figure}[htbp]
  \centering
  \begin{tikzpicture}[scale=0.7]
    \draw (0,0) -- (0,1) -- (-1,1) -- (-1,0) -- (1,0) -- (1,-1) -- (0,-1) -- (0,0);
    \draw (0,1) -- (2,3);
    \draw (1,0) -- (3,2);
    \draw (-1,0) -- (-2,-1);
      \draw (-2,-1) -- (-3,-1);
      \draw (-2,-1) -- (-2,-3);
    \draw (0,-1) -- (-1,-2);
      \draw (-1,-2) -- (-3,-2);
      \draw (-1,-2) -- (-1,-3);
    \draw (-1,1) -- (-2,2);
      \draw (-2,2) -- (-3,2);
      \draw (-2,2) -- (-2,3);
    \draw (1,-1) -- (2,-2);
      \draw (2,-2) -- (3,-2);
      \draw (2,-2) -- (2,-3);
    \draw (-2,-2) circle (0.2);
  \end{tikzpicture}
  \caption{First step of Wagner's construction of a tropicalization of a genus two curve with an intersection circled.}
  \label{fig:wagner-construction}
\end{figure}

Wagner's construction does not consider the singularity at the four-valent vertex and
further analysis is required to show this point can be made smooth.

In this section, we show how to approach such tropicalization questions combinatorially from a rough-draft picture and
how resolving this four-valent point comes ``for free'' with our approach.

\subsection{Picturing the construction}
Picturing how the skeleton should be embedded in $\mb{TP}^{3}$ tells us how to construct the divisors.
 The first picture we visualize is just two hexagons attached at a common vertex and
contained in the planes $z = 0$ and $x = y$ respectively.
Second, we figure out how all the infinite rays should go so that the rays have directions
$(-1,0,0)$ or $(0,-1,0)$ or $(0,0,-1)$ or $(1,1,1)$
so that we can guarantee that they do not intersect in the boundary strata of $\mb{TP}^3$.
This gives us the picture of Figure~\ref{fig:genus2roughDraft}.

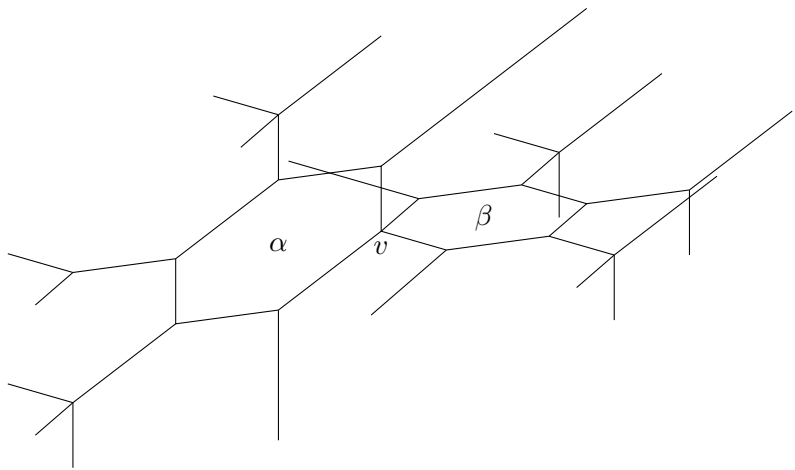
\begin{figure}[htbp]
  \centering
  \tdplotsetmaincoords{60}{30}
  \begin{tikzpicture}[tdplot_main_coords]
    \draw (0,0,0) -- (0,0,1) -- (-1,-1,1) -- (-2,-2,0) -- (-2,-2,-1) -- (-1,-1,-1) -- (0,0,0);
    \draw (0,0,0) -- (1,0,0) -- (2,1,0) -- (2,2,0) -- (1,2,0) -- (0,1,0) -- (0,0,0);
    \draw (0,0,1) -- (2,2,3);
    \draw (-1,-1,1) -- (-1,-1,2);
      \draw (-1,-1,2) -- (0,0,3);
      \draw (-1,-1,2) -- (-2,-1,2);
      \draw (-1,-1,2) -- (-1,-2,2);
    \draw (-2,-2,0) -- (-3,-3,0);
      \draw (-3,-3,0) -- (-3,-4,0);
      \draw (-3,-3,0) -- (-4,-3,0);
    \draw (-2,-2,-1) -- (-3,-3,-2);
      \draw (-3,-3,-2) -- (-3,-3,-3);
      \draw (-3,-3,-2) -- (-3,-4,-2);
      \draw (-3,-3,-2) -- (-4,-3,-2);
    \draw (-1,-1,-1) -- (-1,-1,-3);
    \draw (1,0,0) -- (1,-2,0);
    \draw (2,1,0) -- (3,1,0);
      \draw (3,1,0) -- (4,2,1);
      \draw (3,1,0) -- (3,1,-1);
      \draw (3,1,0) -- (3,0,0);
    \draw (2,2,0) -- (3,3,0);
      \draw (3,3,0) -- (4,4,1);
      \draw (3,3,0) -- (3,3,-1);
    \draw (1,2,0) -- (1,3,0);
      \draw (1,3,0) -- (2,4,1);
      \draw (1,3,0) -- (1,3,-1);
      \draw (1,3,0) -- (0,3,0);
    \draw (0,1,0) -- (-2,1,0);
    \draw (0,0,0) node [below] {$v$};
    \draw (-1,-1,0) node {$\alpha$};
    \draw (1,1,0) node {$\beta$};
  \end{tikzpicture}
  \caption{First draft of how the genus 2 skeleton is embedded in $\mb{TP}^3$.}
  \label{fig:genus2roughDraft}
\end{figure}

Let $\Xan$ be the analytification of a curve whose skeleton consists of two loops,
$\alpha$ and $\beta$, connected at a common point, $\omega$.

In order to form the hexagons, we need to choose $5$ points spaced equidistant around each loop of the skeleton.
To that end, let  $\alpha_1,\dots,\alpha_5$ be points spaced equidistant around
$\alpha$ and $\beta_1,\dots,\beta_5$ equidistant around $\beta$. See Figure~\ref{fig:genus2skeleton}.

\begin{figure}[tbp]
  \centering
  \begin{tikzpicture}[scale=0.7]
    \draw (2.5,0) circle (2.5);
    \draw (-2,0) circle (2);
    \filldraw (0,0) circle (0.5mm);

    \draw [right] (0,0) node {$\omega$};

    \foreach \x in {1,...,5} {
      \filldraw[black] (-2,0) + ({60*\x}:2) circle(0.5mm);
      \filldraw[black] (2.5,0) + ({180 - 60*\x}:2.5) circle(0.5mm);
      \draw (-2,0) + ({60*\x}:1.6) node {$\alpha_{\x}$};
      \draw[black] (2.5,0) + ({180 - 60*\x}:2.1) node {$\beta_{\x}$};
    }

    \foreach \x in {1,...,4} {
      \filldraw[black] (-2,0) + ({12*\x}:2) circle (0.5mm);
      \filldraw[black] (2.5,0) + ({120 - 12*\x}:2.5) circle (0.5mm);
      \draw (-2,0) + ({12*\x}:2.4) node {$\gamma_{\x}$};
      \draw (2.5,0) + ({120 - 12*\x}:2.9) node {$\delta_{\x}$};
    }
  \end{tikzpicture}
  \caption{Skeleton $\Gamma$ of $X$.}
  \label{fig:genus2skeleton}
\end{figure}
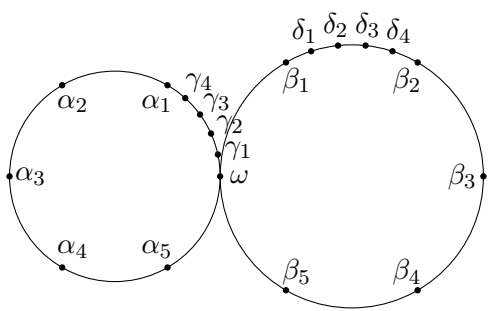

We will arrange so that $\alpha$ is the hexagon in the $x = y$ plane and $\beta$ is in the $z = 0$ plane.

For the first divisor, we note that the $x$-coordinate stays constant between $\omega$ and $\alpha_1$,
then decreases linearly, with slope $1$, from $\alpha_1$ to $\alpha_3$ and so on.
Writing down where the slope changes gives us the divisor
\[ \alpha_1 - \alpha_3 - \alpha_4 + \beta_2 + \beta_3 - \beta_5. \]

Doing the same for the $y$-coordinate, gives us the divisor
\[ \alpha_1 - \alpha_3 - \alpha_4 - \beta_1 + \beta_3 + \beta_4. \]
The $\alpha$-hexagon is contained in the $x = y$ plane, so it makes sense that the first three
terms of each divisor are identical.
However, this presents a problem because we need the lifting theorem to choose lifts for us on a break-divisor and
we don't have any points we can allow the lifting theorem to choose for us on the $\alpha$-cycle.

We also need to consider the infinite rays.
For example, at $\alpha_2$ we have a ray going straight up (direction: $(0,1,0)$) and
then branching in the directions $(-1,0,0)$, $(0,-1,0)$ and $(1,1,1)$.
Thus we have two rays that have a non-zero $x$-coordinate and two rays that have a non-zero $y$-coordinate.
Therefore, we need to lift $\alpha_2$ to $x_{2,0} - x_{2,1}$ and $x_{2,0} - x_{2,2}$ for the $x$ and $y$ coordinates respectively.

\subsection{A proper construction}
In order to construct this embedding properly,
we first need to choose 4 points $\gamma_1,\dots,\gamma_4$ spaced equidistant
between two previously marked points, let's say $\omega$ and $\alpha_1$ and
another four points $\delta_1,\dots,\delta_4$ spaced equidistant between $\beta_1$ and $\beta_2$.
These points provide for us break-divisors which we can feed into Theorem~\ref{lifting thm convenient}.
These points are also pictured in Figure~\ref{fig:genus2skeleton}.

As in Section~\ref{sec:construction},
we apply Theorem~\ref{lifting thm convenient} to the data of Table~\ref{tab:listofdivisors}
where the break divisors are the sum of the circled quantities.
This yields three piecewise-linear function $F_1, F_2, F_3$ from the extended skeleton to $\TT \mb P^1$.

\begin{table}[htbp]
    \centering
    \hspace*{\fill}
    $\begin{array}[t]{c|c}
        \tau_* D_1 & D_1  \\ \hline
        + \alpha_1 & \circled{x_1} \\
                   & x_{2,0} - x_{2,1} \\
        - \alpha_3 & - x_{3,1} \\
        - \alpha_4 & - x_{4,1} \\ \hline
        +  \beta_2 & \circled{y_{2,0}} \\
        +  \beta_3 & y_{3,0} \\
                   & y_{4,0} - y_{4,1} \\
         - \beta_5 & - y_5 \\ \hline
                   & u_{2,0} - u_{2,1} \\
                   & u_{3,0} - u_{3,1} \\ \hline
                   & v_{2,0} - v_{2,1} \\
                   & v_{3,0} - v_{3,1}
    \end{array}$
    \hspace*{\fill}
    \vrule
    \hspace*{\fill}
    $\begin{array}[t]{c|c}
        \tau_* D_2 & D_2  \\ \hline
        + \alpha_1 & x_1 \\
                   & x_{2,0} - x_{2,2} \\
        - \alpha_3 & - x_{3,2} \\
        - \alpha_4 & - x_{4,2} \\ \hline
         - \beta_1 & - \circled{y_1} \\
                   & y_{2,0} - y_{2,2} \\
         + \beta_3 & y_{3,0} \\
         + \beta_4 & y_{4,0} \\ \hline
        - \gamma_1 & - \circled{u_1} \\
        + \gamma_2 &   u_{2,0} \\
        + \gamma_3 &   u_{3,0} \\
        - \gamma_4 & - u_4 \\ \hline
                   & v_{2,0} - v_{2,2} \\
                   & v_{3,0} - v_{3,2}
    \end{array}$
    \hspace*{\fill}
    \vrule
    \hspace*{\fill}
    $\begin{array}[t]{c|c}
        \tau_* D_3 & D_3  \\ \hline
        + \alpha_1 & x_1 \\
        + \alpha_2 & x_{2,0} \\
        - \alpha_4 & - x_{4,3} \\
        - \alpha_5 & -\circled{x_5} \\ \hline
                   & y_{2,0} - y_{2,3} \\
                   & y_{3,0} - y_{3,3} \\
                   & y_{4,0} - y_{4,3} \\ \hline
                   & u_{2,0} - u_{2,3} \\
                   & u_{3,0} - u_{3,3} \\ \hline
        - \delta_1 & - \circled{v_1}   \\
        + \delta_2 & v_{2,0} \\
        + \delta_3 & v_{3,0} \\
        - \delta_4 & - v_4
    \end{array}$
    \hspace*{\fill} \vspace{1 em}
    \caption{Divisors on $\Gamma$ and on $\Xan$. Lifts are chosen first for $D_1$, then $D_2$, then $D_3$.}
    \vspace{-1 em}
    \label{tab:listofdivisors}
\end{table}

Here the notation for the lifts is as follows:
\begin{itemize}
\item $x$'s correspond to $\alpha$'s, $y$'s to $\beta$'s, $u$'s to $\gamma$'s and $v$'s to $\delta$'s
\item lifts with a single subscript are the unique lift of that point in $\Xan$ (and this lift is consistent for each divisor)
\item for a lift with two subscripts, e.g.\ $x_{i,j}$,
the first subscript represents the index of the corresponding point of $\Gamma$ (so $x_{i,j}$ is a lift of $\alpha_i$).
The second subscript corresponds to which divisor the lift is for (e.g.\ $x_{i,j}$ is a lift for $D_j$).
If the second subscript is $0$, the lift appears in all three of $D_1,D_2,D_3$ (and again, the lift is consistent).
\end{itemize}

We choose multiple lifts of the same point of $\Gamma$ in order to ensure
the resulting tropicalization is ``injective at infinity'' i.e.\ we have an embedding in $\mb{TP}^3$.
This is achieved by choosing the lifts in such a way that all the infinite rays have directions
$(-1,0,0), (0,-1,0), (0,0,-1)$ or $(1,1,1)$.

Having done this, we need to ensure smoothness,
and this requires us to choose the lifts over a point $p$ to share a common initial segment of length
$\ell(p)$ as in Figure~\ref{fig:genus2commonedge}.
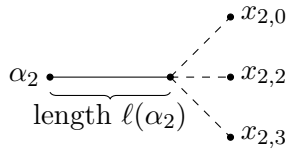
\begin{figure}[htbp]
  \centering
  \begin{tikzpicture}[scale=0.8]
      \foreach \x/\y in {0/0, 2/0, 3/1, 3/0, 3/-1}
          \filldraw (\x,\y) circle (0.5 mm);
      \draw [left] (0,0) node {$\alpha_2$};
      \draw (0,0) -- (2,0);
      \draw [decoration={brace, raise=2 mm, mirror}, decorate] (0,0) -- node [below=2 mm] {length $\ell(\alpha_2)$} (2,0);
      \foreach[count=\i, evaluate=\i as \y using int(2 - \i)] \j in {0,2,3} {
          \draw [right] (3, \y) node {$x_{2,\j}$};
          \draw [dashed] (2,0) -- (3,\y); }
  \end{tikzpicture}
  \caption{$\alpha_2$ and its lifts (dashed lines are infinite).}
  \label{fig:genus2commonedge}
\end{figure}

\begin{proposition}
The data in Table~\ref{tab:listofdivisors} allows us, via Theorem~\ref{lifting thm convenient},
to find rational functions $f_1,f_2,f_3$ on $\Xan$ whose divisors are $D_1, D_2,D_3$ and
such that $\divisor(\log |f_i|) = \tau_* D_i$ for all $i$. As before, we let $F = (F_1,F_2,F_3)$.

  For convenience, we will assume that $F(v) = (0,0,0)$. \hfill $\qed$
\end{proposition}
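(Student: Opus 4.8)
The plan is to invoke Theorem~\ref{lifting thm convenient} three times, once for each block of Table~\ref{tab:listofdivisors}, in the order $D_1$, $D_2$, $D_3$, reusing at each step the lifts already chosen or produced. Before doing so I would pass to a graph model of $\Gamma$ refining the one of Figure~\ref{fig:genus2skeleton} in which $\omega$ and all of the $\alpha_i,\beta_i,\gamma_i,\delta_i$ are vertices (and hence, away from $\omega$, two-valent), and I would place the marked points at $\Lambda$-rational positions respecting the required equidistances: the $\gamma_i$ equally spaced on the arc $[\omega,\alpha_1]$ of the loop $\alpha$, the $\delta_i$ equally spaced on the arc $[\beta_1,\beta_2]$ of the loop $\beta$, and the $\alpha_i$ (resp.\ $\beta_i$) equally spaced around $\alpha$ (resp.\ $\beta$). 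This is possible since the $\Lambda$-rational points are dense.

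For each block I would check the two hypotheses of Theorem~\ref{lifting thm convenient}. First, the divisor on $\Gamma$ in the left column is principal: $\tau_*D_1$ is the divisor of the piecewise-linear $x$-coordinate function of the rough draft (Figure~\ref{fig:genus2roughDraft}), namely $\alpha_1-\alpha_3-\alpha_4+\beta_2+\beta_3-\beta_5$, while $\tau_*D_2$ and $\tau_*D_3$ are the divisors of the $y$- and $z$-coordinate functions corrected by the ``bump'' divisors $-\gamma_1+\gamma_2+\gamma_3-\gamma_4$ and $-\delta_1+\delta_2+\delta_3-\delta_4$; each bump divisor is principal precisely because of the chosen equidistances, being the divisor of a piecewise-linear function supported on $[\omega,\alpha_1]$, resp.\ $[\beta_1,\beta_2]$. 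Second, the circled part of each block is a break divisor supported on two-valent points: in each case it consists of exactly $g=2$ points, one lying on an edge of the loop $\alpha$ and one on an edge of the loop $\beta$ --- $\alpha_1$ and $\beta_2$ for $D_1$; $\gamma_1$ (which lies on $[\omega,\alpha_1]\subseteq\alpha$) and $\beta_1$ for $D_2$; $\alpha_5$ and $\delta_1$ (which lies on $[\beta_1,\beta_2]\subseteq\beta$) for $D_3$ --- so that deleting these two edges yields the complement of a spanning tree. For $D_1$ the circled points occur with positive sign, so the theorem applies directly; for $D_2$ and $D_3$ they occur negatively, so I would apply the theorem to $-D_2$ and $-D_3$, whose $\Gamma$-divisors are again principal. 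The lifts of all non-circled points are prescribed data: I would pick an arbitrary $\Lambda$-rational lift at the first occurrence of each symbol and reuse exactly that point in the later blocks, so that the lifts $x_1,y_{2,0}$ produced in the first application become prescribed inputs for the second and third. This reuse is legitimate because the theorem permits all non-break preimages to be chosen freely, and because each block's circled points do not occur in any earlier block, hence their lifts are genuinely still free when that block is treated.

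The three applications produce, for each $i$, a principal divisor on $X$ which, with the lifts filled in, equals $D_i$; let $f_i\in K(X)^\times$ satisfy $\divisor(f_i)=D_i$, which pins $f_i$ down up to a scalar in $K^\times$. The slope formula recalled in Section~\ref{sec:skel-div} then gives $\divisor(\log|f_i|)=\tau_*\divisor(f_i)=\tau_*D_i$, where $\log|f_i|$ denotes the restriction to $\Gamma$, so the $F_i$ are the asserted tropical meromorphic functions. Finally, $\omega$ is a type~II point and not a zero or pole of $f_i$, so $\log|f_i(\omega)|\in\Lambda$; replacing each $f_i$ by a suitable constant multiple (which leaves $\divisor(f_i)$ unchanged) achieves $F(\omega)=(0,0,0)$. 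The remaining freedom in the prescribed lifts is exactly what allows one to also arrange the directions of the infinite rays and the common initial segments of Figure~\ref{fig:genus2commonedge} discussed above, but those refinements play no role in the present statement.

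The one step that I expect to require genuine attention --- everything else being bookkeeping --- is the break-divisor verification in the second point above, and this is precisely what forces the shape of the construction: with the naive coordinate divisors, the lifts of all points of the $y$-divisor lying on the loop $\alpha$ would already be determined after lifting $D_1$, leaving nothing on that loop for the lifting theorem to choose when one turns to $D_2$; the bump divisors on $[\omega,\alpha_1]$ and $[\beta_1,\beta_2]$ are inserted exactly to restore the needed break-divisor freedom, on the loop $\alpha$ for $D_2$ and on the loop $\beta$ for $D_3$, without altering the coordinate functions away from those short arcs.
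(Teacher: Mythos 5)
Your proposal is correct and follows essentially the same route as the paper, which leaves this proposition without a written proof precisely because it is the three-fold application of Theorem~\ref{lifting thm convenient} to the blocks of Table~\ref{tab:listofdivisors} (in the order $D_1$, $D_2$, $D_3$, with the circled points as break divisors and earlier lifts reused as prescribed data) that the surrounding text describes. Your added checks --- principality of the $\tau_*D_i$ via the coordinate functions of the rough draft plus the ``bump'' divisors, the break-divisor property of the circled points (one per loop, still unconstrained at the relevant stage), negating $D_2$ and $D_3$ so the break points sit in the positive part, and normalizing by a scalar to get $F(v)=0$ --- are all accurate elaborations of that argument.
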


\subsection{Injective, smooth and fully-faithful} \label{sec:genus2isff}
The goal of this section is to explain why this construction is smooth and fully-faithful and
how to choose the appropriate parameters to make the construction injective.

First, we will explain how the picture we started with (Figure~\ref{fig:genus2roughDraft}) does
not have any crossings.
Then we will explain how to choose the data corresponding to
$\gamma_1,\dots,\gamma_4,\delta_1,\dots,\delta_4$ to get an injective lift.

\begin{figure}[tbp]
  \centering
  \tdplotsetmaincoords{90}{45}
  \begin{tikzpicture}[tdplot_main_coords, scale=0.6]
    \draw (0,0,0) -- (0,0,1) -- (-1,-1,1) -- (-2,-2,0) -- (-2,-2,-1) -- (-1,-1,-1) -- (0,0,0);
    \draw [dashed] (0,0,0) -- (2,2,0);
    \draw (0,0,1) -- (2,2,3);
    \draw (-1,-1,1) -- (-1,-1,2);
      \draw (-1,-1,2) -- (0,0,3);
    \draw (-2,-2,0) -- (-3,-3,0);
    \draw (-2,-2,-1) -- (-3,-3,-2);
      \draw (-3,-3,-2) -- (-3,-3,-3);
    \draw (-1,-1,-1) -- (-1,-1,-3);
    \draw (2,2,0) -- (3,3,0);
      \draw (3,3,0) -- (4,4,1);
      \draw (3,3,0) -- (3,3,-1);
    \draw (0,0,0) node [below] {$v$};
    \draw (-1,-1,0) node {$\alpha$};
  \end{tikzpicture} \vrule
  \tdplotsetmaincoords{0}{0}
  \begin{tikzpicture}[tdplot_main_coords, scale=0.6]
    \draw [dashed] (0,0,0) -- (-2,-2,0);
    \draw (0,0,0) -- (1,0,0) -- (2,1,0) -- (2,2,0) -- (1,2,0) -- (0,1,0) -- (0,0,0);
    \draw (-2,-2,0) -- (-3,-3,0);
      \draw (-3,-3,0) -- (-3,-4,0);
      \draw (-3,-3,0) -- (-4,-3,0);
    \draw (1,0,0) -- (1,-2,0);
    \draw (2,1,0) -- (3,1,0);
      \draw (3,1,0) -- (3,0,0);
    \draw (2,2,0) -- (3,3,0);
    \draw (1,2,0) -- (1,3,0);
      \draw (1,3,0) -- (0,3,0);
    \draw (0,1,0) -- (-2,1,0);
    \draw (0,0,0) node [below] {$v$};
    \draw (1,1,0) node {$\beta$};
  \end{tikzpicture}
  \caption{The $x = y$ and $z = 0$ planes in our construction.
Dashed lines represent where the other hexagons are (outside the planes).}
  \label{fig:genus2planes}
\end{figure}
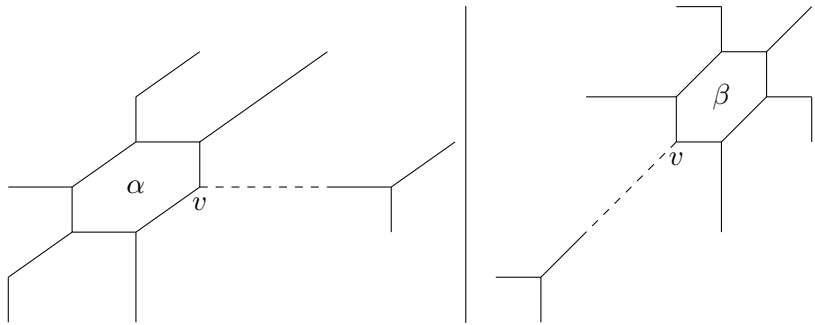

The following Proposition is included for completeness,
to show that we have a crossing-free tropical variety in Figure~\ref{fig:genus2roughDraft}.
If the reader is sufficiently convinced by the image in Figure~\ref{fig:genus2roughDraft},
they may prefer to continue reading the proof in Proposition~\ref{prop:genus2bumps}.

\begin{proposition}
  The rough draft in Figure~\ref{fig:genus2roughDraft} does not contain any crossings.
\end{proposition}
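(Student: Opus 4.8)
The plan is to verify that no two of the finitely many line segments and infinite rays drawn in Figure~\ref{fig:genus2roughDraft} meet except at a common endpoint, organising the cases by the coordinate subspaces the pieces occupy. A useful preliminary remark is that the four admissible ray directions $-e_1,-e_2,-e_3$ and $e_1+e_2+e_3$ are exactly the negatives of the rays of the fan of $\Proj^3$, so the corresponding rays limit to points in the relative interiors of the four distinct facets $\{a_i=-\infty\}$, $i=0,1,2,3$, of the simplex $\mb{TP}^3$, in homogeneous coordinates $[f_1:f_2:f_3:1]$. Hence rays with distinct admissible directions cannot meet near the boundary of $\mb{TP}^3$, and two rays with the same direction have the same boundary limit only if their basepoints agree modulo that direction, which one reads off the figure never to happen; this reduces the task to ruling out crossings inside the open stratum $\R^3$.

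I would first treat the two hexagons. The hexagon coming from $\alpha$ lies in the plane $\{x=y\}$, with $x=y\le 0$ and $|z|\le 1$ along it, while the hexagon coming from $\beta$ lies in the plane $\{z=0\}$, with $x+y\ge 0$ along it (compare Figure~\ref{fig:genus2planes}). Consequently $\alpha\subseteq\{x+y\le 0\}$ and $\beta\subseteq\{x+y\ge 0\}$, so any common point lies in $\{x+y=0\}$, and that set meets each hexagon only in the vertex $v$. These same half-space descriptions, together with the bound $|z|\le 1$ on $\alpha$, are what I would reuse throughout.

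For the remaining pairs---a segment or ray against a hexagon, and a segment or ray against another segment or ray---I would proceed case by case with two kinds of separating data. When the two pieces lie in a common coordinate plane (the principal instances being pieces in the plane $\{x=y\}$ of $\alpha$ or in the plane $\{z=0\}$ of $\beta$), I would project to that plane and read off the resulting planar picture. When the two pieces are transverse, a single coordinate, or the functional $x+y$, separates them: it is constant along one piece at a value the other avoids, or strictly monotone along one piece and out of range just past the shared endpoint. The subtle case is the family of rays in direction $e_1+e_2+e_3$, the only pieces along which $x+y\to+\infty$ on the $\alpha$-side; here the point is that each such ray attached to $\alpha$ is based at a vertex with $z\ge 1$ and has $z$ strictly increasing, so it is confined to $\{z\ge 1\}$ and thereby separated both from $\beta$ and from all segments and $(-e_3)$-rays of the $\beta$-side (which lie in $\{z\le 0\}$), while the finitely many lines carrying $(e_1+e_2+e_3)$-rays are pairwise distinct and so those rays do not cross one another.

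Running through the remaining pairs in this way, each a one-line comparison of coordinate ranges or a glance at a planar projection, finishes the argument. The obstacle is organisational rather than conceptual: there are a moderate number of pairs to check, and one must be careful precisely with the $(e_1+e_2+e_3)$-rays---which is exactly the interaction the placement of basepoints in Figure~\ref{fig:genus2roughDraft} is arranged to avoid, and which in the honest construction (Proposition~\ref{prop:genus2bumps}) forces the extra lifts over the points $\gamma_i$ and $\delta_i$.
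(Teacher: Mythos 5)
Your plan is correct and follows essentially the same route as the paper: separating the two hexagons by the plane $x+y=0$ and then disposing of the remaining segment/ray pairs case by case using the planes $x=y$, $z=0$, $x+y=0$, coordinate-range comparisons, and planar projections, with the $(1,1,1)$-rays singled out as the delicate family. One pedantic point: $\{x+y=0\}$ meets the $\alpha$-hexagon in the whole edge from $(0,0,0)$ to $(0,0,1)$, not just in $v$, but since it meets $\beta$ only in $v$ the conclusion stands.
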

\begin{proof}
  To start: the two hexagons do not cross each other because they are separated by the plane $x + y = 0$.

  Second, the rays starting at the hexagons do not cross the hexagons.
These rays can all be separated by a plane that contains one of the edges of the hexagon at the vertex
where the ray originates.

  Also, the rays starting at the hexagons do not intersect other such rays.
We can see this in Figure~\ref{fig:genus2planes} or by writing down the rays.

  For example, the rays of the $\beta$ hexagon have $z = 0$ and
do not have a chance of intersecting most of the rays of the $\alpha$ hexagon.
If we extend the lines of the $\beta$ hexagon to infinity in Figure~\ref{fig:genus2planes},
they separate all the rays, including the one ray of the $\alpha$ hexagon.

  Lastly, we have all the infinite rays that branch off of another ray.
Let us first consider those rays in the direction $(-1,0,0)$.
Of course, none of these rays will intersect each other because they are parallel.

  Neither will they intersect the rays in the direction $(0,-1,0)$
since every ray in the direction $(-1,0,0)$ lies on one side of the plane $x = y$ and
every ray in the direction $(0,-1,0)$ on the other.

  Nor will they intersect the hexagons or the rays coming off of the hexagons
which we can see by examining the position of each of the rays with respect to the planes
$x = y$, $z = 0$ or $x + y = 0$. Figure~\ref{fig:genus2planes} gives some insight to this.

  For example, at $\alpha_2$, the ray in the $(-1,0,0)$ direction has $z > 0$ and $x \le y$.
So it will not intersect anything with $z \le 0$, nor anything with $x > y$, and
it only intersects the plane $x = y$ at one point.
This excludes everything.
In fact, by checking each ray,
we see that these three planes ($x = y$, $z = 0$ and $x + y = 0$) are enough to separate each ray.

  The rays in the direction $(0,-1,0)$ are just the mirror image of those in the direction $(-1,0,0)$
after reflecting in the $x = y$ plane.
So anything we said about the $(-1,0,0)$-rays holds for the $(0,-1,0)$ rays.

  The story is the similar for the rays in the direction $(1,1,1)$ and $(0,0,-1)$.
For example, rays in the direction $(1,1,1)$ all start with $z \ge 0$ and
rays in the direction $(0,0,-1)$ all start with $z \le 0$.
So these types of rays don't intersect each other, nor the hexagons,
nor the rays coming directly off of the hexagons.

  Finally, there is no intersection between infinite rays in any direction
as we can see by checking the position with respect to various planes at each ray.
Namely, the planes $x = y$, $z = 0$, $x + y = 0$ work.
\end{proof}

\begin{figure}[htbp]
  \centering
  \tdplotsetmaincoords{60}{20}
  \begin{tikzpicture}[tdplot_main_coords,scale=1.2]
    \draw (0,0,0) -- (0,0,1/5) -- (0,1/5,2/5) -- (0,1/5,3/5) -- (0,0,4/5) -- (0,0,1) -- (-1,-1,1) -- (-2,-2,0) -- (-2,-2,-1) -- (-1,-1,-1) -- (0,0,0);
    \draw (0,0,0) -- (1,0,0) -- (6/5,1/5,0) -- (7/5,2/5,1/5) -- (8/5,3/5,1/5) -- (9/5,4/5,0) -- (2,1,0) -- (2,2,0) -- (1,2,0) -- (0,1,0) -- (0,0,0);
    \draw (0,0,1/5) -- (0,-2,1/5);
    \draw (0,0,4/5) -- (0,-2,4/5);
    \draw (0,1/5,2/5) -- (0,2.5,2/5);
      \draw (0,2.5,2/5) -- (0,2.5,-1);
      \draw (0,2.5,2/5) -- (-1,2.5,2/5);
      \draw (0,2.5,2/5) -- (1,3.5,7/5);
    \draw (0,1/5,3/5) -- (0,3,3/5);
      \draw (0,3,3/5) -- (0,3,-1);
      \draw (0,3,3/5) -- (-1,3,3/5);
      \draw (0,3,3/5) -- (1,4,8/5);
    \draw (6/5,1/5,0) -- (6/5,1/5,-1);
    \draw (9/5,4/5,0) -- (9/5,4/5,-1);
    \draw (7/5,2/5,1/5) -- (7/5,2/5,3/5);
      \draw (7/5,2/5,3/5) -- (1,2/5,3/5);
      \draw (7/5,2/5,3/5) -- (7/5,0,3/5);
      \draw (7/5,2/5,3/5) -- (9/5,3/5,1);
    \draw (8/5,3/5,1/5) -- (8/5,3/5,3/5);
      \draw (8/5,3/5,3/5) -- (6/5,3/5,3/5);
      \draw (8/5,3/5,3/5) -- (8/5,1/5,3/5);
      \draw (8/5,3/5,3/5) -- (2,1,1);
  \end{tikzpicture}
  \caption{Position of the new rays added from the rough draft.}
  \label{fig:genus2all}
\end{figure}
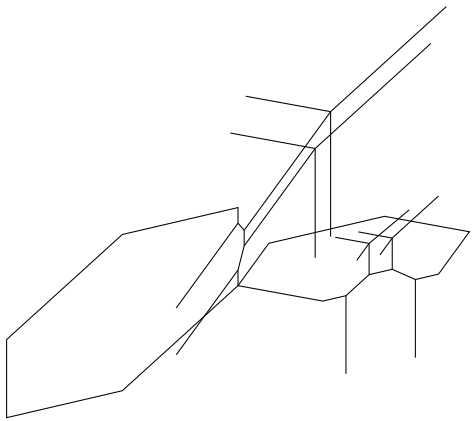

Now let us look at the construction in Table~\ref{tab:listofdivisors}
 which has some extra bits added to it, pictured in Figure~\ref{fig:genus2all}.
The bumps at $\delta_1,\dots,\delta_4$ and $\gamma_1,\dots,\gamma_4$ are small enough
that they should not impact injectivity.
But we can also make the bumps arbitrarily small if
we are concerned by decreasing the distances between $\delta_1$ and $\delta_2$ and
between $\gamma_1$ and $\gamma_2$.

\begin{figure}[htbp]
  \centering
  \tdplotsetmaincoords{90}{90} 
  \begin{tikzpicture}[tdplot_main_coords]
    \draw (0,0,0) -- (0,0,1/5) -- (0,1/5,2/5) -- (0,1/5,3/5) -- (0,0,4/5) -- (0,0,1) -- (-1,-1,1) -- (-2,-2,0) -- (-2,-2,-1) -- (-1,-1,-1) -- (0,0,0);
    \draw (0,0,0) -- (1,0,0) -- (6/5,1/5,0) -- (7/5,2/5,1/5) -- (8/5,3/5,1/5) -- (9/5,4/5,0) -- (2,1,0) -- (2,2,0) -- (1,2,0) -- (0,1,0) -- (0,0,0);
    \draw (0,0,1) -- (1,1,2);
    \draw (-1,-1,1) -- (-1,-1,2);
    \draw (-2,-2,0) -- (-3,-3,0);
    \draw (-2,-2,-1) -- (-3,-3,-2);
    \draw (-1,-1,-1) -- (-1,-1,-2);
    \draw (1,0,0) -- (1,-1,0);
    \draw (2,1,0) -- (3,1,0);
    \draw (2,2,0) -- (3,3,0);
    \draw (1,2,0) -- (1,3,0);
    \draw (0,1,0) -- (-1,1,0);
    \draw (0,0,1/5) -- (0,-1,1/5);
    \draw (0,1/5,2/5) -- (0,6/5,2/5);
    \draw (0,1/5,3/5) -- (0,6/5,3/5);
    \draw (0,0,4/5) -- (0,-1,4/5);
    \draw (6/5,1/5,0) -- (6/5,1/5,-1);
    \draw (7/5,2/5,1/5) -- (7/5,2/5,6/5);
    \draw (8/5,3/5,1/5) -- (8/5,3/5,6/5);
    \draw (9/5,4/5,0) -- (9/5,4/5,-1);
    \foreach \x/\y/\z in { 1.4/0.4/1.2, 1.6/0.6/1.2}
      \filldraw (\x,\y,\z) circle (0.5 mm);
  \end{tikzpicture}
  \vrule
  \tdplotsetmaincoords{90}{0} 
  \begin{tikzpicture}[tdplot_main_coords]
    \draw (0,0,0) -- (0,0,1/5) -- (0,1/5,2/5) -- (0,1/5,3/5) -- (0,0,4/5) -- (0,0,1) -- (-1,-1,1) -- (-2,-2,0) -- (-2,-2,-1) -- (-1,-1,-1) -- (0,0,0);
    \draw (0,0,0) -- (1,0,0) -- (6/5,1/5,0) -- (7/5,2/5,1/5) -- (8/5,3/5,1/5) -- (9/5,4/5,0) -- (2,1,0) -- (2,2,0) -- (1,2,0) -- (0,1,0) -- (0,0,0);
    \draw (0,0,1) -- (1,1,2);
    \draw (-1,-1,1) -- (-1,-1,2);
    \draw (-2,-2,0) -- (-3,-3,0);
    \draw (-2,-2,-1) -- (-3,-3,-2);
    \draw (-1,-1,-1) -- (-1,-1,-2);
    \draw (1,0,0) -- (1,-1,0);
    \draw (2,1,0) -- (3,1,0);
    \draw (2,2,0) -- (3,3,0);
    \draw (1,2,0) -- (1,3,0);
    \draw (0,1,0) -- (-1,1,0);
    \draw (0,0,1/5) -- (0,-1,1/5);
    \draw (0,1/5,2/5) -- (0,6/5,2/5);
    \draw (0,1/5,3/5) -- (0,6/5,3/5);
    \draw (0,0,4/5) -- (0,-1,4/5);
    \draw (6/5,1/5,0) -- (6/5,1/5,-1);
    \draw (7/5,2/5,1/5) -- (7/5,2/5,6/5);
    \draw (8/5,3/5,1/5) -- (8/5,3/5,6/5);
    \draw (9/5,4/5,0) -- (9/5,4/5,-1);
    \foreach \x/\y/\z in { 1.4/0.4/1.2, 1.6/0.6/1.2}
      \filldraw (\x,\y,\z) circle (0.5 mm);
  \end{tikzpicture}
  \caption{Where the rays in $-x$ and $-y$ direction originate at $\gamma_2, \gamma_3$. Projection onto the $x = 0$ and $y = 0$ planes.}
  \label{fig:genus2xrays}
\end{figure}
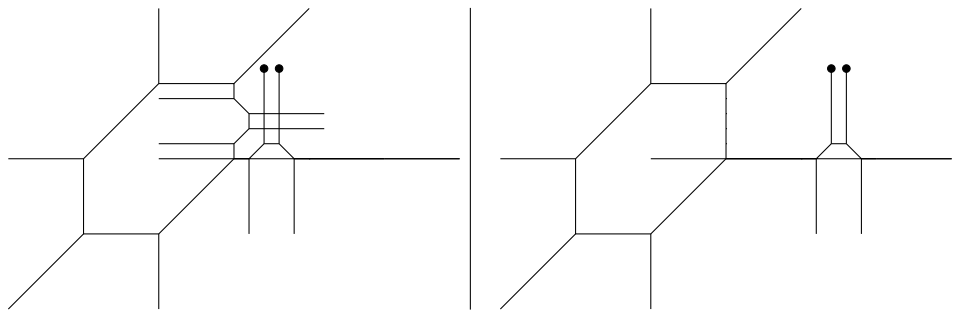

The idea, which one can see in Figure~\ref{fig:genus2xrays},
is that there is some compact set (possibly even finite) of lengths
that would cause an intersection and outside which, all other lengths work.

\begin{proposition} \label{prop:genus2bumps}
We can choose the lengths $\ell(\delta_2), \ell(\delta_3), \ell(\gamma_2), \ell(\gamma_3)$
to get an injective embedding of our curve.
\end{proposition}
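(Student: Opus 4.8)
The plan is to regard the image $\trop_\varphi(\Sigma(\varphi))$ of $\varphi=(f_1,f_2,f_3)\colon X\to(\Proj^1)^3$ as a controlled modification of the rough-draft tropical curve of Figure~\ref{fig:genus2roughDraft}, which the preceding proposition shows is crossing-free --- and whose non-crossing was certified there by exhibiting, for any two disjoint pieces, one of the planes $\{x=y\}$, $\{z=0\}$, $\{x+y=0\}$ separating them by a \emph{strict} inequality. The actual curve differs from the rough draft in exactly two ways. First, near $\gamma_2,\gamma_3$ the edge $\omega\alpha_1$ of the $\alpha$-hexagon acquires a small bump of $F_2$, and near $\delta_2,\delta_3$ the edge $\beta_1\beta_2$ of the $\beta$-hexagon a small bump of $F_3$; the size of each bump is bounded by $\dist(\gamma_1,\gamma_2)$, resp.\ $\dist(\delta_1,\delta_2)$, and so can be made as small as we wish. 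Second, at each of $\gamma_2,\gamma_3,\delta_2,\delta_3$ we attach a common initial segment of length $\ell(\cdot)$ followed by a ``claw'' of three infinite rays, all of whose directions lie in $\{(-1,0,0),(0,-1,0),(0,0,-1),(1,1,1)\}$ by the choice of lifts in Table~\ref{tab:listofdivisors} --- the ``injective at infinity'' property. So it is enough to check (a) the small bumps do not spoil the injectivity inherited from the rough draft, and (b) the four claws can be placed disjointly from the rest of the picture and from one another by a suitable choice of $\ell(\gamma_2),\ell(\gamma_3),\ell(\delta_2),\ell(\delta_3)$.

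For (a), I would shrink $\dist(\gamma_1,\gamma_2)$ and $\dist(\delta_1,\delta_2)$ so that the bumps are confined to arbitrarily small neighbourhoods of $\gamma_2,\gamma_3,\delta_2,\delta_3$, hence in particular to arbitrarily small neighbourhoods of the hexagon planes $\{x=y\}$ and $\{z=0\}$. A sufficiently small perturbation keeps every strict separation of the rough draft valid, so the bumped hexagons stay disjoint from each other and from every ray of the rough draft; that each bumped hexagon is itself still embedded is the same local verification as in the previous proposition, now performed on a small ball around each of the four sites.

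For (b), I would run through the possible intersections of the four claws with everything else --- and with one another --- as in the rough-draft proof, now carrying the parameters $\ell(\gamma_2),\ell(\gamma_3),\ell(\delta_2),\ell(\delta_3)$ along. Each common segment lies on a fixed ray issuing from its bump site in a fixed direction $w$ (read off from the $F_i$), and the three rays of the attached claw point in the four distinguished directions; arguing with the separating planes $\{x=y\},\{z=0\},\{x+y=0\}$ as before, one finds that each potential intersection either cannot occur for \emph{any} parameter values --- because two parallel rays are kept non-colinear, typically by the distinctness of the $z$-coordinates of the four bump sites (which holds since $F_3$ is non-constant along $\omega\alpha_1$ and $\beta_1\beta_2$), or else by an explicit distinctness hypothesis on the values of $F_1,F_2,F_3$ at those sites in the spirit of Section~\ref{sec:distinctness} --- or occurs only for $(\ell(\gamma_2),\ell(\gamma_3),\ell(\delta_2),\ell(\delta_3))$ in a proper closed subset of the parameter space, a finite union of affine hyperplanes and a bounded set. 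Since $\Lambda$ is dense and unbounded, there is a $\Lambda$-rational choice of the four lengths avoiding that subset, and for such a choice $\trop_\varphi$ is injective on $\Sigma(\varphi)$. Together with the fact that the gcd over $i$ of the slopes of $\log|f_i|$ equals $1$ on every edge and infinite ray --- read off from Table~\ref{tab:listofdivisors} as in Section~\ref{sec:ff-and-smooth} --- and with the common-segment construction forcing smoothness, this completes the verification that the construction is fully faithful and smooth.

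The main obstacle will be the case analysis in (b): one must ensure that no combinatorial coincidence forces a moving ray to run along a fixed ray for arbitrarily large values of some $\ell(\cdot)$, which is exactly where the ``injective at infinity'' direction set, the three separating planes of the rough-draft proof, and the distinctness conditions of Section~\ref{sec:distinctness} all have to be used together. Keeping the bump-shrinking of (a) compatible with the length choices of (b) is routine, since the bump sizes and the common-segment lengths are independent parameters.
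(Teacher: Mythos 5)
Your proposal follows essentially the same route as the paper's proof: the rough draft is already crossing-free, the $\gamma$- and $\delta$-bumps can be made harmlessly small, and the common-segment lengths need only avoid a bad set of parameter values, which the paper likewise describes as ``a bounded set of lengths'' plus the interactions among the new claws themselves. The one place your description of the bad set is too optimistic is exactly the claw-versus-claw case you flag as the main obstacle: the paper finds that the $-z$ ray at $\delta_3$ meets the finite segment at $\delta_2$ precisely when $\ell(\delta_3) \le \ell(\delta_2)$ --- an unbounded, full-dimensional region rather than a hyperplane or a bounded set --- and resolves it by simply imposing the orderings $\ell(\delta_2) < \ell(\delta_3)$ and $\ell(\gamma_3) < \ell(\gamma_2)$; since the complement of such a region still contains $\Lambda$-rational points, your conclusion survives, but the structure of the bad set is not quite the one you state.
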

\begin{proof}
First, project onto the plane $z = 0$.
Here you can see that the infinite rays at $\delta_1, \delta_4$ do not intersect any part of the rough draft.

Similarly, in the projection onto $x = 0$,
we can see that the rays at $\gamma_1, \gamma_4$ do not intersect any part of the rough draft.

Now, consider the finite rays at $\delta_2, \delta_3$.
These point in the $+y$ direction and, in fact, all other rays that point in the $+y$ direction are finite.
Meaning if $\ell(\delta_2)$ and $\ell(\delta_3)$ are large enough,
then there are no more rays parallel to the $\delta_2$ and $\delta_3$ rays.

  Therefore, after a certain threshold, when we start three infinite rays in the directions
$(-1,0,0), (0,0,-1)$ and $(1,1,1)$,
there are only finitely many lengths that would cause an intersection with any part of the rough draft.

  On the other hand, the ray at $\delta_3$ going in the $-z$ direction will intersect the finite ray at
$\delta_2$ if $\ell(\delta_3) \le \ell(\delta_2)$. If we assert that $\ell(\delta_2) < \ell(\delta_3)$,
there are no issues.

  For the rays at $\gamma_2, \gamma_3$, it is the same picture:
a bounded set of lengths that would cause an intersection,
afterwards the only issue is that the ray in the $(1,1,1)$ direction at $\gamma_2$
might intersect the finite ray at $\gamma_3$. So again, we assert that $\ell(\gamma_3) < \ell(\gamma_2)$.

By construction, the infinite rays have directions $(-1,0,0)$, $(0,-1,0)$, $(0,0,-1)$ or $(1,1,1)$.
It it easy to see that rays in these directions intersect at infinity in $\mb{TP}^3$ if and only if
they intersect in $\R^3$.
\end{proof}

Proposition~\ref{prop:genus2bumps} is the hard part.
Afterwards, smoothness and fully-faithfulness come for free from how we constructed the rough draft.

\begin{proposition}
If we choose the lengths $\ell(\delta_2), \ell(\delta_3), \ell(\gamma_2), \ell(\gamma_3)$
such that the tropicalization is injective it is also smooth and fully faithful.
\end{proposition}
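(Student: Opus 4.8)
The plan is to reduce both assertions to the criteria of \S\ref{sec:ff-and-smooth} and then verify them, exploiting the fact that they are insensitive to the lengths that we are free to choose. Recall from there that $\trop_{\varphi}$ is fully faithful as soon as $\trop_{\varphi}|_{\Sigma(\varphi)}$ is injective --- which is exactly the hypothesis --- and every edge of $\Sigma(\varphi)$ has stretching factor $1$; and that it is moreover a smooth tropicalization once, in addition, at every finite vertex $x$ of $\Sigma(\varphi)$ the primitive integral direction vectors of the edges at $\trop_{\varphi}(x)$ span a saturated sublattice of $\Z^{3}$ of rank $\deg(x)-1$. The observation that makes this short is that the direction of an edge is the integer slope vector $(\text{slope }F_{1},\text{slope }F_{2},\text{slope }F_{3})$, and each $F_{i}$ is determined up to an additive constant by the divisorial data of Table~\ref{tab:listofdivisors}; hence all edge directions --- and with them the stretching factors and the local lattice condition --- are fixed independently of the lengths $\ell(\delta_{2}),\ell(\delta_{3}),\ell(\gamma_{2}),\ell(\gamma_{3})$, which move vertices and rays around in $\R^{3}$ but change no slope.

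First I would record the stretching factors. On every infinite ray the direction is, by construction, one of $(-1,0,0),(0,-1,0),(0,0,-1),(1,1,1)$ --- this is precisely what choosing several lifts $x_{i,j},y_{i,j},u_{i,j},v_{i,j}$ of the same point accomplished --- and each of these is a primitive lattice vector, so the factor there is $1$. On a finite edge --- an edge of one of the two hexagons, a bump segment near some $\gamma_{i}$ or $\delta_{i}$, or a common initial segment as in Figure~\ref{fig:genus2commonedge} --- one reads off directly from the relevant divisor that some $F_{i}$ has slope $\pm1$ on it, so again the factor is $1$. Together with the hypothesis, this already gives full faithfulness.

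Next I would check the local lattice condition by running through the finite vertices of $\Sigma(\varphi)$. The central vertex $\omega$ has degree $4$, and its four incident directions include $e_{1},e_{2},e_{3}$, so they span $\Z^{3}$, a saturated lattice of rank $3=\deg(\omega)-1$. At a branch vertex $p^{*}$ --- the point at which the cluster of infinite rays over a marked point detaches --- the incident edges are one finite segment together with the two or three rays, whose directions lie in $\{(1,1,1),-e_{1},-e_{2},-e_{3}\}$ and are determined by which of $f_{1},f_{2},f_{3}$ has each lift in its support; a single $3\times 3$ determinant (when $\deg p^{*}=4$) or $2\times 2$ minor (when $\deg p^{*}=3$) shows these span $\Z^{3}$, respectively a saturated coordinate plane, of the required rank. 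At every remaining finite vertex $x$ --- a hexagon vertex or a bump vertex, of degree at most $3$ --- one of the functions $F_{1}-F_{2}$, $F_{1}$, or $F_{3}$ is constant along every edge at $x$: this is exactly what the rough draft of Figure~\ref{fig:genus2roughDraft} was engineered to ensure, with the two hexagons placed in the planes $\{x=y\}$ and $\{z=0\}$, the common segments kept inside those planes, and the $\gamma$- and $\delta$-bumps keeping $F_{1}$, resp.\ $F_{1}-F_{2}$, flat. The incident directions then all lie in the kernel of the corresponding primitive functional --- one of $\{x=y\}$, $\{x=0\}$, $\{z=0\}$ --- a saturated rank-$2$ sublattice of $\Z^{3}$, which the two or three directions present span. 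In every case the span is saturated of rank $\deg(x)-1$, so $\trop_{\varphi}$ is smooth.

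As the authors note, the real work lies already in Proposition~\ref{prop:genus2bumps}; here the only genuine hazard is bookkeeping. One must make sure the list of finite vertices is complete --- in particular not overlooking the branch vertices $p^{*}$ coming from Figure~\ref{fig:genus2commonedge} nor the interior vertices produced by the bumps at the $\gamma_{i},\delta_{i}$ --- and one must correctly identify, at each such vertex, which functional is constant (or which rays occur) and with the right signs, since a single misidentified direction could spoil the saturation demanded for smoothness. Once the table of edge directions is in hand, every remaining verification is a span or a small determinant.
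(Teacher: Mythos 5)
Your proposal is correct and follows essentially the same route as the paper: full faithfulness from the injectivity hypothesis plus the observation that every edge has gcd of slopes (stretching factor) equal to $1$ by construction, and smoothness by running through the finite vertices and checking that the incident primitive directions span a saturated lattice of rank $\deg(x)-1$ (the paper works out $\alpha_1$ and $v$ explicitly and leaves the rest as "similar"; you organize the remaining cases a bit more systematically, but the argument is the same).
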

\begin{proof}
  Since the map is injective, and along each edge the gcd of the slopes of the functions
$F_1, F_2, F_3$ is $1$ (by construction), thus the weight of every edge is $1$. Therefore, the map is fully-faithful.

For smoothness (which is also by construction), we simply have to check all the vertices.
For example, at $\alpha_1$ the outgoing directions are, according to Table \ref{tab:listofdivisors},
\begin{align*}
(1,1,1) \text{ along the ray towards infinity}, \\
(0,0,-1) \text{ along the ray towards } v, \\
(-1,-1,0) \text{ along the ray towards } \alpha_2.
\end{align*}
The lattice spanned by these three rays is $\{ (x,y, z) \in \Z^3 \mid x-y = 0 \}$.
This is clearly of rank $2$ and saturated.

At $v$, the rays are
\begin{align*}
(0,0,1) \text{ along the ray towards } \gamma_1, \\
(0,1,0) \text{ along the ray towards } \beta_1, \\
(1,0,0) \text{ along the ray towards } \beta_5, \\
(-1,-1,-1) \text{ along the ray towards } \alpha_5.
\end{align*}
The lattice spanned here is $\Z^3$.

All other vertices can be checked similarly.
Therefore, the tropicalization is smooth.
\end{proof}

\bibliography{Effective-Fully-Faithful}{}

\begin{thebibliography}{CDMY16}
\expandafter\ifx\csname url\endcsname\relax
  \def\url#1{\texttt{#1}}\fi
\expandafter\ifx\csname doi\endcsname\relax
  \def\doi#1{\burlalt{doi:#1}{http://dx.doi.org/#1}}\fi
\expandafter\ifx\csname urlprefix\endcsname\relax\def\urlprefix{URL }\fi
\expandafter\ifx\csname href\endcsname\relax
  \def\href#1#2{#2}\fi
\expandafter\ifx\csname burlalt\endcsname\relax
  \def\burlalt#1#2{\href{#2}{#1}}\fi

\bibitem[ABBR15]{ABBR}
Omid Amini, Matthew Baker, Erwan Brugallé, and Joseph Rabinoff.
\newblock {Lifting harmonic morphisms I: metrized complexes and Berkovich
  skeleta}.
\newblock {\em Research in the Mathematical Sciences}, 2(1):7, Jun 2015.
\newblock \doi{10.1186/s40687-014-0019-0}.

\bibitem[ABKS14]{ABKS}
Yang An, Matthew Baker, Greg Kuperberg, and Farbod Shokrieh.
\newblock Canonical representatives for divisor classes on tropical curves and
  the matrix-tree theorem.
\newblock {\em Forum Math. Sigma}, 2:e24, 25, 2014.
\newblock \doi{10.1017/fms.2014.25}.

\bibitem[Bak08]{Baker}
Matthew Baker.
\newblock Specialization of linear systems from curves to graphs.
\newblock {\em Algebra Number Theory}, 2(6):613--653, 2008.
\newblock \doi{10.2140/ant.2008.2.613}.
\newblock With an appendix by Brian Conrad.

\bibitem[Ber90]{Berk}
Vladimir~G. Berkovich.
\newblock {\em Spectral theory and analytic geometry over non-{A}rchimedean
  fields}, volume~33 of {\em Mathematical Surveys and Monographs}.
\newblock American Mathematical Society, Providence, RI, 1990.

\bibitem[BPR13]{BPR13}
Matthew Baker, Sam Payne, and Joseph Rabinoff.
\newblock On the structure of non-{A}rchimedean analytic curves.
\newblock In {\em Tropical and non-{A}rchimedean geometry}, volume 605 of {\em
  Contemp. Math.}, pages 93--121. Amer. Math. Soc., Providence, RI, 2013.
\newblock \doi{10.1090/conm/605/12113}.

\bibitem[BPR16]{BPR16}
Matthew Baker, Sam Payne, and Joseph Rabinoff.
\newblock Nonarchimedean geometry, trop\-ic\-al\-iz\-a\-tion, and metrics on
  curves.
\newblock {\em Algebr. Geom.}, 3(1):63--105, 2016.
\newblock \doi{10.14231/AG-2016-004}.

\bibitem[BR15]{BR}
Matthew Baker and Joseph Rabinoff.
\newblock The skeleton of the {J}acobian, the {J}acobian of the skeleton, and
  lifting meromorphic functions from tropical to algebraic curves.
\newblock {\em Int. Math. Res. Not. IMRN}, 2015(16):7436--7472, 2015.
\newblock \doi{10.1093/imrn/rnu168}.

\bibitem[CDMY16]{CDMY}
Dustin Cartwright, Andrew Dudzik, Madhusudan Manjunath, and Yuan Yao.
\newblock Embeddings and immersions of tropical curves.
\newblock {\em Collect. Math.}, 67(1):1--19, 2016.
\newblock \doi{10.1007/s13348-015-0149-8}.

\bibitem[CFPU16]{CFPU}
Man-Wai Cheung, Lorenzo Fantini, Jennifer Park, and Martin Ulirsch.
\newblock Faithful realizability of tropical curves.
\newblock {\em Int. Math. Res. Not. IMRN}, 2016(15):4706--4727, 2016.
\newblock \doi{10.1093/imrn/rnv269}.

\bibitem[Har77]{Har}
Robin Hartshorne.
\newblock {\em Algebraic geometry}.
\newblock Springer-Verlag, New York-Heidelberg, 1977.
\newblock Graduate Texts in Mathematics, No. 52.

\bibitem[Jel20]{J}
Philipp Jell.
\newblock Constructing smooth and fully faithful tropicalizations for {M}umford
  curves.
\newblock {\em Selecta Math. (N.S.)}, 26(4):Paper No. 60, 23, 2020.
\newblock \doi{10.1007/s00029-020-00586-2}.

\bibitem[MZ08]{MZ}
Grigory Mikhalkin and Ilia Zharkov.
\newblock Tropical curves, their {J}acobians and theta functions.
\newblock In {\em {Curves and Abelian Varieties}}, volume 465 of {\em Contemp.
  Math.}, pages 203--230. Amer. Math. Soc., Providence, RI, 2008.
\newblock \doi{10.1090/conm/465/09104}.

\bibitem[Pay09]{Payne}
Sam Payne.
\newblock Analytification is the limit of all tropicalizations.
\newblock {\em Math. Res. Lett.}, 16(3):543--556, 2009.
\newblock \doi{10.4310/MRL.2009.v16.n3.a13}.

\bibitem[Thu05]{Thu}
Amaury Thuillier.
\newblock {\em {Théorie du potentiel sur les courbes en géométrie analytique
  non archimédienne. Applications à la théorie d'Arakelov.}}
\newblock Thèse, {Universit{\'e} Rennes 1}, October 2005.
\newblock \urlprefix\url{https://tel.archives-ouvertes.fr/tel-00010990}.

\bibitem[Wag17]{Wag}
Till Wagner.
\newblock Faithful tropicalization of {Mumford} curves of genus two.
\newblock {\em Beitr{\"a}ge zur Algebra und Geometrie / Contributions to
  Algebra and Geometry}, 58(1):47--67, Mar 2017.
\newblock \doi{10.1007/s13366-015-0272-4}.

\end{thebibliography}
\bibliographystyle{halpha}
\end{document}